\documentclass[10.5pt,reqno,a4paper]{amsart}

\usepackage{floatrow}
\usepackage{caption}
 \usepackage[foot]{amsaddr}
 \setcounter{tocdepth}{3}
 
\let\oldtocsection=\tocsection
 
\let\oldtocsubsection=\tocsubsection 
 
\let\oldtocsubsubsection=\tocsubsubsection

\renewcommand{\tocsection}[2]{\vspace{0.5em}\hspace{0em}\oldtocsection{#1}{#2}}
\renewcommand{\tocsubsection}[2]{\vspace{0.5em}\hspace{1em}\oldtocsubsection{#1}{#2}}
\renewcommand{\tocsubsubsection}[2]{\vspace{0.5em}\hspace{2em}\oldtocsubsubsection{#1}{#2}}
\usepackage{graphicx,cancel,xcolor,hyperref,comment,graphicx,geometry}
 
\usepackage{amsopn}

\usepackage{tikz}
\usepackage{graphicx,url,etoolbox}
\usepackage{lipsum}
 \usepackage{amssymb}
 \newcommand{\isEquivTo}[1]{\underset{#1}{\sim}}
\usepackage{graphicx,cancel,xcolor,hyperref,comment,graphicx,geometry}

\usepackage{tikz}
\usetikzlibrary{decorations.pathreplacing}
\usepackage{graphicx,url,etoolbox}
\usepackage{lipsum}

\setlength{\hoffset}{-18pt}
\setlength{\oddsidemargin}{15pt}  
\setlength{\evensidemargin}{15pt}  
\setlength{\marginparwidth}{54pt}  
\setlength{\textwidth}{481pt}  
\setlength{\voffset}{-18pt}  
\setlength{\marginparsep}{7pt}  
\setlength{\topmargin}{0pt}  
\setlength{\headheight}{10pt}  
\setlength{\headsep}{10pt}  
\setlength{\footskip}{27pt}  
\setlength{\textheight}{650pt}  
 
\usepackage{fancyhdr}
\pagestyle{fancy}

\fancyhead[C]{} 
\fancyhead[L]{{\leftmark}}
\fancyhead[R]{}

\usepackage{lastpage}

\newtheorem{theoreme}{Theorem}

\newtheorem{rem}[theoreme]{Remark}
\theoremstyle{definition}

\usepackage{graphicx}
\usepackage{subcaption}

\numberwithin{equation}{section}

 \renewenvironment{proof}{{\bfseries \noindent Proof.}}{\demo}
\newcommand\xqed[1]{%
  \leavevmode\unskip\penalty9999 \hbox{}\nobreak\hfill
  \quad\hbox{#1}}
\newcommand\demo{\xqed{$\square$}}

\hypersetup{bookmarks, colorlinks, urlcolor=blue, citecolor=blue, linkcolor=blue, hyperfigures, pagebackref,
    pdfcreator=LaTeX, breaklinks=true, pdfpagelayout=SinglePage, bookmarksopen=true,bookmarksopenlevel=2}

\newcommand{\nonamethmname}{}

\newenvironment{genthm*}[1]
 {\renewcommand{\nonamethmname}{#1}\nonamethmcheck}
 {\endnonamethm}

\newcommand\nonamethmcheck[1][]{%
  \if\relax\detokenize{#1}\relax
    \nonamethm\relax
  \else
    \nonamethm[#1]%
  \fi
  \mbox{}%
}

\usepackage{comment}
\def\R{\mathbb R}

\def\la {{\lambda}}

\newcommand {\nc}   {\newcommand}
\nc {\be}   {\begin{equation}} \nc {\ee}   {\end{equation}} \nc
{\beq}  {\begin{eqnarray}} \nc {\eeq}  {\end{eqnarray}} \nc {\beqs}
{\begin{eqnarray*}} \nc {\eeqs} {\end{eqnarray*}}
\def\edc{\end{document}}

\providecommand{\abs}[1]{\lvert#1\rvert}
   
\usepackage{tikz}
\usetikzlibrary{decorations.pathmorphing,patterns,scopes,intersections,calc}
\usepackage{caption}
\usepackage{float}
\usepackage{soul}

\newcounter{dummy} 
\numberwithin{dummy}{section}
\newtheorem{Theorem}[dummy]{Theorem}

\newtheorem{Lemma}[dummy]{Lemma}

\newtheorem{Proposition}[dummy]{Proposition}
\newtheorem{Remark}[dummy]{Remark}

\numberwithin{equation}{section}

\newtheoremstyle{break}
  {\topsep}{\topsep}%
   {\itshape}{}%
  {\bfseries}{}%
  {\newline}{}%
\theoremstyle{break}


\begin{document}
\title[\fontsize{7}{9}\selectfont  ]{Asymptotic Behavior of a transmission  Heat/Piezoelectric smart material with internal fractional dissipation law}
 \author{Ibtissam Issa$^1$ , Abdelaziz Soufyane$^2$ , Octavio Vera Villagran$^3$ }
 
 \address{$^1$ Universit\'a  degli studi di Bari Aldo Moro-Italy, Dipartimento di Matematica, Via E. Orabona 4, 70125 Bari - Italy.}

\address{$^2 $ 	Department of Mathematics, College of Science, University of Sharjah, P.O.Box 27272, Ash Shariqah,  UAE }

\address{$^3$ Departamento de Matemática, Universidad de Tarapaca, Arica, Chile }

\email{ibtissam.issa@uniba.it, asoufyane@sharjah.ac.ae, opverav@academicos.uta.cl }

\keywords{Piezoelectric beam, Heat, Fractional derivative, Polynomial Stability, Semigroup Theory}

\begin{abstract}
In this paper, we examine the stability of a heat-conducting copper rod and a magnetizable piezoelectric beam, with fractional damping affecting the longitudinal displacement of the piezoelectric material's centerline. We establish a polynomial stability result that is dependent on the order of the fractional derivative.
\end{abstract}

\maketitle
\pagenumbering{roman}
\maketitle
\pagenumbering{arabic}
\setcounter{page}{1}
\section{Introduction}
Piezoelectric materials have emerged as a prominent mechanism for vibrational-to-electrical energy transduction, alongside electromagnetic and electrostatic methods  \cite{Williams1996}. Over the last decade, piezoelectric energy harvesting has gained significant attention in numerous review articles due to its high power densities and ease of application \cite{Adhikari2009}.\\

Piezoelectric materials serve as a sustainable source of electricity \cite{article2017}, extracted from various renewable sources such as oceanic wave energy \cite{Wu2015} and wind energy \cite{Oh2010}. These materials find implementation in various devices like microphones, load cells, and power excavators, showcasing excellent properties such as greater energy density, natural inverse energy conversion ability, and simplicity in design and construction.\\

Since the nineteenth century, materials like quartz, Rochelle salt, barium titanate, and Polyvinylidene Fluoride (PVDF) have been known to exhibit the direct piezoelectric effect, generating electric charge/voltage under pressure. The Curie brothers discovered this phenomena in 1880. Gabriel Lippmann discovered the converse piezoelectric effect in 1881, which is observable when these materials undergo an electric field and produce proportional geometric tension. Piezoelectric materials with multifunctional features include lead titanate, barium titanate, and lead zirconate titanate (PZT). According to \cite{Smith2005}, these materials have the ability to produce electric displacement that is directly proportionate to the applied mechanical stress. The driving frequency affects actuators made of these materials, such as those that need electrical input \cite{ozer2023maximal}, which controls how quickly a piezoelectric beam vibrates or changes states. These materials are used in energy harvesters, sensors, actuators, and other devices \cite{Baur2014}.\\
Because of its special ability to transform mechanical energy into electrical and magnetic energy and vice versa, piezoelectric actuators are incredibly versatile and can be used as both actuators and sensors. They compete with traditional actuators for many industrial jobs, especially those involving structural control, because they are generally more efficient, smaller, less costly, more scalable than traditional actuators. Applications for piezoelectric materials can be found in aerospace, automotive, industrial, and civil structures.\\

In the modeling of piezoelectric systems, three major effects—mechanical, electrical, and magnetic—need consideration, and their interrelations must be carefully addressed. Mechanical effects are typically modeled through Kirchhoff, Euler-Bernoulli, or Mindlin-Timoshenko small displacement assumptions (see, for instance, \cite{Banks1996,Smith2005, Yang2004AnIT} and references therein). To include electrical and magnetic effects, three main approaches, electrostatic, quasi-static, and fully dynamic are employed \cite{Tiersten1969}. While electrostatic and quasi-static approaches are widely used (see, for instance, \cite{Destuynder1992, Kapitonov2007, LASIECKA2009167, Smith2005}), these models completely exclude magnetic effects and their coupling with electrical and mechanical effects. In the electrostatic approach, electrical effects are stationary despite dynamic mechanical equations, and in the quasi-static approach, magnetic effects are still ignored, but electric charges exhibit time dependence. The electromechanical coupling is not dynamic.\\

A piezoelectric beam, characterized as an elastic beam with electrodes at its top and bottom surfaces, insulated edges (to prevent fringing effects), and connection to an external electric circuit, serves as a fundamental structure for studying the interaction between electrical and mechanical energy. Experimental observations suggest that magnetic effects are minor in the overall dynamics for polarized ceramics (see \cite{Yang2006}), leading to their exclusion in piezoelectric beam models. When subjected to a voltage source, a single piezoelectric beam either shrinks or extends.\\
For a beam of length $L$ and thickness $h$, classical models with an undamped Euler-Bernoulli beam model and the electrostatic or quasi-static assumptions describe the stretching motion as
\begin{equation}\label{Sys1}
\begin{array}{l}
\rho v_{tt}-\alpha_1 v_{xx}=0\quad (x,t)\in (0,L)\times \mathbb{R}^{+}\\
v(0,t)=0, \alpha_1v_x(L,t)=-\frac{\gamma V(t)}{h}, \quad t\in \mathbb{R}^{+}
\end{array}
\end{equation}
where $\rho, \alpha_1, \gamma$ denote mass density, elastic stiffness, and piezoelectric coefficients of the beam, respectively, $V(t)$ denotes the voltage applied at the electrodes, and $v$ denotes the longitudinal displacement of the beam. The control system \eqref{Sys1} is a well-posed boundary control system on a Hilbert space, with norm corresponding to the system energy. It has been shown that it is exactly controllable and hence exponentially stabilizable. In fact, by choosing the feedback $V (t) = -v_t (L, t)$, the solutions of the closed-loop system become exponentially stable \cite{Komornik1994}.\\
The authors in  \cite{Ozer2014,Ozer2013},  proposed a piezoelectric beam model with a magnetic effect, based on the Euler-Bernoulli and Rayleigh beam theory for small displacement; they considered an elastic beam covered by a piezoelectric material on its upper and lower surfaces, isolated by the edges and connected to a external electrical circuit to feed charge to the electrodes. As the voltage is prescribed at the electrodes, the following Lagrangian was considered:
\begin{equation}
\mathcal{L}=\int_0^T(\mathbf{K}-(\mathbf{P}+\mathbf{E})+\mathbf{B}+\mathbf{W})dt
\end{equation}
where $\mathbf{K}, \mathbf{P+E}, \mathbf{B}$ and $\mathbf{W}$ represent the (mechanical) kinetic energy, total stored energy, magnetic energy (electrical kinetic) of the beam and the work done by external forces, respectively.  Considering $v=v(x,t)$, $w=w(x,t)$ and $p=p(x,t)$ as functions that represent the longitudinal displacement of the center line, transverse displacement of the beam and the total load of the electric displacement along the transverse direction at each point $x$, respectively.  So, one can assume that
$$\mathbf{P+E}=\frac{h}{2}\int_0^L\left[\alpha\left(v_x^2+\frac{h^2}{12}w_{xx}^2-2\gamma\beta v_xp_x+\beta p_x^2\right)\right]dx,\quad\mathbf{B}=\frac{\mu h}{2}\int_0^Lp_t^2dx
$$
$$\mathbf{K}=\frac{\rho h}{2}\int_0^L\left[v_t^2+\left(\frac{h^2}{12}+1\right)w_t^2\right]\quad, \mathbf{W}=-\int_0^Lp_xV(t)dx.$$
From Hamilton’s principle for admissible displacement variations ${v, w, p}$ of $\mathcal{L}$ to zero and observing that the only external force acting on the beam is the voltage at the electrodes (the bending equation is decoupled) , they got the system
\begin{equation}\label{Initial model}
\begin{array}{ll}
\rho v_{tt}-\alpha v_{xx}+\gamma\beta p_{xx}=0,\\
\mu p_{tt}-\beta p_{xx}+\gamma \beta v_{xx}=0,
\end{array}
\end{equation}
where $\rho, \alpha,\gamma,\mu$ and $\beta$  denote the mass density, elastic stiffness, piezoelectric coefficient, magnetic permeability, water resistance coefficient of the beam and the prescribed voltage on electrodes of beam, respectively, and in addition, the relationship $\alpha=\alpha_1+\gamma^2\beta$.\\
They assumed that the beam is fixed at $x = 0$ and free at $x = L$, and thus, they got (from modelling) the following boundary conditions
\begin{equation}\label{BC}
\begin{array}{ll}
v(0,t)=p(0,t)=\alpha v_x(L,t)-\gamma\beta p_x(L,t)=0,\\
\beta p_x(L,t)-\gamma\beta v_x(L,t)=-\frac{V(t)}{h}.
\end{array}
\end{equation}
Then, the authors considered $V(t) = \kappa p_t(L,t)$ (electrical feedback controller) in \eqref{BC} and established strong stabilization for almost all system parameters and exponential stability for system parameters in a null measure set. In \cite{Ramos2018}, the authors,  inserted a dissipative term $\delta v_t$ in the first equation of \eqref{Initial model}, where $\alpha > 0$ is a constant and considered the following boundary condition
\begin{equation}
\begin{array}{ll}
v(0,t)=\alpha v_x(L,t)-\gamma\beta p_x(L,t)=0,\\
p(0,t)=\beta p_x(L,t)-\gamma\beta v_x(L,t)=0.
\end{array}
\end{equation}
The authors demonstrated the exponential decay of the system's energy using the energy technique. This indicates that the system is uniformly stabilized through the combined action of the friction term and the magnetic effect. A one-dimensional dissipative system of piezoelectric beams with a magnetic effect and localized damping was examined by the authors in \cite{Afilal2023}. Using a damping mechanism acting on only one component and a small portion of the beam, they demonstrated the exponential stability of the system. \\

In recent studies, researchers have explored the Timoshenko system, hybrid systems, coupled wave equations and Euler-Bernoulli beams incorporating control mechanisms based on fractional derivatives see \cite{AkilChitour2020,AkilMCRFIssa,AkilWehbe2019MCRF, Benaissa2019,Maryati2019, AkilGhader2020}. The consideration of such a damping is crucial in both theoretical and practical concepts and it describes memory and hereditary properties in various materials  \cite{app}. For instance, in viscoelasticity, materials like soils, concrete, rubber, biological tissue, and polymers exhibit elastic solid and viscous fluid-like responses (see \cite{Ronald,peter,RL,Bonetti}). Fractional calculus extends the traditional derivative to real orders, with various representations like Hadamard, Erdelyi-Kober, Riemann-Liouville, Riesz, Weyl, Gr\"unwald-Letnikov, Jumarie, and Caputo. Analyzing fractional dynamical systems is crucial for defining the fractional derivative appropriately. The Riemann-Liouville definition has physically unacceptable initial conditions, while the Caputo representation, introduced by Michele Caputo in 1967, uses integer-order derivatives with direct physical significance, mainly for memory effects. A recent alternative presented in \cite{Caputo-Fab} offers a fractional derivative without a singular kernel, providing unique properties like describing fluctuations and structures with different scales.\\

In \cite{Soufyane2021},  the authors considered a one-dimensional piezoelectric beams with magnetic effect damped with a weakly nonlinear feedback in the presence of a nonlinear delay term. They established an energy decay rate under appropriate assumptions on the weight of the delay. In \cite{Akil2022}, the author  investigated the stabilization of a system of piezoelectric beams under (Coleman or Pipkin)–Gurtin
thermal law with magnetic effect. He proved exponential
stability result for the  piezoelectric Coleman–Gurtin system and a polynomial energy decay rate of type $t^{-1}$ for the piezoelectric Gurtin–Pipkin system. 
In \cite{an2022stability}, the authors studied the stability of a piezoelectric beams with magnetic effects of fractional derivative type and with/ without thermal effects of Fourier’s law; they obtained an exponential stability by taking two boundary fractional dampings and additional thermal effect. In \cite{JDEOzer2021}, the authors utilized a variational approach to model vibrations on a piezoelectric beam with fractional damping, considering magnetic and thermal effects through Maxwell's equations and Fourier law. They proved the existence and uniqueness of solutions using semigroup theory, revealing smooth global attractors and exponential attractors. In \cite{AkilSoufyane2023}, the authors considered the stabilization of a one-dimensional piezoelectric with partial viscous dampings, they proved that it is sufficient to control the stretching of the center-line of the beam in x-direction to achieve the exponential stability along with numerical results to validate the theoretical result.  Recently in \cite{AkilOzer2024},the stability of longitudinal vibrations in transmission problems for two smart-system designs was investigated. The first design is a serially-connected Elastic-Piezoelectric-Elastic configuration with local damping applied solely to the piezoelectric layer. The authors demonstrated exponential stability for this system. The second design is a serially-connected Piezoelectric-Elastic configuration with local damping applied only to the elastic part. They showed that the stability of this system can be either polynomial or exponential, depending entirely on the arithmetic nature of a quotient involving all physical parameters.
\\

An important application of piezoelectric beams involves the interaction between a piezoelectric beam and a material that transfers heat. This interaction occurs at the joint of two beams, and is governed by transmission conditions \cite{Baur2014}. The fundamental PDE model for this physics problem can also be the same for the linearized version of the one-dimensional fluid equations by Navier-Stokes (heat equation)-structure (piezoelectric beam) interactions. This relationship is discussed in [\cite{Lions1985}, Chapter 7],\cite{zhang_zuazua_2007}, and is essential in various scenarios such as airflow along the aircraft \cite{Castille} or piezoelectric energy harvesting under the ocean.  Recently, in \cite{ozer2023exponential}, the authors examines a system involving a heat-transferring copper rod and a magnetizable piezoelectric beam in a transmission line setting. The heat and beam interactions are not exponentially stable, so two types of boundary-type state feedback controllers are proposed: static and hybrid. The PDE system is shown to have exponentially stable solutions using Lyapunov functions and various multipliers. 

To the best of our best knowledge,  the heat-transferring copper rod and a magnetizable piezoelectric beam with fractional damping acting on the longitudinal displacement of the center line of the piezoelectric material is not treated mathematically in the literature.  In pursuit of this objective, this paper endeavors to investigate the stability of such a system, wherein the copper rod and the magnetizable piezoelectric beam are interconnected via transmission conditions. The system is given as follows:

\begin{eqnarray}
\left\lbrace
\label{102}
\begin{array}{l}
z_{t} - \kappa z_{xx} = 0,\quad x\in (-\ell_{1},\,0),\quad t \geq 0, \\
\rho v_{tt} - \chi v_{xx} + \gamma\beta p_{xx} + \partial_{t}^{\alpha,\,\eta}v = 0,\quad x\in (0,\,\ell_{2}), \quad t \geq 0,  \\
\mu p_{tt} - \beta p_{xx} + \gamma\beta v_{xx}  = 0,\quad x\in (0,\,\ell_{2}),\quad t \geq 0, \\
z(-\ell_{1},\,t)  = 0, \quad t \geq 0,\\ 
z(0,\,t)= v_t(0,\,t), \quad t \geq 0,\\ 
p(\ell_{2},\,t)  = v(\ell_{2},\,t) = 0, \quad t \geq 0,\\ 
\chi v_x(0,\,t)-\gamma\beta p_x(0,\,t)=\kappa z_x(0,\,t), \quad t \geq 0,\\
\beta p_x(0,\,t)=\gamma\beta v_x(0,\,t), \quad t \geq 0,
\end{array}
\right. 
\end{eqnarray}
with the following initial conditions
\begin{equation}
\left\{\begin{array}{ll}
z(x,\,0) = z_{0}(x),&\quad x\in (-\ell_{1},\,0), \\
v(x,\,0) =  v_{0}(x),\quad v_{t}(x,\,0) = v_{1}(x),&\quad x\in (0,\,\ell_{2}), \\
p(x,\,0) =  p_{0}(x),\quad p_{t}(x,\,0) = p_{1}(x),&\quad x\in (0,\,\ell_{2}),
\end{array}\right.
\end{equation}
where $z = z(x,\,t)\,$ denote the heat distribution on the copper rod,  $v = v(x,\,t)$ denote the longitudinal oscillations of the center line of the beam and $p = p(x,\,t)$ the total charge accumulated at the electrodes of the beam.  The parameters $\chi,$ $\chi_{1},$ $\gamma,$ $\beta,$ $\rho,$ and $\mu$ represent the piezoelectric material parameters. Each parameter is listed in the table below, along with its corresponding significance. Moreover, the stiffness coefficient $\chi,$ due to the involvement of the piezoelectric, is different from the one on the fully elastic material $\chi_{1}$ and $\chi_1=\chi-\gamma^2\beta>0$. 
\begin{table}[h!]
\centering
\begin{tabular}{||c |c ||} 
 \hline
  Parameter & Corresponding Significance \\ [0.7ex] 
 \hline\hline
$\chi$ & The elastic stiffness coefficient  \\ [0.08in]
$\kappa$ & The thermal diffusivity constant\\[0.08in]
$\gamma$ & Piezoelectric coefficient  \\[0.08in]
 $\beta$ & The beam coefficient of impermeability  \\[0.08in]
$\rho$ & Mass density per unit volume,  \\[0.08in]
$\mu$& Magnetic permeability of beam\\ [1ex] 
 \hline
\end{tabular}
\label{table:1}
\caption{Table}
\end{table}

\renewcommand{\theequation}{\thesection.\arabic{equation}}
\setcounter{equation}{0}
\section{Augmented Model and Preliminaries}

\label{}

\noindent
As mentioned in the introduction, fractional calculus is widely used. Here, we provide a brief review of fractional calculus. Unlike for the fractional integral, there are several slightly different definitions for the fractional derivative operator. For a thorough understanding of the concept in the Caputo sense the reader can refer to  \cite{C1, C2, C3, KST}.
\\
\noindent
Let $0 < \alpha < 1$.  The Caputo fractional integral of order
$\alpha$ is defined by
\begin{align}\nonumber
	I^{\alpha}f(t) =
\frac{1}{\Gamma(\alpha)}\int_{0}^{t}(t - s)^{\alpha - 1}
f(s)ds,
\end{align}
where $\Gamma$ is the well-known gamma function, $f \in
L^{1}(0,\,+\infty)$. 
The Caputo fractional derivative operator of order $\alpha$ is
defined by
\begin{equation}\nonumber 
	D^{\alpha}f(t) = I^{1 - \alpha}D f(t) :=
\frac{1}{\Gamma(1 - \alpha)}\int_{0}^{t}(t - s)^{-\alpha}
f'(s)ds,
\end{equation}
with $f \in W^{1,\,1}(0,\,+\infty).$
\noindent 
We note that Caputo definition of fractional derivative does possess a
very simple but interesting  interpretation: if the function $f(t)$
represents the strain history within a viscoelastic material whose
relaxation function is $[\Gamma(1 - \alpha)t^{\alpha}]^{-1}$ then
the material will experience at  any time $t$ a total stress given
the expression $D^{\alpha}f(t).$
Moreover, it is easy to show that  $D^{\alpha}$ is a left inverse of
$I^{\alpha},$  but in general it is not a right inverse. More
precisely, we have
\begin{eqnarray*}
D^{\alpha}I^{\alpha}f =  f, \qquad I^{\alpha}D^{\alpha}f(t) =  f(t)
- f(0).
\end{eqnarray*}
For the proof of above equalities and more properties of fractional
calculus  see \cite{SKM}. \\
\\
In this work, we consider slightly different versions in 
\eqref{201} and \eqref{202}.  Indeed, Choi and
MacCamy \cite{CM} establish the following definition of fractional
integro-differential operators  with weight exponential. Let  $0 <
\alpha < 1\,$ and $\eta \ge 0.$  The exponential fractional integral of
order $\alpha$ is defined by
\begin{equation}
\label{201} I^{\alpha,\,\eta}f(t) =
\frac{1}{\Gamma(\alpha)}\int_{0}^{t}e^{-\eta(t - s)}(t -
s)^{\alpha - 1}f(s)ds,
\end{equation}
with  $f \in L^{1}([0,\,+\infty)).$
\\
The exponential fractional derivative operator of order $\alpha$ is
defined by
\begin{equation}\label{202}
 \partial_{t}^{\alpha,\,\eta}f(t) =  \frac{1}{\Gamma(1 - \alpha)}
\int_{0}^{t}e^{-\eta(t - s)}(t -
s)^{-\alpha}f'(s)ds,
\end{equation}
with $f \in W^{1,\,1}([0,\,+\infty)).$
\begin{Remark}
Note that $\partial_{t}^{\alpha,\,\eta}f(t) = I^{1 - \alpha,\,\eta} f'(t).$
\end{Remark}
\noindent
The following results are going to be used later in the paper. First, we recall theorem 2 stated in \cite{15}.
\begin{Theorem}\label{theorem1}
Let $\alpha\in (0,1)$, $\eta\geq 0$ and
$$\mu(\xi)=\abs{\xi}^{\frac{2\alpha-1}{2}}$$ be the function defined almost everywhere on $\mathbb{R}$. Then the relation between the {\it Input} $U$ and the {\it Output} $O$ of the following system
\begin{eqnarray}
\partial_t\varphi(x,t, \xi)+(\xi^2+\eta)\varphi(x,t, \xi)-U(x,t)|\xi|^{\frac{2\alpha-1}{2}}&=&0,(x,\xi,t)\in (0,L)\times \mathbb{R}\times (0,\infty),\label{aug1}\\  
\varphi(x,0, \xi)&=&0,(x,\xi)\in (0,L)\times \mathbb{R},\label{aug2}\\
\displaystyle{O(x,t)-\mathfrak{C}\int_{\mathbb{R}}|\xi|^{\frac{2\alpha-1}{2}}\varphi(x,t, \xi)d\xi }&=&0,(x,t)\in (0,L)\times (0,\infty),\label{aug3}
\end{eqnarray}
is given by 
\begin{equation}\label{relation}
O=I^{1-\alpha,\eta}U,
\end{equation}
where 
$
\mathfrak{C}=\pi^{-1}\sin(\alpha \pi).
$
\end{Theorem}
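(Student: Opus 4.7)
The plan is to solve the first-order linear ODE in $t$ explicitly, plug the representation of $\varphi$ into the output formula \eqref{aug3}, swap the order of integration, and recognize the remaining $\xi$-integral as a Gamma integral that, combined with the constant $\mathfrak{C}$, yields exactly the kernel of $I^{1-\alpha,\eta}$.

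\textbf{Step 1: Explicit solution of the transport-type ODE.} For fixed $x$ and $\xi$, equation \eqref{aug1} with the initial data \eqref{aug2} is a scalar linear ODE in $t$ with constant (in $t$) coefficient $\xi^2+\eta$ and forcing $U(x,t)|\xi|^{(2\alpha-1)/2}$. Duhamel's formula gives
\begin{equation*}
\varphi(x,t,\xi) \;=\; |\xi|^{\frac{2\alpha-1}{2}} \int_{0}^{t} e^{-(\xi^{2}+\eta)(t-s)}\, U(x,s)\,ds.
\end{equation*}

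\textbf{Step 2: Substitution into the output and Fubini.} Inserting this into \eqref{aug3} and using Fubini (justified since $U\in L^{1}_{\mathrm{loc}}$ and the Gaussian-type weight makes the double integral absolutely convergent for $t>0$), I obtain
\begin{equation*}
O(x,t) \;=\; \mathfrak{C}\int_{0}^{t} e^{-\eta(t-s)}\,U(x,s)\left(\int_{\mathbb{R}} |\xi|^{2\alpha-1}\, e^{-\xi^{2}(t-s)}\, d\xi\right)ds.
\end{equation*}

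\textbf{Step 3: Computation of the $\xi$-integral.} By parity and the change of variables $u=\xi^{2}(t-s)$,
\begin{equation*}
\int_{\mathbb{R}} |\xi|^{2\alpha-1}\, e^{-\xi^{2}(t-s)}\, d\xi \;=\; 2\int_{0}^{\infty} \xi^{2\alpha-1}\, e^{-\xi^{2}(t-s)}\, d\xi \;=\; (t-s)^{-\alpha}\,\Gamma(\alpha).
\end{equation*}
Therefore
\begin{equation*}
O(x,t) \;=\; \mathfrak{C}\,\Gamma(\alpha)\int_{0}^{t} e^{-\eta(t-s)}\,(t-s)^{-\alpha}\, U(x,s)\,ds.
\end{equation*}

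\textbf{Step 4: Identification of the constant via the reflection formula.} The Euler reflection identity $\Gamma(\alpha)\Gamma(1-\alpha)=\pi/\sin(\alpha\pi)$ combined with $\mathfrak{C}=\pi^{-1}\sin(\alpha\pi)$ yields $\mathfrak{C}\,\Gamma(\alpha)=1/\Gamma(1-\alpha)$. Comparing the resulting expression with \eqref{201} for order $1-\alpha$ establishes $O=I^{1-\alpha,\eta}U$, which is \eqref{relation}.

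The only mildly technical point is the evaluation of the Gaussian moment in Step 3 and the justification of Fubini; the latter requires the observation that the integrability of $|\xi|^{2\alpha-1}e^{-\xi^{2}\tau}$ at $\xi=0$ needs $\alpha>0$ and at infinity is automatic, which is exactly guaranteed by the hypothesis $\alpha\in(0,1)$. All other steps are purely algebraic.
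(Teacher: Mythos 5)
Your proof is correct: the Duhamel representation, the Fubini interchange, the evaluation of $2\int_0^\infty \xi^{2\alpha-1}e^{-\xi^2(t-s)}\,d\xi=\Gamma(\alpha)(t-s)^{-\alpha}$, and the identification $\mathfrak{C}\,\Gamma(\alpha)=1/\Gamma(1-\alpha)$ via Euler's reflection formula all check out, and together they give exactly $O=I^{1-\alpha,\eta}U$ as defined in \eqref{201}. Note that the paper itself offers no proof of this statement --- it simply recalls the result from Mbodje \cite{15} --- and your argument is precisely the standard derivation given there (the ``diffusive realization'' of the fractional integral), so there is nothing to contrast; the only point worth tightening is that absolute convergence in the Fubini step also uses $\alpha<1$ to make $(t-s)^{-\alpha}$ integrable in $s$ near $s=t$, not just $\alpha>0$ for integrability at $\xi=0$.
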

\noindent In the preceding theorem, when utilizing the input $U(x,t)=v_{t}(x,t)$ and applying Equation \eqref{202}, the resulting output $O$ is expressed as:
$$
O(x,t)=I^{1-\alpha,\eta}v_{t}(x,t)=\frac{1}{\Gamma(1-\alpha)}\int_0^t(t-s)^{-\alpha}e^{-\eta(t-s)}\partial_sv(x,s)ds=\partial_t^{\alpha,\eta}v(x,t).
$$
Thus, by taking the input $U(x,t)=v_{t}(x,t)$ in Theorem \ref{theorem1} and using the above equation, we get 
\begin{equation}\label{augnew}
\begin{array}{llll}
\partial_t\varphi(x,t, \xi)+(\xi^2+\eta)\varphi(x,t, \xi)-v_{t}(x,t)\mu(\xi)&=&0,&(x,\xi,t)\in (0,L)\times \mathbb{R}\times (0,\infty),\\[0.1in]
\varphi(x,0, \xi)&=&0,&(x,\xi)\in (0,L)\times \mathbb{R},\\[0.1in]
\displaystyle{\partial_t^{\alpha,\eta}v(x,t)-\mathfrak{C}\int_{\mathbb{R}}\mu(\xi)\varphi(x,t, \xi)d\xi }&=&0,&(x,t)\in (0,L)\times (0,\infty).
\end{array}
\end{equation}
From system \eqref{augnew}, we deduce that system \eqref{102} can be recasted into the following augmented model 
\begin{align}
\label{209}
\begin{cases}
z_{t} - \kappa z_{xx} = 0, &\quad x\in (-\ell_{1},\,0),\quad t \geq 0, \\
\rho v_{tt}(x,\,t) - \chi v_{xx}(x,\,t) + \gamma\beta p_{xx}(x,\,t) + \mathfrak{C}\displaystyle\int_{\mathbb{R}}\mu(\xi)\varphi(x,\,t,\,\xi)
d\xi = 0, &\quad x\in (0, \ell_2),\quad t \geq 0, \\
\medskip
\mu p_{tt}(x,\,t) - \beta p_{xx}(x,\,t) + \gamma\beta v_{xx}(x,\,t) = 0,&\quad x\in (0, \ell_2),\quad t \geq 0, \\
\medskip
\varphi_{t}(x,\,t,\,\xi) + (|\xi|^{2} + \eta)\varphi(x,\,t,\,\xi)  = \mu(\xi)v_{t}(x,\,t), &\quad x\in (0, \ell_2),\quad t \geq 0,\xi\in \R, \\
z(-\ell_{1},\,t)  = 0,p(\ell_{2},\,t)  = v(\ell_{2},\,t) = 0,&\quad t \geq 0,\\ 
\medskip
z(0,\,t)= v_t(0,\,t),&\quad t \geq 0,\\ 
\medskip
\chi v_x(0,\,t)-\gamma\beta p_x(0,\,t)=\kappa z_x(0,\,t),&\quad t \geq 0,\\ 
\medskip
\beta p_x(0,\,t)=\gamma\beta v_x(0,\,t),&\quad t \geq 0,\\
\end{cases}
\end{align}
with the following initial conditions
\begin{equation}
\left\{\begin{array}{ll}
z(x,\,0) = z_{0}(x),&\quad x\in (-\ell_{1},\,0), \\
v(x,\,0) =  v_{0}(x),\quad v_{t}(x,\,0) = v_{1}(x),&\quad x\in (0,\,\ell_{2}), \\
p(x,\,0) =  p_{0}(x),\quad p_{t}(x,\,0) = p_{1}(x),&\quad x\in (0,\,\ell_{2}),\\
\varphi(x,\,0,\,\xi) = 0, &\quad x\in (0,\,\ell_{2}), \xi\in \R.
\end{array}\right.
\end{equation}
where $\ 0 < \alpha < 1,\ \eta \geq 0,$ $z=z(x,\,t),$ $v=v(x,\,t),$ $p=p(x,\,t)$ are real-valued functions and $(x,\,t,\,\xi)\in (-\ell_{1},\,\ell_{2})\times (0,\,+\infty)\times \mathbb{R}.$

\begin{Lemma}
\label{lemma201}
For every solution $(z,\,v,\,p)$ of the system
\eqref{209}, The total  energy $\mathcal{E}: \mathbb{R}^{+}\rightarrow \mathbb{R}^{+}$ is defined at time $t$ by
\begin{align}
\label{210}\frac{d}{dt}\mathcal{E}(t) = & -\kappa\int_{-l_1}^0 |z_x|^2dx  -\mathfrak{C}\int_0^{\ell_2} \int_{\mathbb{R}}(|\xi|^{2} + \eta)|\varphi(x,t,\,\xi)|^{2}d\xi,
\end{align}
with
\begin{eqnarray}
\label{211}\mathcal{E}(t) & = & \frac{1}{2}\left[{\rm TE} + {\rm MechKE} + {\rm MagKE} + {\rm PE} + \mathfrak{C}\|\varphi\|_{L^{2}(\mathbb{R};\,(0,\,\ell_{2}))}^{2}\right],
\end{eqnarray}
where
\begin{align*}
&{\rm Thermal\ Energy} = TE = \|z(x,\,t)\|_{L^{2}(-\ell_{1},\,0)}^{2},\\ 
&{\rm Mechanical\ Kinetic\ Energy} = {\rm MechKE} =\rho \|v_{t}(x,\,t)\|_{L^{2}(0,\,\ell_{2})}^{2}, \\
&{\rm Magnetic\ Kinetic\ Energy} = {\rm MagKE} =\mu \|p_{t}(x,\,t)\|_{L^{2}(0,\,\ell_{2})}^{2}, \\
&{\rm Potential\ Energy} = {\rm PE} = \chi_1\|v_x\|^2_{L^{2}(0,\,\ell_{2})},\\
&{\rm Electromechanical \ Energy} = {\rm ElectroMechE} = \beta\|\gamma v_x-p_x\|^2_{L^{2}(0,\,\ell_{2})}.
\end{align*}

\end{Lemma}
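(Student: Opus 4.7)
The plan is a standard energy/multiplier argument: I multiply each equation of \eqref{209} by a suitable test function, integrate in space, integrate by parts, and then show that the surviving boundary terms at $x=0$ cancel by the three transmission conditions.

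Concretely, I first multiply the heat equation by $z$ and integrate on $(-\ell_{1},0)$; using $z(-\ell_{1},t)=0$ this gives $\frac{1}{2}\frac{d}{dt}\|z\|_{L^{2}(-\ell_1,0)}^{2}+\kappa\|z_{x}\|_{L^{2}(-\ell_1,0)}^{2}=\kappa z(0,t)z_{x}(0,t)$. I then multiply the $v$-equation by $v_{t}$ on $(0,\ell_{2})$ (using $v(\ell_{2},t)=0$, hence $v_{t}(\ell_{2},t)=0$), the $p$-equation by $p_{t}$ on $(0,\ell_{2})$ (using $p(\ell_{2},t)=0$), and the $\varphi$-equation by $\mathfrak{C}\,\varphi$ on $(0,\ell_{2})\times\R$. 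Summing the four resulting identities, the coupling terms $\chi\int v_{x}v_{xt}-\gamma\beta\int(p_{x}v_{xt}+v_{x}p_{xt})+\beta\int p_{x}p_{xt}$ reorganise as $\frac{1}{2}\frac{d}{dt}\bigl[\chi\|v_{x}\|^{2}-2\gamma\beta\langle v_{x},p_{x}\rangle+\beta\|p_{x}\|^{2}\bigr]$, which via the relation $\chi=\chi_{1}+\gamma^{2}\beta$ produces exactly the elastic–piezoelectric part $\tfrac12(\mathrm{PE}+\mathrm{ElectroMechE})$ of the energy. Moreover, the cross term $\mathfrak{C}\int_{0}^{\ell_{2}}\!\int_{\R}\mu(\xi)\varphi\,v_{t}\,d\xi\,dx$ appears with opposite signs in the $v$-identity and in the $\varphi$-identity and cancels, leaving only the memory dissipation $\mathfrak{C}\int_{0}^{\ell_{2}}\!\int_{\R}(|\xi|^{2}+\eta)|\varphi|^{2}\,d\xi\,dx$ coming from the $\varphi$-equation.

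It remains to verify that the boundary contributions at $x=0$ also cancel. After using the transmission identity $z(0,t)=v_{t}(0,t)$, these contributions regroup into $v_{t}(0,t)\bigl[\kappa z_{x}(0,t)-\chi v_{x}(0,t)+\gamma\beta p_{x}(0,t)\bigr]+p_{t}(0,t)\bigl[\gamma\beta v_{x}(0,t)-\beta p_{x}(0,t)\bigr]$. The first bracket vanishes by the transmission condition $\chi v_{x}(0,t)-\gamma\beta p_{x}(0,t)=\kappa z_{x}(0,t)$ and the second by $\beta p_{x}(0,t)=\gamma\beta v_{x}(0,t)$, which yields exactly \eqref{210}.

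The main obstacle is not conceptual but bookkeeping: one must carefully track signs and regroup every $x=0$ boundary term so that it factors as $v_{t}(0,t)$ or $p_{t}(0,t)$ times precisely one side of a transmission condition. In particular, the identification $z(0,t)=v_{t}(0,t)$ is crucial in order to turn the heat boundary term $\kappa z(0,t)z_{x}(0,t)$ into a contribution compatible with the mechanical boundary terms produced by the $v$-multiplier. All the other steps are routine integrations by parts.
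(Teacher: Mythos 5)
Your proposal is correct and follows essentially the same route as the paper: multiply the heat, $v$, $p$, and $\varphi$ equations by $z$, $v_t$, $p_t$, and $\mathfrak{C}\varphi$ respectively, integrate by parts, use $\chi=\chi_1+\gamma^2\beta$ to reassemble the elastic terms into $\chi_1\|v_x\|^2+\beta\|\gamma v_x-p_x\|^2$, cancel the cross term $\mathfrak{C}\int\!\int\mu(\xi)\varphi\,\overline{v_t}$ against the $\varphi$-identity, and eliminate the $x=0$ boundary terms via the three transmission conditions. The bookkeeping you describe matches the paper's computation exactly.
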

\noindent
{\it Proof.}  
Multiplying in \eqref{209}$_{1}$ by $\overline{z}$,  integrating over $x\in (-\ell_{1},\,0)$ and taking the real part, we obtain
\begin{eqnarray}
\label{213}
\frac{1}{2}\dfrac{d}{dt}\int_{-\ell_{1}}^{0}|z(x,\,t)|^{2}dx + \kappa\int_{-\ell_{1}}^{0}|z_{x}(x,\,t)|^{2}dx-\kappa\Re\left(z_x(0,t)\overline{z}(0,t)\right) = 0.
\end{eqnarray}
Multiplying \eqref{209}$_{2}$ by $\overline{v_{t}}$,  integrating over $x\in (0,\,\ell_{2}),$ and taking the real part we obtain
\begin{eqnarray}\label{214}
& &\dfrac{1}{2}\dfrac{d}{dt}\left[\int_{0}^{\ell_{2}}\rho|v_{t}(x,\,t)|^{2}dx + \chi\int_{0}^{\ell_{2}}|v_{x}(x,\,t)|^{2}dx\right] -\Re\left( \gamma\beta\int_{0}^{\ell_{2}}p_{x}(x,\,t)\overline{v_{tx}}(x,\,t)dx\right)  \nonumber \\
& & +\ \Re\left( \mathfrak{C}\int_{0}^{\ell_{2}}\overline{v_{t}}(x,\,t)\int_{\mathbb{R}} \mu(\xi)\varphi(x,\,t,\,\xi)d\xi dx\right)+\chi\Re\left(v_x(0,t)\overline{v_t}(0,t)\right ) \nonumber \\
&&-\gamma\beta\Re\left(p_x(0,t)\overline{v_t}(0,t)\right) = 0.
\end{eqnarray}
Multiplying \eqref{209}$_{3}$ by $\overline{p_{t}}$ and using similar calculations as above, we obtain
\begin{eqnarray}
\label{215}
\dfrac{1}{2}\dfrac{d}{dt}\left[\int_{0}^{\ell_{2}}\mu|p_{t}(x,\,t)|^{2}dx + \beta\int_{0}^{\ell_{2}}|p_{x}(x,\,t)|^{2}dx\right] -\Re\left( \gamma\beta\int_{0}^{\ell_{2}}v_{x}(x,\,t)p_{tx}(x,\,t)dx\right) \nonumber
\\+\beta\Re\left(p_x(0,t)\overline{p_t}(0,t)\right)-\gamma\beta \Re\left(v_x(0,t)\overline{p_t(0,t)}\right)= 0.
\end{eqnarray}
Adding equations \eqref{213}, \eqref{214} and \eqref{215} and using $\dfrac{d}{dt}(v_{x}(x,\,t)p_{x}(x,\,t)) = v_{x}(x,\,t)p_{tx}(x,\,t) + v_{tx}(x,\,t)p_{x}(x,\,t),$  $\chi_1=\chi-\gamma^2 \beta$ and the transmission conditions lead to
\begin{eqnarray}
\label{216}
& & \frac{1}{2}\dfrac{d}{dt}\left[\int_{-\ell_{1}}^{0}|z(x,\,t)|^{2}dx +\rho \int_{0}^{\ell_{2}}|v_{t}(x,\,t)|^{2}dx + \chi_1\int_{0}^{\ell_{2}}|v_{x}(x,\,t)|^{2}dx \right. \nonumber  \\
& & \left.  \qquad\quad +\mu \int_{0}^{\ell_{2}}|p_{t}(x,\,t)|^{2}dx + \beta\int_{0}^{\ell_{2}}|\gamma v_x(x,\,t)-p_{x}(x,\,t)|^{2}dx \right] \nonumber \\
& &   \qquad\quad +\kappa \int_{-\ell_{1}}^{0}|z_{x}(x,\,t)|^{2}dx +\Re\left( \mathfrak{C}\int_{0}^{\ell_{2}}\overline{ v_{t}}(x,\,t)\int_{\mathbb{R}}\mu(\xi)\varphi(x,\,t,\xi)d\xi dx\right) = 0.
\end{eqnarray}
Finally, multiplying \eqref{209}$_{4}$ by $\mathfrak{C}\overline{\varphi}$ and integrating over $\xi\in\mathbb{R}$ and $x\in L^{2}(0,\,\ell_{2})$, and taking the real part,  we get
\begin{eqnarray}
\label{217}
& & \dfrac{\mathfrak{C}}{2}\dfrac{d}{dt}\int_{0}^{\ell_{2}}\int_{\mathbb{R}}|\varphi(x,\,t,\,\xi)|^{2}d\xi dx + \Re\left(\int_{0}^{\ell_{2}}\mathfrak{C}\int_{\mathbb{R}} (|\xi|^{2} + \eta)|\varphi(x,\,t,\,\xi)|^{2}d\xi dx\right)\nonumber \\
& = & \int_{0}^{\ell_{2}}\mathfrak{C}\overline{v_{t}}(x,\,t)\int_{\mathbb{R}}\mu(\xi)\varphi(x,\,t,\,\xi)d\xi dx.
\end{eqnarray}
Combining \eqref{216} and \eqref{217} yields
\begin{eqnarray*}
& &\dfrac{1}{2}\dfrac{d}{dt}\left[\|z(x,\,t)\|_{L^{2}(-\ell_{1},\,0)}^{2} + \rho\|v_{t}(x,\,t)\|_{L^{2}(0,\,\ell_{2})}^{2} +\mu  \|p_{t}(x,\,t)\|_{L^{2}(0,\,\ell_{2})}^{2} \right. \\
& & \left.\qquad\quad +\chi_1\|v_{x}(x,\,t)\|_{L^{2}(0,\,\ell_{2})}^{2} +\beta \|\gamma v_x(x,\,t)-p_{x}(x,\,t)\|_{L^{2}(0,\,\ell_{2})}^{2} + \mathfrak{C}\|\varphi(t,\,\xi)\|_{L^{2}(\R, (0,\,\ell_{2}))}^{2}d\xi\right] \\
& &\qquad = -\kappa  \|z_{x}(x,\,t)\|_{L^{2}(-\ell_{1},\,0)}^{2} - \mathfrak{C}\int_{\mathbb{R}} (|\xi|^{2} + \eta)\|\varphi(t,\,\xi)\|_{L^{2}(0,\,\ell_{2})}^{2}d\xi. \nonumber
\end{eqnarray*}

\renewcommand{\theequation}{\thesection.\arabic{equation}}
\setcounter{equation}{0}
\section{Setting of the Semigroup}
\label{}
\noindent In this section, we use results of the semigroup theory of linear operators   to obtain an existence theorem for the solutions of the system
\eqref{209} (see \cite{Pazy}). First we define the following Hilbert spaces

\medskip
 
\noindent  
$$H_R^1(0,\ell_2)=\left\{u\in H^1(0,\ell_2); u(l_2)=0\right\}$$
$$ \text{and} \quad
 H_L^1(-\ell_1,0)=\left\{u\in H^1(-\ell_1,0); u(-l_1)=0\right\}.$$
The energy space is defined by
 \begin{eqnarray}\label{EnergySpace}
\mathcal{H} =  L^{2}(-\ell_{1},\,0)\times H_{R}^{1}(0,\,\ell_{2})\times L^{2}(0,\,\ell_{2})\times H_{R}^{1}(0,\,\ell_{2}) \times L^{2}(0,\,\ell_{2}) \times L^{2}(\mathbb{R};\,L^{2}(0,\,\ell_{2})),
\end{eqnarray}
 and equipped with the inner product given by 
\begin{equation}
\begin{array}{ll}
\displaystyle
\langle \mathbb{U},\,\widetilde{\mathbb{U}}\rangle_{{\mathcal{ H}}} =  \int_{-\ell_1}^0|z|^2dx + \int_0^{\ell_2}\left(\chi_1|v_x|^2+\rho|\textsf{V}|^2
+ \mu|\textsf{P}|^2\right)+ \int_0^{\ell_2}\beta|\gamma v_x-p_x|^2dx
+\mathfrak{C} \int_0^{\ell_2}\int_{\R}|\varphi(x,\xi)|^2dx,
\end{array}
\end{equation}
where $\mathbb{U} = (z,\,v,\,\textsf{V},\,p,\,\textsf{P},\,\varphi)^{T}$,  $\widetilde{\mathbb{U}}
= (\tilde{z},\,\tilde{v},\,\tilde{\textsf{V}},\,\tilde{p},\,\tilde{\textsf{P}},\,\tilde{\varphi})^{T}\in \mathcal{H}.$ 
The associated norm is given by
\begin{eqnarray}\label{normh}
\|\mathbb{U}\|_{\mathcal {H}}^{2} = \|z\|_{L^{2}(-\ell_{1},\,0)}^{2} + \chi_1  \|v_x\|_{L^{2}(0,\,\ell_{2})}^{2}+\rho \|\textsf{V}\|_{L^{2}(0,\,\ell_{2})}^{2} +\mu \|\textsf{P}\|_{L^{2}(0,\,\ell_{2})}^{2} \nonumber  \\
+\beta \|\gamma v_x-p_x\|_{L^{2}(0,\,\ell_{2})}^{2} + \mathfrak{C}\|\varphi\|_{L^{2}(\mathbb{R}:\,L^{2}(0,\,\ell_{2}))}^{2}.
\end{eqnarray}
We note that the standard norm on $\mathcal{H}$ is given as follows
\begin{equation}\label{norms}
\begin{array}{lll}
\|\mathbb{U}\|_{\mathcal {S}}^{2} = \|z\|_{L^{2}(-\ell_{1},\,0)}^{2} &+ \|\textsf{V}\|_{L^{2}(0,\,\ell_{2})}^{2} + \|\textsf{P}\|_{L^{2}(0,\,\ell_{2})}^{2} + \|v_x\|_{L^{2}(0,\,\ell_{2})}^{2} \\
&+ \|p_x\|_{L^{2}(0,\,\ell_{2})}^{2} +\|\varphi\|_{L^{2}(\mathbb{R}:\,L^{2}(0,\,\ell_{2}))}^{2}.
\end{array}
\end{equation}
\begin{Lemma}\label{LemmaEquiv}
The norm defined in \eqref{normh} is equivalent to the standard norm in \eqref{norms} on $\mathcal{H}$, i.e. for all $\mathbb{U} = (z,\,v,\,\textsf{V},\,p,\,\textsf{P},\,\varphi)^{T}\in \mathcal{H}$,  there exist two positive constants $C_1$, $C_2$, independent of $\mathbb{U}$, such that
\begin{equation}\label{NormsEquiva}
C_1\|\mathbb{U}\|_{\mathcal{S}}\leq \|\mathbb{U}\|_{\mathcal{H}}\leq C_2 \|\mathbb{U}\|_{\mathcal{S}}.
\end{equation}
\end{Lemma}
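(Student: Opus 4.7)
The plan is to verify the two inequalities in \eqref{NormsEquiva} directly, comparing the two norms term by term. Five of the six terms in $\|\mathbb{U}\|_{\mathcal{H}}^{2}$ and $\|\mathbb{U}\|_{\mathcal{S}}^{2}$ (namely those in $z$, $\textsf{V}$, $\textsf{P}$, $\varphi$, and $v_{x}$) differ only by multiplicative constants, so the only nontrivial work is to reconcile the combined contribution $\chi_{1}\|v_{x}\|^{2} + \beta\|\gamma v_{x} - p_{x}\|^{2}$ appearing in $\|\mathbb{U}\|_{\mathcal{H}}^{2}$ with the separated pair $\|v_{x}\|^{2} + \|p_{x}\|^{2}$ appearing in $\|\mathbb{U}\|_{\mathcal{S}}^{2}$. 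The elementary inequality $(a+b)^{2}\le 2a^{2}+2b^{2}$ will do all the work.

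For the upper bound $\|\mathbb{U}\|_{\mathcal{H}}\le C_{2}\|\mathbb{U}\|_{\mathcal{S}}$, I would estimate
\begin{equation*}
\beta\|\gamma v_{x}-p_{x}\|_{L^{2}(0,\ell_{2})}^{2}\le 2\beta\gamma^{2}\|v_{x}\|_{L^{2}(0,\ell_{2})}^{2}+2\beta\|p_{x}\|_{L^{2}(0,\ell_{2})}^{2},
\end{equation*}
plug this into \eqref{normh}, and regroup the coefficients of each standard term. Choosing
\begin{equation*}
C_{2}^{2}=\max\bigl(1,\;\chi_{1}+2\beta\gamma^{2},\;\rho,\;\mu,\;2\beta,\;\mathfrak{C}\bigr)
\end{equation*}
then yields the inequality immediately.

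For the lower bound $C_{1}\|\mathbb{U}\|_{\mathcal{S}}\le \|\mathbb{U}\|_{\mathcal{H}}$, the key step is to recover $\|p_{x}\|^{2}$ from the energy terms. Writing $p_{x}=\gamma v_{x}-(\gamma v_{x}-p_{x})$ and applying the same elementary inequality gives
\begin{equation*}
\|p_{x}\|_{L^{2}(0,\ell_{2})}^{2}\le 2\gamma^{2}\|v_{x}\|_{L^{2}(0,\ell_{2})}^{2}+2\|\gamma v_{x}-p_{x}\|_{L^{2}(0,\ell_{2})}^{2},
\end{equation*}
which lets me dominate $\|v_{x}\|^{2}+\|p_{x}\|^{2}$ by $(1+2\gamma^{2})\|v_{x}\|^{2}+2\|\gamma v_{x}-p_{x}\|^{2}$. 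Substituting into \eqref{norms} and using that each weight $\chi_{1},\rho,\mu,\beta,\mathfrak{C}$ is strictly positive allows me to take
\begin{equation*}
C_{1}^{-2}=\max\left(1,\;\frac{1+2\gamma^{2}}{\chi_{1}},\;\frac{1}{\rho},\;\frac{1}{\mu},\;\frac{2}{\beta},\;\frac{1}{\mathfrak{C}}\right).
\end{equation*}

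The only point that really needs to be flagged, and therefore the nearest thing to a genuine obstacle, is the strict positivity of $\chi_{1}=\chi-\gamma^{2}\beta$ assumed in the introduction: without $\chi_{1}>0$, the coefficient in front of $\|v_{x}\|^{2}$ inside $\|\mathbb{U}\|_{\mathcal{H}}^{2}$ would vanish and the inversion bounding $\|p_{x}\|^{2}$ could not close. Granted this hypothesis, the rest is just bookkeeping and the equivalence follows.
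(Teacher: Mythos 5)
Your proposal is correct and follows essentially the same route as the paper: Young's inequality $(a+b)^2\le 2a^2+2b^2$ applied to $\gamma v_x - p_x$ for the upper bound and to $p_x=\gamma v_x-(\gamma v_x-p_x)$ for the lower bound, with only cosmetic differences in how the resulting constants are packaged (and your explicit use of $C_2^2$, $C_1^{-2}$ is the cleaner bookkeeping). Your remark that the argument hinges on $\chi_1=\chi-\gamma^2\beta>0$ is exactly the hypothesis the paper imposes in the introduction.
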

\begin{proof}
Young's inequality yields
\begin{equation}
\beta\|\gamma v_x-p_x\|^2\leq  \max(2\beta \gamma^2,2\beta)\left(\|v_x\|^2_{L^{2}(0,\,\ell_{2})}+\|p_x\|^2_{L^{2}(0,\,\ell_{2})}\right).
\end{equation}
Then, the right hand side of \eqref{NormsEquiva} is derived with $C_2=\max\left(1, \rho, \mu, \mathfrak{C}, \chi_1+2\beta \max(\gamma^2,1)\right)$.
Next, we have that
\begin{equation*}
\|p_x\|^2_{L^{2}(0,\,\ell_{2})}\leq 2\|\gamma v_x-p_x\|^2_{L^{2}(0,\,\ell_{2})}+2\|v_x\|^2_{L^{2}(0,\,\ell_{2})}\leq 2\max\left(\frac{1}{\beta},\frac{\gamma^2}{\chi_1}\right)\left(\beta \|\gamma v_x-p_x\|^2_{L^{2}(0,\,\ell_{2})}+\chi_1\|v_x\|^2_{L^{2}(0,\,\ell_{2})} \right).
\end{equation*}
Thus, by utilizing the above inequality in \eqref{norms}, we obtain the left hand side of \eqref{NormsEquiva} with
$$C_1=\frac{1}{\max\left(1,\frac{1}{\rho}, \frac{1}{\mu},\frac{1}{\mathfrak{C}}, 2\max\left(\frac{1}{\beta},\frac{\gamma^2}{\chi_1}\right)\right)}.$$
\end{proof}

\noindent Now, we wish to transform the initial boundary value problem
\eqref{209} to an abstract problem in the Hilbert space
$\mathcal{H}.$ We introduce the functions $v_{t} = \textsf{V},\,$ $p_{t} = \textsf{P},$  and rewrite the system \eqref{209} as the following initial value
problem
\begin{eqnarray}
\label{403}\frac{d}{dt}\mathbb{U}(t) = \mathcal{\mathcal{ A}}\mathbb{U}(t),\quad \mathbb{U}(0) = \mathbb{U}_{0},\quad
\forall\, t > 0,
\end{eqnarray}
where $\mathbb{U}=(z,\,v,\,\textsf{V},\,p,\,\textsf{P},\,\varphi)^{T}\ $ and
$\ \mathbb{U}_{0}=(z_{0},\,v_{0},\,v_{1},\,p_{0},\,p_{1},\,0)^{T},
$ and the operator $\,{\mathcal{ A}}:\mathcal{D}({\mathcal{ A}})\subset
\mathcal{H}\rightarrow \mathcal{H}$ is given by
\begin{equation}
\label{404}\mathcal{\mathcal{ A}}\left(
\begin{array}{c}
z \\
\\
v \\
\\
\textsf{V} \\
\\
\textsf{p} \\
\\
\textsf{P} \\
\\
\varphi 
\end{array}
\right) = \left(
\begin{array}{c}
\kappa z_{xx} \\
\\
\textsf{V} \\ 
\\
\frac{1}{\rho}\left( \chi v_{xx} - \gamma\beta p_{xx} - \mathfrak{C}\displaystyle\int_{\mathbb{R}}\mu(\xi)\varphi(\xi)d\xi\right) \\
 \\
\textsf{P} \\
\\
\frac{1}{\mu}\left(\beta p_{xx} - \gamma\beta v_{xx}\right) \\
\\
-(|\xi|^{2} + \eta)\varphi(\xi) + \mu(\xi)\textsf{V}
\end{array}
\right)
\end{equation}
with the domain
\begin{equation*}
\mathcal{D}({\mathcal{ A}})=
\left\{
\begin{array}{ll}
(z,\,v,\,\textsf{V},\,p,\,\textsf{P},\,\varphi)^{T}\in {\mathcal{ H}}:\, z\in H_{L}^{1}(-\ell_{1},\,0)\cap H^{2}(-\ell_{1},\,0),\ 
v,\,p\in H_{R}^{1}(0,\,\ell_{2})\cap H^{2}(0,\,\ell_{2}), \\[0.1in]

\textsf{V},\,\textsf{P}\in H_{R}^{1}(0,\,\ell_{2}), \quad |\xi|\varphi\in L^{2}(\mathbb{R};\,L^{2}(0,\,\ell_{2})),\,-(|\xi|^{2} + \eta)\varphi + \mu(\xi)\textsf{V}\in
L^{2}\left(\mathbb{R};\,L^{2}(0,\,\ell_{2})\right)\\[0.1in]
z(0)=V(0), \, \chi v_x(0)-\gamma\beta p_x(0)=\kappa z_x(0), 
\beta p_x(0)=\gamma\beta v_x(0).
\end{array}
\right\}
\end{equation*}

\noindent Now, we consider the following technical lemmas. Lemma \ref{L2.3} will be used for well-posedness and Lemma \ref{L2.4} will be used in the proof of strong stability. The proofs of these two lemmas are similar to Lemmas 2.1 and 2.2 in  \cite{AkilMCRFIssa}, for this reason we omit their proofs.
\begin{Lemma}(see Lemma 2.1 in \cite{AkilMCRFIssa}).
\label{L2.3}
If $0 < \alpha < 1$ and $\eta\ge 0,$ then 
\begin{align*}	
C(\alpha,\,\eta):=\displaystyle\int_{\mathbb{R}}\dfrac{|\xi|^{2\alpha - 1}}{|\xi|^{2} + \eta + 1}d\xi < +\infty
\qquad {\rm and} \qquad D(\alpha,\,\eta):=\displaystyle\int_{\mathbb{R}} \dfrac{|\xi|^{2\alpha - 1}}{(|\xi|^2 + \eta + 1)^{2}}d\xi < +\infty.
	\end{align*}
\end{Lemma}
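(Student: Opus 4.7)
The plan is a direct integrability check by splitting the domain. Since both integrands are even in $\xi$, it suffices to show finiteness of $\int_0^\infty \xi^{2\alpha-1}/(\xi^2+\eta+1)\,d\xi$ and $\int_0^\infty \xi^{2\alpha-1}/(\xi^2+\eta+1)^2\,d\xi$. I would split each of these at $\xi=1$ and treat the two pieces with elementary bounds.

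On the interval $(0,1)$, the hypothesis $\eta\geq 0$ gives $\xi^2+\eta+1\geq 1$, so both integrands are dominated by $\xi^{2\alpha-1}$. The assumption $\alpha>0$ gives $2\alpha-1>-1$, hence $\int_0^1 \xi^{2\alpha-1}\,d\xi=\frac{1}{2\alpha}<\infty$. On $(1,\infty)$, I bound the denominator from below by $\xi^2+\eta+1\geq \xi^2$, which yields the pointwise estimates
\begin{equation*}
\frac{\xi^{2\alpha-1}}{\xi^2+\eta+1}\leq \xi^{2\alpha-3},\qquad \frac{\xi^{2\alpha-1}}{(\xi^2+\eta+1)^2}\leq \xi^{2\alpha-5}.
\end{equation*}
The assumption $\alpha<1$ gives $2\alpha-3<-1$, so $\int_1^\infty \xi^{2\alpha-3}\,d\xi$ converges, and the bound for $D(\alpha,\eta)$ is even more favorable since $2\alpha-5<-3$. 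Combining the two pieces yields finiteness of both $C(\alpha,\eta)$ and $D(\alpha,\eta)$.

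There is no genuine obstacle in this argument; it is a textbook tail/origin analysis. The only point worth emphasizing is that both endpoints of the hypothesis $0<\alpha<1$ are used in an essential way: $\alpha>0$ ensures integrability at the origin, while $\alpha<1$ ensures decay at infinity. One could alternatively compute the integrals in closed form via the substitution $u=\xi^2/(\eta+1)$, which would express them in terms of the Beta function, but for the qualitative finiteness statement needed here, the splitting argument above is both shorter and sharper in its use of the hypotheses.
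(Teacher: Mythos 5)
Your proof is correct; the paper itself omits the proof of this lemma, deferring to Lemma 2.1 of the cited reference, and your splitting argument (integrability at the origin from $2\alpha-1>-1$, decay at infinity from $2\alpha-3<-1$) is exactly the standard comparison the paper uses implicitly for the analogous integrals $\mathfrak{J}_0$, $\mathfrak{J}_1$, $\mathfrak{J}_2$ later in the text. Nothing is missing.
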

\noindent

\begin{Lemma}(see Lemma 2.2 in \cite{AkilMCRFIssa}). 
\label{L2.4}
Let $0<\alpha<1.$ If $\eta>0$ and $\lambda\in \mathbb{R},$ or if $\eta = 0$ and $\lambda>0,$ then 
\begin{align*}	
E(\lambda,\,\alpha,\,\eta):= \displaystyle\int_{\mathbb{R}}\dfrac{|\xi|^{2\alpha - 1}\,d\xi}{|\xi|^{2} + \eta + i\lambda} < +\infty. 
	\end{align*}
Furthermore, for $h\in L^{2}(\mathbb{R};\,L^{2}(0,\,L)),$ we have that 
\begin{align*}
H(x,\,\lambda,\,\alpha,\,\eta):= \displaystyle\int_{\mathbb{R}}\dfrac{|\xi|^{\frac{2\alpha - 1}{2}}h(x,\,\xi)\,d\xi}{|\xi|^{2} +\eta + i\lambda}\in L^2(0,\,L). 
\end{align*}
\end{Lemma}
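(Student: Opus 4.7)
The plan is to prove both assertions by standard integrability analysis combined with a Cauchy--Schwarz estimate and Fubini's theorem. The key observation is that under the two alternative hypotheses the complex number $\eta+i\lambda$ is nonzero; setting $\delta:=\sqrt{\eta^2+\lambda^2}>0$, one has the uniform lower bound $\bigl||\xi|^2+\eta+i\lambda\bigr|=\sqrt{(|\xi|^2+\eta)^2+\lambda^2}\ge \delta$ for all $\xi\in\mathbb{R}$, so the only integrability concerns come from the mild singularity of $|\xi|^{2\alpha-1}$ at the origin and the decay at infinity, both of which are dictated by $0<\alpha<1$.

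For the finiteness of $E(\lambda,\alpha,\eta)$, I would split the integral over $|\xi|\le 1$ and $|\xi|\ge 1$. On $|\xi|\le 1$, using $\bigl||\xi|^2+\eta+i\lambda\bigr|\ge \delta$, the modulus of the integrand is bounded by $\delta^{-1}|\xi|^{2\alpha-1}$, which is integrable since $2\alpha-1>-1$. On $|\xi|\ge 1$, using the trivial bound $\bigl||\xi|^2+\eta+i\lambda\bigr|\ge |\xi|^2$, the tail is dominated by $|\xi|^{2\alpha-3}$, integrable because $2\alpha-3<-1$. Adding the two contributions gives a finite constant $E(\lambda,\alpha,\eta)$ depending only on $(\lambda,\alpha,\eta)$.

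For the second assertion, I would apply the Cauchy--Schwarz inequality in $\xi$ to obtain, for each fixed $x\in(0,L)$,
\[
|H(x,\lambda,\alpha,\eta)|^{2}\le \left(\int_{\mathbb{R}}\frac{|\xi|^{2\alpha-1}}{\bigl||\xi|^{2}+\eta+i\lambda\bigr|^{2}}\,d\xi\right)\int_{\mathbb{R}}|h(x,\xi)|^{2}\,d\xi.
\]
The first factor is finite by the same splitting argument (near $0$ bounded by $\delta^{-2}|\xi|^{2\alpha-1}$, at infinity by $|\xi|^{2\alpha-5}$, both integrable since $0<\alpha<1$) and is independent of $x$. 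Integrating the pointwise inequality over $x\in(0,L)$ and exchanging the $x$- and $\xi$-integrals by Fubini--Tonelli yields
\[
\|H(\cdot,\lambda,\alpha,\eta)\|_{L^{2}(0,L)}^{2}\le C(\lambda,\alpha,\eta)\,\|h\|_{L^{2}(\mathbb{R};\,L^{2}(0,L))}^{2}<+\infty,
\]
which is the required conclusion.

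The main obstacle is not analytical depth but careful bookkeeping of the two disjoint parameter regimes: the hypothesis $(\eta=0,\ \lambda>0)$ is needed precisely because otherwise $|\xi|^{2}+\eta+i\lambda$ would vanish at $\xi=0$ and the factor $|\xi|^{2\alpha-1}$ in the numerator would cease to compensate; conversely, when $\eta>0$ the sign of $\lambda$ is irrelevant. Once the uniform lower bound $\delta>0$ is established in each regime, the rest is routine exponent counting that mirrors the argument given in Lemma~\ref{L2.3}, which is why the authors refer to \cite{AkilMCRFIssa} and omit the details.
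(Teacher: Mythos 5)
Your proof is correct and is essentially the standard argument (uniform lower bound $\bigl||\xi|^{2}+\eta+i\lambda\bigr|\ge\sqrt{\eta^{2}+\lambda^{2}}>0$ near the origin, the bound $\ge|\xi|^{2}$ at infinity, then Cauchy--Schwarz in $\xi$ for the second claim); the paper itself omits the proof and defers to Lemma 2.2 of \cite{AkilMCRFIssa}, which proceeds along the same lines. Your exponent counting and the remark on why $(\eta,\lambda)=(0,0)$ must be excluded are both accurate, so nothing further is needed.
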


\begin{Proposition}
\label{lu}
The operator ${\mathcal{ A}}$ is the infinitesimal generator of a
contraction semigroup $\{{\mathcal{ S}}_{\mathcal{\mathcal{ A}}}(t)\}_{t\geq 0}.$
\end{Proposition}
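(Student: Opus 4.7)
The plan is to invoke the Lumer--Phillips theorem, so I need to verify (i) that $\mathcal{A}$ is dissipative on $\mathcal{D}(\mathcal{A})$ and (ii) that $I-\mathcal{A}$ is surjective onto $\mathcal{H}$. Once these are established, the density of $\mathcal{D}(\mathcal{A})$ in $\mathcal{H}$ is immediate from the tensor structure of the space.

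For dissipativity, I would compute $\Re\langle\mathcal{A}\mathbb{U},\mathbb{U}\rangle_{\mathcal{H}}$ directly, mimicking the energy calculation already carried out in Lemma \ref{lemma201}. Integration by parts in the $z$-, $v$- and $p$-components produces boundary terms at $x=0$, namely $-\kappa\,\Re(z_x(0)\overline{z}(0))$, $\chi\,\Re(v_x(0)\overline{\textsf{V}}(0))$, $-\gamma\beta\,\Re(p_x(0)\overline{\textsf{V}}(0))$, $\beta\,\Re(p_x(0)\overline{\textsf{P}}(0))$ and $-\gamma\beta\,\Re(v_x(0)\overline{\textsf{P}}(0))$. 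The three transmission conditions $z(0)=\textsf{V}(0)$, $\chi v_x(0)-\gamma\beta p_x(0)=\kappa z_x(0)$ and $\beta p_x(0)=\gamma\beta v_x(0)$ are exactly what makes these terms cancel; the identity $\chi=\chi_1+\gamma^2\beta$ is then used to absorb the $v_x$-$p_x$ cross term into $\beta\|\gamma v_x-p_x\|^2$. The $\varphi$-equation contributes the term $\mathfrak{C}\int_{\mathbb{R}}\mu(\xi)\overline{\textsf{V}}\varphi\,d\xi$, which exactly cancels the one coming from the $v$-equation, and the remaining diagonal piece gives $-\mathfrak{C}\int_{\mathbb{R}}(|\xi|^2+\eta)|\varphi|^2\,d\xi$. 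The outcome
\[
\Re\langle\mathcal{A}\mathbb{U},\mathbb{U}\rangle_{\mathcal{H}}
= -\kappa\,\|z_x\|_{L^2(-\ell_1,0)}^2
-\mathfrak{C}\!\int_0^{\ell_2}\!\!\int_{\mathbb{R}}(|\xi|^2+\eta)|\varphi(x,\xi)|^2\,d\xi\,dx\le 0
\]
establishes dissipativity.

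For surjectivity I would fix $F=(f_1,f_2,f_3,f_4,f_5,f_6)\in\mathcal{H}$ and solve $\mathbb{U}-\mathcal{A}\mathbb{U}=F$. The second and fourth lines give $\textsf{V}=v-f_2$, $\textsf{P}=p-f_4$, and the sixth line is algebraic in $\varphi$:
\[
\varphi(x,\xi)=\frac{\mu(\xi)\,\bigl(v(x)-f_2(x)\bigr)+f_6(x,\xi)}{|\xi|^2+\eta+1}.
\]
Lemma \ref{L2.3} shows that this $\varphi$ lies in $L^2(\mathbb{R};L^2(0,\ell_2))$ and that the integral $\mathfrak{C}\int_{\mathbb{R}}\mu(\xi)\varphi\,d\xi$ reduces to $\mathfrak{C}\,C(\alpha,\eta)\,(v-f_2)$ plus a source term lying in $L^2(0,\ell_2)$. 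Substituting back yields a coupled elliptic system in $(z,v,p)$ with unknowns in the closed affine subspace
\[
\mathcal{W}=\bigl\{(z,v,p)\in H_L^1(-\ell_1,0)\times H_R^1(0,\ell_2)\times H_R^1(0,\ell_2)\,:\,z(0)=v(0)-f_2(0)\bigr\}.
\]
After shifting by a fixed lift that realises the trace condition, this becomes a variational problem on the linear subspace $\{z(0)=v(0)\}$, with bilinear form
\[
a\bigl((z,v,p),(\tilde z,\tilde v,\tilde p)\bigr)=\int_{-\ell_1}^{0}\!\!\bigl(z\tilde z+\kappa z_x\tilde z_x\bigr)+\int_0^{\ell_2}\!\!\Bigl[(\rho+\mathfrak{C}C(\alpha,\eta))v\tilde v+\mu p\tilde p+\chi v_x\tilde v_x-\gamma\beta(v_x\tilde p_x+p_x\tilde v_x)+\beta p_x\tilde p_x\Bigr].
\]
Writing $\chi=\chi_1+\gamma^2\beta$ shows $\chi\|v_x\|^2-2\gamma\beta(v_x,p_x)+\beta\|p_x\|^2=\chi_1\|v_x\|^2+\beta\|\gamma v_x-p_x\|^2$, so $a$ is coercive (and clearly continuous) on $\mathcal{W}$. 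Lax--Milgram produces the unique solution; integrating by parts in the weak formulation with suitably chosen test functions recovers the transmission conditions $\chi v_x(0)-\gamma\beta p_x(0)=\kappa z_x(0)$ and $\beta p_x(0)=\gamma\beta v_x(0)$ as natural boundary conditions, and elliptic regularity upgrades $z,v,p$ to $H^2$, so $\mathbb{U}\in\mathcal{D}(\mathcal{A})$.

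The routine parts are the integration by parts and the Lax--Milgram verification. The step that will require the most care is the correct encoding of the coupled boundary data: the condition $z(0)=\textsf{V}(0)$ is essential, while the other two transmission conditions must emerge as natural ones from the chosen test spaces. Making sure the lifting of $f_2(0)$ preserves the structure of the bilinear form, and that the $\varphi$-contribution genuinely lands in $L^2$ (which is guaranteed by $C(\alpha,\eta)<\infty$ from Lemma \ref{L2.3}), is the only delicate point.
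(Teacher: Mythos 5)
Your proposal follows essentially the same route as the paper: Lumer--Phillips, the same energy-type computation for dissipativity with the transmission conditions cancelling the boundary terms at $x=0$, the same algebraic elimination of $\textsf{V}$, $\textsf{P}$ and $\varphi$ via Lemma \ref{L2.3}, and the same Lax--Milgram argument with test functions matched at the interface followed by elliptic regularity. Your explicit treatment of the affine trace condition $z(0)=v(0)-f_2(0)$ via a lifting is a point the paper glosses over, but it does not change the argument.
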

\proof  We will show that ${\mathcal{ A}}$ is a dissipative operator and that 
$(I - {\mathcal{ A}})$ is surjective. Then,  the result will
 follow using the well known Lumer-Phillips
Theorem (see \cite{Pazy}). We observe that if $\mathbb{U} = (z,\,v,\,\textsf{V},\,p,\,\textsf{P},\,\varphi)^{T}\in {\mathcal{D}}({\mathcal{ A}}),$ then
using \eqref{209} and \eqref{210} we get
\begin{align}
\label{405}Re\left\langle{\mathcal{ A}}\mathbb{U},\,\mathbb{U}\right\rangle_{\mathcal{H}} = - \kappa\int_{-\ell_{1}}^{0}|z_x|^{2} dx
- \mathfrak{C}\int_{\mathbb{R}}(|\xi|^{2} +
\eta)\|\varphi(x,\,\xi)\|_{L^{2}(0,\,\ell_{2})}^{2}d\xi \leq 0.
\end{align}
In fact, let $\mathbb{U} = (z,\,v,\,\textsf{V},\,p,\,\textsf{P},\,\varphi)\in \mathcal{D}({\mathcal{A}}),$ then
\begin{align*}	
\langle {\mathcal{A}}\mathbb{U},\,\mathbb{U}\rangle_{{\mathcal{H}}} 
= &\ - \kappa\int_{-\ell_{1}}^{0}z_{x}\overline{z}_{x}dx + \chi\int_{0}^{\ell_{2}}\textsf{V}_{x}\overline{v}_{x}dx -\chi  \int_{0}^{\ell_{2}}{v}_{x}\overline{\textsf{V}_{x}} dx +\gamma\beta \int_{0}^{\ell_{2}}p_x\overline{\textsf{V}_{x}}dx\\
&-\mathfrak{C} \int_{0}^{\ell_{2}}\overline{\textsf{V}}\int_{\R}\mu(\xi)\varphi(x,\xi)d\xi dx 
- \beta \int_{0}^{\ell_{2}}p_x\overline{\textsf{P}_{x}}dx+\gamma\beta \int_{0}^{\ell_{2}}v_x\overline{\textsf{P}_{x}}dx+\gamma^2\beta  \int_{0}^{\ell_{2}}\textsf{V}_{x} \overline{v_x}dx\\
&-\gamma\beta \int_{0}^{\ell_{2}}\textsf{V}_{x} \overline{p_x}dx-\gamma\beta \int_{0}^{\ell_{2}}\textsf{P}_{x} \overline{v_x}dx+\beta \int_{0}^{\ell_{2}}\textsf{P}_{x} \overline{p_x}dx\\
&+\mathfrak{C} \ \int_{0}^{\ell_{2}}\int_{\mathbb{R}}[-(|\xi|^{2} + \eta)\varphi(x, \xi) + \mu(\xi)\textsf{V}]\overline{\varphi}(x,\xi)d\xi dx.
\end{align*}
Hence,
\begin{align*}
&\langle{\mathcal{A}}\mathbb{U},\,\mathbb{U}\rangle_{{\mathcal{H}}} = - \kappa\int_{-\ell_1}^0 |z_x|^2dx + 2i\chi Im\int_{0}^{\ell_{2}}\textsf{V}_{x}\overline{v}_{x}dx + 2i\beta Im\int_{0}^{\ell_{2}}\textsf{P}_{x}\overline{p}_{x}dx  \\
& + 2i\gamma\beta Im\int_{0}^{\ell_{2}}\left(\textsf{V}_x\overline{p_x}+\textsf{P}_x\overline{v_x}\right) + 2i\mathfrak{C}Im\int_{0}^{\ell_{2}}\overline{\textsf{V}}\int_{\mathbb{R}}\mu(\xi)\varphi(x, \xi)d\xi dx \\
& - \mathfrak{C} \int_{0}^{\ell_{2}}\int_{\mathbb{R}}(|\xi|^{2} + \eta)|\varphi(x, \xi)|^{2}d\xi dx.
\end{align*}
Taking the real part
\begin{align*}
Re\langle {\mathcal{A}}\mathbb{U},\,\mathbb{U}\rangle_{{\mathcal{H}}} = - \kappa\int_{-\ell_1}^0 |z_x|^2dx - \mathfrak{C} \int_{0}^{\ell_{2}}\int_{\mathbb{R}}(|\xi|^{2} + \eta)|\varphi(x, \xi)|^{2}d\xi dx\leq 0,
\end{align*}
which implies that $\mathcal{A}$ is dissipative.  
\noindent
Next, to use the Lumer-Phillips Theorem, it is sufficient to show that $(I - {\mathcal{A}})\mathbb{U} = \mathbb{F}.$ Thus, given $\mathbb{F} = (f_{1},\,f_{2},\,f_{3},\,f_{4},\,f_{5},\,f_{6}(x,\xi))^{T}\in \mathcal{H}$, we need to show that there is some vector  $\mathbb{U}=(z,\,v,\,\textsf{V},\,p,\,\textsf{P},\,\varphi)^{T}\in {\mathcal{D}}({\mathcal{A}})$ such that  $(I - {\mathcal{A}})\mathbb{U} = \mathbb{F}.$ That is, 
\begin{align}
\label{406}
\begin{cases}
z - \kappa z_{xx} = f_{1}  \\
v - \textsf{V} = f_{2}, \\
\rho \textsf{V} - \chi v_{xx} + \gamma\beta p_{xx} + \mathfrak{C}\displaystyle\int_{\mathbb{R}}\mu(\xi)\varphi(\xi)d\xi = \rho f_{3}, \\	
p - \textsf{P} = f_{4}, \\
\mu \textsf{P}  - \chi p_{xx} + \gamma\beta v_{xx} =\mu  f_{5}    \\ 
\varphi(\xi) + (|\xi|^{2} + \eta)\varphi(\xi) - \mu(\xi)\textsf{V} = f_{6}(x,\xi). 
\end{cases}
\end{align}
From \eqref{406}$_{2}$ and \eqref{406}$_{4}$,  we get 
\begin{equation}
\label{407}
\begin{cases}
\textsf{V} = v - f_{2}, \\ 
\textsf{P} = p - f_{4}.
\end{cases}
\end{equation} 
On the other hand, from the last equation in \eqref{406}$_{6},$ we obtain 
\begin{align*}
\label{408}\varphi(x,\xi) = \dfrac{f_{6}(x, \xi)}{|\xi|^{2} + \eta + 1} - \dfrac{\mu(\xi)f_{2}}{|\xi|^{2} + \eta + 1} + \dfrac{\mu(\xi)v}{|\xi|^{2} + \eta + 1}. 
\end{align*}
Multiplying the above equation by $\mu(\xi)$ and using  Lemma \ref{L2.3},  we get 
\begin{eqnarray*}
\label{409}
\mathfrak{C}\int_{\mathbb{R}}\mu(\xi)\varphi(x,\xi)d\xi
& = & \mathfrak{C}\left[\int_{\mathbb{R}}\dfrac{\mu(\xi)f_{6}(x,\xi)}{|\xi|^{2} + \eta + 1}d\xi + C(\alpha,\,\eta)(v - f_{2})\right] \nonumber \\
& = & \mathfrak{C}C(\alpha,\,\eta)(v - f_{2}) + \mathfrak{C}\int_{\mathbb{R}}\dfrac{\mu(\xi)f_{6}(x,\xi)}{|\xi|^{2} + \eta + 1}d\xi.
\end{eqnarray*}
Inserting \eqref{407}$_{1}$ into \eqref{406}$_{3}$ and usingthe above equality, we have
\begin{equation}
\label{410}
\rho v - \rho f_{2} - \chi v_{xx} + \gamma\beta p_{xx} + \mathfrak{C}C(\alpha,\,\eta)(v - f_{2}) + \mathfrak{C}\int_{\mathbb{R}}\dfrac{\mu(\xi)f_{6}(x,\xi)d\xi}{|\xi|^{2} + \eta + 1} = \rho f_{3}.
\end{equation}
In a similar way, inserting \eqref{407}$_{2}$ into \eqref{406}$_{5}$,  we get
\begin{equation}
\label{411}
\mu p -\mu  f_{4} - \beta p_{xx} + \gamma\beta v_{xx} =\mu  f_{5}.
\end{equation}
Multiplying equations \eqref{406}$_{1}$ by $\overline{\Gamma}\in H_{L}^{1}(-\ell_{1},\,0)$ and \eqref{410}, \eqref{411} by $\overline{\Theta},\,\overline{\Upsilon}\in [H_{R}^{1}(0,\,\ell_{2})]^{2}$ respectively such that $\Gamma(0)=\Theta(0)$, integrating over $x$ and performing integration by parts, one obtains the following 
\begin{equation}
\label{415}
\mathcal{B}((z,\,v,\,p),\,(\Gamma,\,\Theta,\,\Upsilon)) = \mathcal{L}(\Gamma,\,\Theta,\,\Upsilon),
\end{equation}
where $\mathcal{B}:\,[H_{L}^{1}(-\ell_{1},\,0)\times H_{R}^{1}(0,\,\ell_{2})\times H_{R}^{1}(0,\,\ell_{2})]\times [H_{L}^{1}(-\ell_{1},\,0)\times H_{R}^{1}(0,\,\ell_{2})\times H_{R}^{1}(0,\,\ell_{2})]\rightarrow\mathbb{R}$ is the sesquilinear form defined by
\begin{align*}
& \mathcal{B}((z,\,v,\,p),\,(\Gamma,\,\Theta,\,\Upsilon)) \\		
= &\displaystyle \int_{-\ell_{1}}^{0}z\Gamma dx+\kappa \int_{-\ell_{1}}^{0}z_x\Gamma_xdx+\ C_{1}\int_{0}^{\ell_{2}}v\Theta dx + \chi\displaystyle\int_{0}^{\ell_{2}}v_{x}\Theta_{x}dx - \gamma\beta\int_{0}^{\ell_{2}}p_{x}\Theta_{x}dx + 	\displaystyle \mu\int_{0}^{\ell_{2}}p\Upsilon dx \\
&\displaystyle+ \beta\displaystyle\int_{0}^{\ell_{2}}p_{x}\Upsilon_{x}dx - \gamma\beta\displaystyle\int_{0}^{\ell_{2}}v_{x}\Upsilon_{x}dx,
\end{align*}
and $\mathcal{L}:\,[H_{L}^{1}(-\ell_{1},\,0)\times H_{R}^{1}(0,\,\ell_{2})\times H_{R}^{1}(0,\,\ell_{2})]\rightarrow\mathbb{R}$ is the linear form defined by
\begin{center}
$\mathcal{L}(\Gamma,\,\Theta,\,\Upsilon) = \displaystyle\int_{-\ell_{1}}^{0}f_{1}\Gamma dx + \int_{0}^{\ell_{2}}F_{1}\Theta dx + \displaystyle\int_{0}^{\ell_{2}}F_{2}\Upsilon dx - \mathfrak{C}\int_{0}^{\ell_{2}}\Theta\displaystyle\int_{\mathbb{R}}\dfrac{\mu(\xi)f_{6}(\xi)d\xi}{|\xi|^{2} + \eta + 1}dx.$
\end{center}
where $C_{1} = \rho  + \mathfrak{C}C(\alpha,\,\eta),$ \ $F_{1} = (\rho + \mathfrak{C}C(\alpha,\,\eta))f_{2} + \rho f_{3}$\ and\ $F_{2} =\mu( f_{4} + f_{5}).$ \\
It is easy to observe that $\mathcal{B}$ is a continuous and coercive sesquilinear form. On the other hand, 
\begin{center}
$\left|\mathfrak{C}\displaystyle\int_{0}^{\ell_2}\Theta\displaystyle\int_{\mathbb{R}}\dfrac{\mu(\xi)f_{6}(\xi)d\xi}{|\xi|^{2} + \eta + 1}dx\right| \le \mathfrak{C}\sqrt{D(\alpha,\,\eta)}\,\|\Theta\|_{L^2(0,\ell_2)}\|f_{6}\|_{L^{2}(\mathbb{R};\,L^{2}(0,\,\ell_{2}))}.$
\end{center}
It follows that $\mathcal{L}$ is a bounded linear functional. 
Then, using Lax-Milgram's theorem, we deduce the existence of a unique solution $(z,\,v,\,p)\in H_{L}^{1}(-\ell_{1},\,0)\times H_{R}^{1}(0,\,\ell_{2})\times H_{R}^{1}(0,\,\ell_{2})$ to the variational problem \eqref{415}. By elliptic regularity, it follows that $z\in H_{L}^{1}(-\ell_{1},\,0)\cap H^{2}(-\ell_{1},\,0)$ and $v,\,p\in H_{R}^{1}(0,\,\ell_{2})\cap H^{2}(0,\,\ell_{2}).$ Now, define $V$ and $P$ as  \eqref{407}, then we have $V,\,P\in H_{R}^{1}(0,\,\ell_{2}).$ Finally, $f_{6}\in L^{2}(\mathbb{R};\,L^{2}(0,\,\ell_{2})),$ then defining $\varphi(x, \xi)$ and using the respective expressions given in \eqref{408}, it is evident that  $|\xi|\varphi\in L^{2}(\mathbb{R};\,L^{2}(0,\,\ell_{2}))$ and $-(|\xi|^{2} + \eta)\varphi + \mu(\xi)V\in L^{2}(\mathbb{R};\,L^{2}(0,\,\ell_{2})).$ \\
Therefore using Proposition \ref{lu}, the system \eqref{209} is well-posed in the energy space ${\mathcal{H}}$ and we have the following theorem:
\begin{Theorem}(Existence and Uniqueness of solutions)
\label{exis}
If $\,(z_{0},\,v_{0},\,v_{1},\,p_{0},\,p_{1},\,0)\in {\mathcal{H}},$ the problem \eqref{209} admits a unique weak solution
\begin{equation*}
(z,\,v,\,v_{t},\,p,\,p_{t},\,\varphi) \in C\left([0,\,+\infty);\,{\mathcal{H}}\right),
\end{equation*}
and for $(z_{0},\,v_{0},\,v_{1},\,p_{0},\,p_{1},\,0)\in {\mathcal{ D}}({\mathcal{A}})$, the problem \eqref{209} admits a unique strong solution
\begin{equation*}
(z,\,v,\,v_{t},\,p,\,p_{t},\,\varphi)\in C\left([0,\,+\infty);\,{\mathcal{D}}({\mathcal{A}})\right)\cap C^{1}\left([0,\,+\infty);\,{\mathcal{H}}\right).
\end{equation*}
\end{Theorem}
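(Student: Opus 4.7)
The plan is short because the heavy lifting has already been absorbed into Proposition \ref{lu}: once we know that $\mathcal{A}$ generates a $C_0$-semigroup of contractions $\{\mathcal{S}_{\mathcal{A}}(t)\}_{t\ge 0}$ on $\mathcal{H}$, the existence and uniqueness statements are a direct application of the standard theory of the abstract Cauchy problem, for instance Theorem 1.3, Chapter 4 in Pazy. So the proof I would write is essentially an invocation of that theory, organized in two steps corresponding to the two regularity classes in the statement.

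For the weak (mild) solution, given $\mathbb{U}_0=(z_0,v_0,v_1,p_0,p_1,0)^T\in\mathcal{H}$, I would define
\[
\mathbb{U}(t):=\mathcal{S}_{\mathcal{A}}(t)\mathbb{U}_0,\qquad t\ge 0,
\]
and use the strong continuity of $\mathcal{S}_{\mathcal{A}}(\cdot)$ to conclude that $\mathbb{U}\in C([0,+\infty);\mathcal{H})$. Uniqueness follows either from the uniqueness of the semigroup generated by $\mathcal{A}$, or equivalently from the dissipativity inequality \eqref{405}: if $\mathbb{U}_1,\mathbb{U}_2$ are two solutions of \eqref{403} with the same initial datum, then $\mathbb{W}=\mathbb{U}_1-\mathbb{U}_2$ satisfies $\tfrac{d}{dt}\|\mathbb{W}\|_{\mathcal{H}}^2=2\operatorname{Re}\langle\mathcal{A}\mathbb{W},\mathbb{W}\rangle_{\mathcal{H}}\le 0$, so $\mathbb{W}\equiv 0$.

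For the strong solution, I would invoke the classical improvement: if additionally $\mathbb{U}_0\in\mathcal{D}(\mathcal{A})$, then $\mathcal{S}_{\mathcal{A}}(t)\mathbb{U}_0\in\mathcal{D}(\mathcal{A})$ for every $t\ge 0$, the map $t\mapsto \mathcal{S}_{\mathcal{A}}(t)\mathbb{U}_0$ is continuously differentiable with values in $\mathcal{H}$, and $\tfrac{d}{dt}\mathbb{U}(t)=\mathcal{A}\mathbb{U}(t)$ holds pointwise. Moreover, since $\mathcal{A}$ is closed, continuity of $\mathcal{A}\mathbb{U}(t)$ in $\mathcal{H}$ together with continuity of $\mathbb{U}(t)$ in $\mathcal{H}$ yields continuity of $\mathbb{U}(t)$ in the graph norm of $\mathcal{A}$, i.e.\ $\mathbb{U}\in C([0,+\infty);\mathcal{D}(\mathcal{A}))\cap C^1([0,+\infty);\mathcal{H})$. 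Unwinding the components of $\mathbb{U}=(z,v,\textsf{V},p,\textsf{P},\varphi)^T$ and using $v_t=\textsf{V}$, $p_t=\textsf{P}$ then recovers the original formulation \eqref{209}.

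There is essentially no main obstacle at this stage: everything hard has already been discharged in Proposition \ref{lu} (dissipativity of $\mathcal{A}$ via the Lemma \ref{lemma201} energy identity, and surjectivity of $I-\mathcal{A}$ via the Lax-Milgram argument combined with Lemma \ref{L2.3} to handle the fractional integral). The only bookkeeping point worth flagging in the write-up is that the transmission conditions $z(0)=\textsf{V}(0)$, $\chi v_x(0)-\gamma\beta p_x(0)=\kappa z_x(0)$, and $\beta p_x(0)=\gamma\beta v_x(0)$ are encoded in $\mathcal{D}(\mathcal{A})$, so they are automatically satisfied pointwise in $t$ for the strong solution and in a trace/variational sense for the weak one.
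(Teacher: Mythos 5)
Your proposal is correct and follows essentially the same route as the paper, which likewise derives the theorem as an immediate consequence of Proposition \ref{lu} together with the standard semigroup theory of Pazy (the paper simply states ``Therefore using Proposition \ref{lu}, the system \eqref{209} is well-posed in the energy space $\mathcal{H}$''). Your write-up merely spells out the standard details (strong continuity for the mild solution, invariance of $\mathcal{D}(\mathcal{A})$ and closedness of $\mathcal{A}$ for the strong one) that the paper leaves implicit.
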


\section{Strong Stability}\label{strong stabilty section }
\noindent The aim of this section is to prove the strong stability of \eqref{209}.  The main result of this section is the following theorem. First, we state Arendt-Batty theorem which will be the base for the strong stability result.
\begin{Theorem}(Arendt-Batty \cite{Arendt-Batty})
\label{Arendt}
	Let ${\mathcal{A}}$ be the generator of a C$_{0}$-semigroup $({\mathcal{ S}}(t))_{t\ge 0}$ in a reflexive Banach space ${\it X.}$ If the following conditions are satisfied: 
	\begin{enumerate}
		\item [(i)] ${\mathcal{A}}$ has no purely imaginary eigenvalues;
		\item [(ii)] $\sigma({\mathcal{A}})\cap i\mathbb{R}$ is countable.
	\end{enumerate}
	Then, $({\mathcal{ S}}(t))_{t\ge 0}$ is strongly stable.
\end{Theorem}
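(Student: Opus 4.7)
The plan is to prove the Arendt--Batty (equivalently ABLV) theorem by passing from the continuous $C_0$-semigroup $({\mathcal S}(t))_{t\ge 0}$ to the discrete, power-bounded operator $T:={\mathcal S}(t_0)$ for some fixed $t_0>0$, and then invoking the Katznelson--Tzafriri Tauberian theorem for power-bounded operators. Strong stability is an orbitwise statement and in the intended application the semigroup is a contraction, so boundedness is automatic; if one wants a standalone statement on a reflexive Banach space $X$, one first reduces to a bounded semigroup by a standard renorming.

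First I would transfer the spectral hypotheses from $A$ to $T$. The exponential spectral inclusion gives $e^{t_0(\sigma(A)\cap i\mathbb{R})}\subseteq \sigma(T)\cap\mathbb{T}$, and in the converse direction any eigenvalue of $T$ on $\mathbb{T}$ lifts, modulo $\tfrac{2\pi i}{t_0}\mathbb{Z}$, to an eigenvalue of $A$ on $i\mathbb{R}$. Combined with hypotheses (i)--(ii), this forces $\sigma(T)\cap\mathbb{T}$ to be countable and devoid of eigenvalues of $T$.

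Next I would apply the Katznelson--Tzafriri theorem, which states that a power-bounded operator on a Banach space whose peripheral spectrum is countable and contains no eigenvalue satisfies $\|T^{n}(I-T)\|\to 0$. In the reflexive setting this uniform estimate, combined with the mean ergodic decomposition $X=\ker(I-T)\oplus \overline{\mathrm{ran}(I-T)}$ given by Yosida's ergodic theorem, gives $T^nx\to 0$ for every $x\in X$, since $\ker(I-T)=\{0\}$ by the absence of the eigenvalue $1$. Going back to the continuous parameter, $\|{\mathcal S}(nt_0)x\|\to 0$, and strong continuity plus boundedness on $[0,t_0]$ upgrades this to ${\mathcal S}(t)x\to 0$ as $t\to\infty$, which is strong stability.

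The main obstacle is the spectral step: the spectral mapping theorem can genuinely fail for $C_0$-semigroups, so one cannot simply read $\sigma(T)\cap\mathbb{T}$ off $\sigma(A)\cap i\mathbb{R}$ by taking exponentials. The clever part of the Arendt--Batty argument is to avoid invoking spectral mapping entirely and instead to perform a transfinite induction on the Cantor--Bendixson derivatives of the countable closed set $\sigma(A)\cap i\mathbb{R}$, reducing at each stage to isolated points where the absence of eigenvalues can be used locally. Reflexivity of $X$ is essential to close the induction via duality: weak limits of orbits, guaranteed by weak compactness of bounded sets in a reflexive space, actually produce eigenvectors of $A$ whenever nontrivial, contradicting (i). This is the delicate piece that I would either reproduce carefully or cite verbatim from \cite{Arendt-Batty}, since in the present paper the theorem is only invoked as a ready-made tool to verify strong stability of ${\mathcal S}_{\mathcal A}(t)$ by checking (i) and (ii) for the concrete operator ${\mathcal A}$.
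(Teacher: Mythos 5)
The paper does not prove this statement at all: Theorem \ref{Arendt} is imported verbatim from the cited reference of Arendt and Batty and used as a black box to deduce strong stability from conditions (i) and (ii). So the only question is whether your proposed proof stands on its own, and it does not: there is a genuine gap at its central step. The discretization $T={\mathcal S}(t_0)$ founders on the direction of the spectral inclusion theorem. One only has $e^{t_0\,\sigma({\mathcal A})}\subseteq\sigma(T)\setminus\{0\}$, a \emph{lower} bound on $\sigma(T)$; countability of $\sigma({\mathcal A})\cap i\mathbb{R}$ therefore gives no control whatsoever on $\sigma(T)\cap\mathbb{T}$, which can be uncountable (indeed all of $\mathbb{T}$) even when $\sigma({\mathcal A})\cap i\mathbb{R}$ is empty (Zabczyk's example). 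Your claim that the hypotheses ``force $\sigma(T)\cap\mathbb{T}$ to be countable'' is false, and you concede as much in your final paragraph — but then the entire reduction to a discrete operator collapses. Only the point-spectral mapping (eigenvalues of $T$ on $\mathbb{T}$ lifting, modulo $\tfrac{2\pi i}{t_0}\mathbb{Z}$, to eigenvalues of ${\mathcal A}$ on $i\mathbb{R}$) survives, and that is not enough.

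The second defect is that the Katznelson--Tzafriri theorem is misstated. Its hypothesis is $\sigma(T)\cap\mathbb{T}\subseteq\{1\}$ for a power-bounded $T$, and its conclusion $\|T^{n}(I-T)\|\to 0$ is in fact \emph{equivalent} to that spectral condition; for a general countable peripheral spectrum (say one containing $-1$) the conclusion simply fails, so the ergodic-decomposition step has nothing to feed on. The statement you actually need is the discrete Arendt--Batty--Lyubich--V\~{u} theorem (countable peripheral spectrum, no peripheral eigenvalues of the adjoint, conclude $T^{n}x\to 0$), whose proof is itself the transfinite induction on Cantor--Bendixson derivatives that you describe in your last paragraph — i.e., the very argument you were trying to avoid. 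As written, the proposal is a plan that you yourself identify as non-working, followed by a deferral to the original source; that is a legitimate way to \emph{use} the theorem (which is all the paper does), but it is not a proof of it.
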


\begin{Lemma}
\label{p4.2}
If $\eta = 0,$ then the operator ${\mathcal{A}}$ is not invertible and consequently $0\in \sigma({\mathcal{A}}).$
\end{Lemma}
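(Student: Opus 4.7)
The plan is to show that $0\in\sigma(\mathcal{A})$ by exhibiting an element $F\in\mathcal{H}$ that is not in the range of $\mathcal{A}$. The obstruction is expected to come entirely from the last (fractional) component of the system: when $\eta=0$, the equation for $\varphi$ reads $-|\xi|^{2}\varphi+\mu(\xi)\textsf{V}=f_{6}$, and the factor $|\xi|^{2}$ degenerates at $\xi=0$, so inverting it against a nonzero load is incompatible with $\varphi\in L^{2}(\mathbb{R};L^{2}(0,\ell_{2}))$.

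Concretely, I would choose $F=(0,f_{2},0,0,0,0)^{T}$ with a fixed nonzero $f_{2}\in H_{R}^{1}(0,\ell_{2})$, which clearly belongs to $\mathcal{H}$. Writing out $\mathcal{A}\mathbb{U}=F$ component by component (as in \eqref{406} but with the right-hand side being $F$ instead of $\mathbb{U}+F$), the second equation forces $\textsf{V}=f_{2}$, hence $\textsf{V}\not\equiv 0$ on $(0,\ell_{2})$. Inserting this into the last equation and using $\eta=0$ gives, for a.e.\ $\xi\neq 0$,
\begin{equation*}
\varphi(x,\xi)=\frac{\mu(\xi)\,\textsf{V}(x)}{|\xi|^{2}}=\frac{|\xi|^{\frac{2\alpha-1}{2}}f_{2}(x)}{|\xi|^{2}}.
\end{equation*}

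Now I would compute the would-be norm: by Fubini,
\begin{equation*}
\|\varphi\|_{L^{2}(\mathbb{R};L^{2}(0,\ell_{2}))}^{2}=\|f_{2}\|_{L^{2}(0,\ell_{2})}^{2}\int_{\mathbb{R}}|\xi|^{2\alpha-5}\,d\xi,
\end{equation*}
and observe that the $\xi$-integral diverges near the origin because $2\alpha-5<-1$ for every $\alpha\in(0,1)$. Since $f_{2}\not\equiv 0$, this forces $\varphi\notin L^{2}(\mathbb{R};L^{2}(0,\ell_{2}))$, so no candidate $\mathbb{U}\in\mathcal{D}(\mathcal{A})$ can satisfy $\mathcal{A}\mathbb{U}=F$.

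Consequently $\mathcal{A}$ fails to be surjective, hence it is not invertible, and therefore $0\in\sigma(\mathcal{A})$. The step I expect to require the most care is simply the bookkeeping near $\xi=0$: one must make sure that the forced expression for $\varphi$ is indeed the only possibility (which is immediate from the algebraic equation, since $|\xi|^{2}>0$ almost everywhere) and that the chosen $f_{2}$ genuinely lies in $H_{R}^{1}(0,\ell_{2})$ so that $F\in\mathcal{H}$; beyond that, the argument is a direct non-integrability computation and no deeper analytic machinery is needed.
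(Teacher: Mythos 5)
Your proposal is correct and follows essentially the same route as the paper: the authors also take $F=(0,f_2,0,0,0,0)^{T}$ (with the concrete choice $f_2=\sin(\pi x/L)$), read off $\textsf{V}=f_2$ from the second component, solve the last equation with $\eta=0$ to get $\varphi=|\xi|^{\frac{2\alpha-5}{2}}f_2$, and conclude $\varphi\notin L^{2}(\mathbb{R};L^{2}(0,\ell_{2}))$ since $2\alpha-5<-1$. Your version merely makes the non-integrability near $\xi=0$ and the resulting failure of surjectivity slightly more explicit.
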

\noindent
{\it Proof.} Let $\mathbb{F}_{0}=\left(0,\,\sin(\pi x/L),\,0,\,0,\,0,\,0\right)\in \mathcal{H}$ and assume there exists $\mathbb{U}_{0}=(z_{0},\,v_{0},\,\textsf{V}_{0},\,p_{0},\,\textsf{P}_{0},\,\varphi_{0})\in \mathcal{D}({\mathcal{A}})$ such that ${\mathcal{A}}\mathbb{U}_{0} = \mathbb{F}_{0}.$ In this case, $\varphi_{0}(x, \xi) = |\xi|^{\frac{2\alpha - 5}{2}}\sin(\pi x/L).$ However, $\varphi_{0}\notin L^{2}(\mathbb{R};\,L^{2}(0,\,\ell_{2}))$ for $0 < \alpha < 1.$ \quad $\square$

\begin{Remark}\label{Remark1A}
For the case when $\eta>0$, by using the same computations as in Proposition \ref{lu}, we prove that $-\mathcal{A}$ is m-dissipative operator and consequently $0\in \rho(\mathcal{A})$.
\end{Remark}

\begin{Theorem}\label{Strong1}
The $C_0$-semigroup of contractions $(e^{t\mathcal{A}})_{t\geq 0}$ is strongly stable in $\mathcal{H}$, i.e., for all $U_0\in \mathcal{H}$, the solution of  \eqref{403} satisfies $\displaystyle\mathcal{E}(t)\xrightarrow[t\to\infty] {} 0$.
\end{Theorem}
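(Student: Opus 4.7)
The plan is to apply the Arendt--Batty criterion (Theorem \ref{Arendt}). This reduces the problem to verifying two facts: (i) $\mathcal{A}$ admits no purely imaginary eigenvalue, and (ii) $\sigma(\mathcal{A})\cap i\mathbb{R}$ is countable. Because Remark \ref{Remark1A} tells us that $0\in \rho(\mathcal{A})$ when $\eta>0$ and Lemma \ref{p4.2} tells us that $0\in \sigma(\mathcal{A})$ when $\eta=0$, condition (ii) will reduce to showing $i\lambda\in\rho(\mathcal{A})$ for every $\lambda\in\mathbb{R}^{*}$, so that in either case $\sigma(\mathcal{A})\cap i\mathbb{R}\subseteq\{0\}$ is countable.

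First I would prove (i). Let $\mathbb{U}=(z,v,\textsf{V},p,\textsf{P},\varphi)^{T}\in\mathcal{D}(\mathcal{A})$ satisfy $\mathcal{A}\mathbb{U}=i\lambda\mathbb{U}$ with $\lambda\in\mathbb{R}$. Taking the real part of $\langle\mathcal{A}\mathbb{U},\mathbb{U}\rangle_{\mathcal{H}}=i\lambda\|\mathbb{U}\|_{\mathcal{H}}^{2}$ and using the dissipation identity \eqref{405} forces $z_{x}\equiv 0$ and $\varphi\equiv 0$ on their domains. The Dirichlet condition $z(-\ell_{1})=0$ then gives $z\equiv 0$, while the fourth row of $\mathcal{A}$ with $\varphi=0$ reduces to $\mu(\xi)\textsf{V}(x)=0$, whence $\textsf{V}\equiv 0$. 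Combining $\textsf{V}=i\lambda v$ with the third and fifth equations of $\mathcal{A}\mathbb{U}=i\lambda\mathbb{U}$ produces a short cascade: if $\lambda\neq 0$ we immediately get $v\equiv 0$, then $p_{xx}\equiv 0$, then $\textsf{P}\equiv 0$ and $p\equiv 0$ by $p(\ell_2)=0$. In the borderline case $\lambda=0$ the system collapses to $v_{xx}=0$ and $p_{xx}=\gamma v_{xx}=0$, and the two transmission relations at $x=0$, namely $\chi v_{x}(0)-\gamma\beta p_{x}(0)=\kappa z_{x}(0)=0$ and $\beta p_{x}(0)=\gamma\beta v_{x}(0)$, combined with $\chi_{1}=\chi-\gamma^{2}\beta>0$, yield $v_{x}(0)=0$, hence $v\equiv p\equiv 0$ via the Dirichlet conditions at $\ell_{2}$. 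Thus $\mathbb{U}=0$ in every case.

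Next I would prove (ii) by showing that $i\lambda I-\mathcal{A}$ is surjective for each $\lambda\in\mathbb{R}^{*}$. Given $\mathbb{F}=(f_{1},\dots,f_{6})\in\mathcal{H}$, I mimic the construction used in Proposition \ref{lu}: the relations $\textsf{V}=i\lambda v-f_{2}$, $\textsf{P}=i\lambda p-f_{4}$ and (using Lemma \ref{L2.4} to guarantee integrability)
\begin{equation*}
\varphi(x,\xi)=\frac{\mu(\xi)\textsf{V}(x)+f_{6}(x,\xi)}{|\xi|^{2}+\eta+i\lambda}
\end{equation*}
allow me to eliminate $\textsf{V},\textsf{P},\varphi$ from the remaining equations. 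This reduces $(i\lambda I-\mathcal{A})\mathbb{U}=\mathbb{F}$ to an elliptic system for $(z,v,p)$ on $(-\ell_{1},0)\cup(0,\ell_{2})$ with the prescribed transmission conditions at $x=0$, and I recast it as a sesquilinear problem on $H_{L}^{1}(-\ell_{1},0)\times H_{R}^{1}(0,\ell_{2})\times H_{R}^{1}(0,\ell_{2})$. The principal part (second-order derivatives plus the $\beta\|\gamma v_{x}-p_{x}\|^{2}$ contribution plus $\kappa\|z_{x}\|^{2}$) is coercive by Lemma \ref{LemmaEquiv}, while the $\lambda$-dependent zero-order terms and the nonlocal term $\mathfrak{C}E(\lambda,\alpha,\eta)v$ form a compact perturbation. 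The Fredholm alternative then reduces surjectivity to injectivity, which is precisely Step~1. The continuity of the right-hand side uses Lemma \ref{L2.4} again to control $\int_{\mathbb{R}}\mu(\xi)f_{6}(x,\xi)/(|\xi|^{2}+\eta+i\lambda)\,d\xi$ in $L^{2}(0,\ell_{2})$.

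The main obstacle is Step~2: the reduced sesquilinear form is not coercive as it stands because of the complex zero-order term $(i\lambda\rho+\mathfrak{C}E(\lambda,\alpha,\eta))v$, so a direct Lax--Milgram argument will not apply. The cleanest route is the Fredholm/compact-perturbation strategy sketched above, closing the argument via the injectivity already obtained in Step~1. Combining both steps yields $\sigma(\mathcal{A})\cap i\mathbb{R}\subseteq\{0\}$, which is countable, and no element of $i\mathbb{R}$ is an eigenvalue, so Theorem \ref{Arendt} delivers the strong stability.
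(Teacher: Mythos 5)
Your proposal is correct in outline but follows a genuinely different route from the paper. You verify the two Arendt--Batty hypotheses directly: first ruling out purely imaginary eigenvalues by the dissipation identity \eqref{405} (which kills $z_x$ and $\varphi$, hence $\textsf{V}$, and then the ODE cascade with the transmission conditions and $\chi_1=\chi-\gamma^2\beta>0$ kills the rest), and then proving surjectivity of $i\lambda I-\mathcal{A}$ for $\lambda\in\mathbb{R}^*$ by a coercive-plus-compact decomposition and the Fredholm alternative, closed by the injectivity already established. The paper instead never proves surjectivity for $i\lambda$ directly: it argues by contradiction, using the fact (Remark A.3 in \cite{Akil2022}, \cite{LiuZheng01}) that a boundary point $i\omega$ of the spectrum of an m-dissipative generator admits a normalized sequence of approximate eigenvectors $(\lambda_n,U^n)$ with $(i\lambda_n I-\mathcal{A})U^n\to 0$, and then Lemmas \ref{lemma1-strong} and \ref{lemma2-strong} drive $\|U^n\|_{\mathcal{H}}\to 0$, contradicting normalization. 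Your approach buys a cleaner conceptual separation (point spectrum versus invertibility) and reuses the variational machinery of Proposition \ref{lu} together with Lemma \ref{L2.4}; the paper's approach buys uniformity --- the same multiplier estimates handle true and approximate eigenvectors at once, and they are then recycled almost verbatim in the polynomial-stability section. Two points in your Step~2 still need care if you write it out: the essential transmission condition becomes $z(0)=i\lambda v(0)-f_2(0)$, so the trial constraint is $\lambda$-dependent and affine while the test-space constraint $\Gamma(0)=\Theta(0)$ is what cancels the flux boundary terms, so you need a lifting or a Banach--Ne\v{c}as--Babu\v{s}ka-type formulation rather than plain Lax--Milgram even for the principal part; and the cascade $p_{xx}\equiv 0$ in Step~1 implicitly uses $\gamma\neq 0$, which is harmless (the paper assumes it throughout) but worth stating.
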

\noindent According to Theorem of Arendt-Batty, to prove Theorem \ref{Strong1}, we need to prove that the operator $\mathcal{A}$ has no pure imaginary eigenvalues and $\sigma(\mathcal{A})\cap i\R$ is countable. The proof of Theorem \ref{Strong1} will be established based on the following proposition.
\begin{Proposition}\label{proposition2}
We have
\begin{eqnarray}
&&\text{If}\quad \eta>0,  \, \text{we have}\quad i\R\subseteq \rho(\mathcal{A}),\label{pro1}\\
&&\text{If}\quad \eta=0,  \, \text{we have}\quad i\R^{\ast}\subseteq \rho(\mathcal{A}).\label{pro1*}
\end{eqnarray}
\end{Proposition}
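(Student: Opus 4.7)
The plan is to prove, for each imaginary $i\lambda$ in the stated range, both (i) that $i\lambda$ is not an eigenvalue of $\mathcal{A}$ and (ii) that $i\lambda I - \mathcal{A}\colon\mathcal{D}(\mathcal{A})\to\mathcal{H}$ is surjective. Since $\mathcal{A}$ is closed, these together with the bounded inverse theorem yield $i\lambda\in\rho(\mathcal{A})$.

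For (i), I suppose $\mathcal{A}\mathbb{U}=i\lambda\mathbb{U}$ with $\mathbb{U}=(z,v,\textsf{V},p,\textsf{P},\varphi)\in\mathcal{D}(\mathcal{A})$ and take the real part of $\langle\mathcal{A}\mathbb{U},\mathbb{U}\rangle_{\mathcal{H}}=i\lambda\|\mathbb{U}\|_{\mathcal{H}}^{2}$. The dissipation identity from the proof of Proposition \ref{lu} forces
\begin{equation*}
\kappa\int_{-\ell_1}^{0}|z_x|^{2}\,dx+\mathfrak{C}\int_{0}^{\ell_2}\!\!\int_{\mathbb{R}}(|\xi|^{2}+\eta)|\varphi(x,\xi)|^{2}\,d\xi\,dx=0,
\end{equation*}
so combined with $z(-\ell_1)=0$ we obtain $z\equiv 0$ and $\varphi(x,\xi)=0$ for a.e.\ $\xi$ with $|\xi|^2+\eta>0$. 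The sixth component of the eigenvalue equation then reads $\mu(\xi)\textsf{V}(x)=0$ almost everywhere, and since $\mu(\xi)\neq 0$ for $\xi\neq 0$ this gives $\textsf{V}\equiv 0$ on $(0,\ell_2)$. When $\lambda\neq 0$ the relation $\textsf{V}=i\lambda v$ forces $v\equiv 0$; the third equation then reduces to $\gamma\beta p_{xx}=0$ and the fifth to $\mu i\lambda\textsf{P}=0$, which together with $\textsf{P}=i\lambda p$, $p(\ell_2)=0$ and the transmission relation $\beta p_x(0)=\gamma\beta v_x(0)=0$ yields $p\equiv 0$ and $\textsf{P}\equiv 0$. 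The remaining case $\lambda=0$ is only relevant when $\eta>0$ and is already covered by Remark \ref{Remark1A}.

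For (ii), given $\mathbb{F}=(f_1,f_2,f_3,f_4,f_5,f_6)\in\mathcal{H}$ I seek $\mathbb{U}\in\mathcal{D}(\mathcal{A})$ solving $(i\lambda I-\mathcal{A})\mathbb{U}=\mathbb{F}$. The second, fourth and sixth components determine $\textsf{V}$, $\textsf{P}$ and $\varphi$ algebraically:
\begin{equation*}
\textsf{V}=i\lambda v-f_2,\quad\textsf{P}=i\lambda p-f_4,\quad\varphi(x,\xi)=\frac{\mu(\xi)(i\lambda v(x)-f_2(x))+f_6(x,\xi)}{i\lambda+|\xi|^{2}+\eta}.
\end{equation*}
Lemma \ref{L2.4} guarantees that the denominator never vanishes and that $\int_{\mathbb{R}}\mu(\xi)\varphi\,d\xi$ is well-defined through the finite constant $E(\lambda,\alpha,\eta)$, precisely under the hypothesis $\eta>0$ or ($\eta=0$ with $\lambda\neq 0$). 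Substituting these formulas into the first, third and fifth equations of $(i\lambda I-\mathcal{A})\mathbb{U}=\mathbb{F}$ and using the transmission and boundary conditions reduces the problem to a coupled elliptic boundary-transmission system for $(z,v,p)\in H_L^1(-\ell_1,0)\times H_R^1(0,\ell_2)\times H_R^1(0,\ell_2)$. Its principal part is the continuous coercive sesquilinear form $\mathcal{B}$ already employed in the proof of Proposition \ref{lu}; the difference consists of the zeroth-order terms $-\rho\lambda^{2}v+i\lambda\mathfrak{C}E(\lambda,\alpha,\eta)v$ and $-\mu\lambda^{2}p$ together with the shift $i\lambda z$ in the heat block.

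The main obstacle is that these lower-order terms destroy coercivity when $|\lambda|$ is large, so Lax--Milgram alone no longer suffices. I plan to handle this via the Fredholm alternative: the compact embedding $H^{1}\hookrightarrow L^{2}$ on each bounded interval renders the lower-order perturbation compact, hence $i\lambda I-\mathcal{A}$ decomposes as a compact perturbation of an isomorphism and is therefore Fredholm of index zero. Surjectivity is then equivalent to injectivity of the homogeneous problem, which is precisely statement (i). Elliptic regularity promotes the resulting variational solution to $H^{2}$ on each subinterval, and the explicit formulas above recover $\textsf{V}$, $\textsf{P}$ and $\varphi$ with the regularity required for $\mathbb{U}\in\mathcal{D}(\mathcal{A})$, delivering both resolvent inclusions \eqref{pro1} and \eqref{pro1*}.
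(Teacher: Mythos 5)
Your argument is correct in outline but follows a genuinely different route from the paper. The paper never addresses surjectivity of $i\lambda I-\mathcal{A}$ at all: it invokes the standard fact (Liu--Zheng) that a point of $\sigma(\mathcal{A})\cap i\R$ for the generator of a contraction semigroup must be an approximate eigenvalue, extracts a normalized sequence $U^n$ with $(i\lambda_n I-\mathcal{A})U^n\to 0$ as in \eqref{pro4}, and then spends Lemmas \ref{lemma1-strong} and \ref{lemma2-strong} showing that every component of $U^n$ tends to zero, contradicting \eqref{pro3}. You instead prove the two halves of invertibility directly: triviality of the kernel (your step (i), which is essentially the ``exact eigenvector'' special case of the paper's lemmas and is correspondingly much shorter and cleaner), plus surjectivity via elimination of $\textsf{V},\textsf{P},\varphi$ and a Fredholm alternative, finishing with the closed graph theorem. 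Both strategies are standard and both work here; yours avoids the approximate-point-spectrum machinery, while the paper's avoids surjectivity and has the side benefit that the estimates of Lemmas \ref{lemma1-strong}--\ref{lemma2-strong} are reused in quantified form for the polynomial decay proof in Section \ref{Polynomaial stability}. Two points in your surjectivity step deserve more care. First, the phrase ``$i\lambda I-\mathcal{A}$ decomposes as a compact perturbation of an isomorphism'' is false if read on $\mathcal{H}$ itself, since the $\varphi$-block $L^{2}(\mathbb{R};L^{2}(0,\ell_2))$ admits no compact embedding; it is only the reduced $(z,v,p)$-system on $H_L^{1}(-\ell_1,0)\times H_R^{1}(0,\ell_2)\times H_R^{1}(0,\ell_2)$ that is Fredholm of index zero, and one must additionally verify that a solution of the \emph{homogeneous reduced} problem lifts, via elliptic regularity and your formula for $\varphi$ (using Lemma \ref{L2.4} to place $\varphi$ and $|\xi|\varphi$ in $L^{2}(\mathbb{R};L^{2}(0,\ell_2))$), to a genuine element of $\ker(i\lambda I-\mathcal{A})$ before quoting step (i). Second, the transmission condition $z(0)=\textsf{V}(0)=i\lambda v(0)-f_2(0)$ is an affine, $\lambda$-dependent constraint coupling the trial functions for $z$ and $v$, so the variational formulation requires a lifting of the datum $f_2(0)$ and a correspondingly matched test space; this is a fixable but nontrivial bookkeeping issue (which, to be fair, the paper also glosses over in the proof of Proposition \ref{lu}). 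With these points tightened, your proof is a valid alternative to the one in the paper.
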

\noindent We will prove Proposition \ref{proposition2} by a contradiction argument. Note that, according to Lemma \ref{p4.2}, for $\eta=0$, it follows that $0\in \sigma(\mathcal{A})$, hence $0\notin \rho(\mathcal{A})$. Furthermore, as indicated in Remark \ref{Remark1A}, we have $0\in\rho(\mathcal{A})$. Thus, our focus narrows to demonstrating that $i\R^{\ast}\subseteq \rho(\mathcal{A})$. Now, suppose that  $i\R^{\ast}\not\subseteq \rho(\mathcal{A})$ , then there exists $\omega\in \R^{\ast}$ such that $i\omega\notin \rho(\mathcal{A})$.  According to Remark A.3 in \cite{Akil2022} and page 25 in \cite{LiuZheng01} , there exists $\lbrace \la_n, U^n=(z^n,\,v^n,\,\textsf{V}^n,\,p^n,\,\textsf{P}^n,\,\varphi^n)^{\top}\rbrace_{n\geq 1}\subset \R^{\ast}\times D(\mathcal{A})$, such that
\begin{equation}\label{pro2}
\la_n\to \omega\quad \text{as}\quad \, n\to\infty\quad\text{and}\quad |\la_n|<|\omega|,
\end{equation}
\begin{equation}\label{pro3}
\|U^n\|_{\mathcal{H}}=\|(z^n,\,v^n,\,\textsf{V}^n,\,p^n,\,\textsf{P}^n,\,\varphi^n)^{\top}\|_{\mathcal{H}}=1,
\end{equation}
and
\begin{equation}\label{pro4}
(i\la_n I-\mathcal{A})U^n=F_n:=(f^1_n, f^2_n, f^3_n, f^4_n, f^5_n, f^6_n(\cdot,\xi))\to 0\quad\text{in}\quad\mathcal{H}, \quad\text{as}\quad n\to \infty.
\end{equation}
Detailing \eqref{pro4}, we get
\
\begin{equation}\label{pro5}
\left\{
\begin{array}{ll}
i\lambda_n z^n - \kappa z^n_{xx} = f^1_n, \\[0.1in]
i\lambda _nv^n - \textsf{V}^n = f^2_n,  \\[0.1in]
i\lambda_n\rho\textsf{V}^n - \chi v^n_{xx} + \gamma \beta p^n_{xx} + \mathfrak{C} \displaystyle\int_{\mathbb{R}}\mu(\xi)\varphi^n(x,\xi)d\xi = \rho f^3_n,  \\[0.1in]
i\lambda_n p^n - \textsf{P}^n = f^4_n \\[0.1in]
i\lambda\mu\textsf{P}^n - \beta p^n_{xx} + \gamma \beta v^n_{xx} =\mu f^5_n, \\[0.1in]
(|\xi|^{2} + \eta + i\lambda_n)\varphi^n(x,\xi) - \mu(\xi)\textsf{V} ^n= f^6_n(\cdot,\xi),\quad \forall \,\xi\in \mathbb{R}.
\end{array}\right.
\end{equation}
Inserting \eqref{pro5}$_2$ and \eqref{pro5}$_4$ into \eqref{pro5}$_3$ and \eqref{pro5}$_5$ respectively, we obtain
\begin{equation}\label{E2}
\left\{
\begin{array}{ll}
\displaystyle
\la _n^2\rho v^n+\chi v^n_{xx}-\gamma\beta p^n_{xx}-\mathfrak{C}\int_{\mathbb{R}}\mu(\xi)\varphi^n(x,\xi)d\xi =F^1_n,\\[0,2in]
\displaystyle
\la _n^2\mu p^n+\beta p^n_{xx}-\gamma\beta v^n_{xx}=F^2_n,
\end{array}\right.
\end{equation}
where $F^1_n=-(\rho f^n_3+i\la_n\rho f_2^n)$ and $F_n^2=-(\mu f_5^n+i\la_n\mu f_4^n)$.\\
By using the fact that $\chi=\chi_1+\gamma^2\beta$  in \eqref{E2}$_1$, we obtain
\begin{equation*}
\la _n^2\rho v+\chi _1v^n_{xx}+\gamma\left(\gamma\beta
v^n_{xx}-\beta p^n_{xx}\right)-\mathfrak{C}\int_{\mathbb{R}}\mu(\xi)\varphi^n(x,\xi)d\xi =F^1_n.
\end{equation*}
Combining the above equation with \eqref{E2}$_2$ leads to
\begin{equation*}
\chi _1v^n_{xx}=-\la _n^2\rho v^n-\la _n^2\gamma \mu p^n+\mathfrak{C}\int_{\mathbb{R}}\mu(\xi)\varphi^n(x,\xi)d\xi +F^1_n+\gamma F^2_n .
\end{equation*}
Inserting the above equation into \eqref{E2}, we obtain

\begin{eqnarray}
\displaystyle
\la _n^2\rho v^n+\chi _1v^n_{xx}+\la _n^2\gamma \mu p^n-\mathfrak{C}\int_{\mathbb{R}}\mu(\xi)\varphi^n(x,\xi)d\xi =F^1_n+\gamma F^2_n ,\label{e1}\\
\displaystyle
\la _n^2 \mu\chi p^n+\chi_1\beta p^n_{xx}+\la _n^2\rho\gamma\beta v^n-\gamma\beta\mathfrak{C}\int_{\mathbb{R}}\mu(\xi)\varphi^n(x,\xi)d\xi =\gamma\beta F^1_n+\chi F^2_n.\label{e2}
\end{eqnarray}
We will proof condition $i\R^{\ast}\subseteq \rho(\mathcal{A})$ by finding a contradiction with \eqref{pro3} such as $\|U^n\|_{\mathcal{H}}\xrightarrow[n\to\infty] {} 0$.  The proof of this Proposition will rely on the forthcoming Lemmas.

\begin{Lemma}\label{lemI4}
Let $\alpha\in (0,1)$, $\eta\geq 0$ and $\la\in \R$, then
$$
\begin{array}{l}
\displaystyle
\mathtt{J}_{1}(\la,\eta,\alpha)=\int_{\R}\frac{\abs{\xi}^{\alpha+\frac{1}{2}}}{\left(\abs{\la}+\xi^2+\eta\right)^2}d\xi=c_1\left(\abs{\la}+\eta\right)^{\frac{\alpha}{2}-\frac{5}{4}},\quad
\mathtt{J}_{2}(\la,\eta)=\left(\int_{\R}\frac{1}{(\abs{\la}+\xi^2+\eta)^2}d\xi\right)^{\frac{1}{2}}=\sqrt{\frac{\pi}{2}}\frac{1}{(\abs{\la}+\eta)^{\frac{3}{4}}},\\[0.1in]
\displaystyle
\hspace{2cm}\mathtt{J}_{3}(\la,\eta)=\left(\int_{\R}\frac{\xi^2}{\left(\abs{\la}+\xi^2+\eta\right)^4}d\xi\right)^{\frac{1}{2}}=\frac{\sqrt{\pi}}{4}\frac{1}{(\abs{\la}+\eta)^{\frac{5}{4}}}
\end{array}
$$
where $\displaystyle c_1=\int_{1}^{\infty}\frac{\left(y-1\right)^{\frac{\alpha}{2}-\frac{1}{4}}}{y^2}dy$.
\end{Lemma}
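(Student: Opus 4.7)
The plan is to observe that all three integrals are of the form $\int_{\R} \xi^a\,(|\la|+\xi^2+\eta)^{-b}\,d\xi$, so a single rescaling substitution will produce the stated $(|\la|+\eta)$-dependence, and the remaining dimensionless integrals can either be evaluated directly (cases $\mathtt{J}_2$, $\mathtt{J}_3$) or rewritten to match the definition of $c_1$ (case $\mathtt{J}_1$).

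For $\mathtt{J}_1$, I would first use the evenness of the integrand to restrict to $\xi\geq 0$, then set $s := |\la|+\eta$ and substitute $\xi = \sqrt{s}\,u$. This gives $|\la|+\xi^2+\eta = s(1+u^2)$ and $d\xi = \sqrt{s}\,du$, so all the $s$-factors collect into $s^{(\alpha+1/2)/2+1/2-2} = s^{\alpha/2-5/4}$ in front of the dimensionless integral $2\int_0^\infty u^{\alpha+1/2}(1+u^2)^{-2}\,du$. To match this with $c_1 = \int_1^\infty (y-1)^{\alpha/2-1/4}\,y^{-2}\,dy$, I would apply the further change of variables $y = 1+u^2$, for which $dy = 2u\,du$ and $u^{\alpha+1/2}\,du = \tfrac12 (y-1)^{\alpha/2-1/4}\,dy$. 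This converts the inner integral into exactly $\tfrac{c_1}{2}$, and thus yields $\mathtt{J}_1 = c_1\,(|\la|+\eta)^{\alpha/2-5/4}$.

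For $\mathtt{J}_2$ and $\mathtt{J}_3$, the same substitution $\xi=\sqrt{s}\,u$ reduces both integrals inside the square roots to universal constants times $s^{-3/2}$ and $s^{-5/2}$ respectively. Concretely, the inner integrals become $\int_{\R}(1+u^2)^{-2}\,du = \pi/2$ and $\int_{\R} u^2(1+u^2)^{-4}\,du$; the latter is handled via $u^2 = (1+u^2)-1$, reducing it to the classical values $\int_{\R}(1+u^2)^{-3}\,du = 3\pi/8$ and $\int_{\R}(1+u^2)^{-4}\,du = 5\pi/16$, giving $\pi/16$. Taking square roots then produces the advertised prefactors $\sqrt{\pi/2}$ and $\sqrt{\pi}/4$.

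The only real care needed is the convergence of $c_1$: the integrand $(y-1)^{\alpha/2-1/4}\,y^{-2}$ is integrable at $y=1$ because $\alpha/2-1/4 > -1$ (it is in fact $>-1/4$ for $\alpha\in(0,1)$) and at infinity since the exponent is $\alpha/2-9/4 < -1$. This also guarantees the convergence of $\mathtt{J}_1$. I do not expect any genuine obstacle; the lemma is a pure computation and the only bookkeeping is ensuring the exponents on $s=|\la|+\eta$ are collected correctly under the rescaling.
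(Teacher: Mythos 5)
Your computation is correct: the rescaling $\xi=\sqrt{|\lambda|+\eta}\,u$ collects the exponents correctly (giving $(\,|\lambda|+\eta)^{\alpha/2-5/4}$, $(\,|\lambda|+\eta)^{-3/2}$ and $(\,|\lambda|+\eta)^{-5/2}$ inside the respective integrals), the change of variables $y=1+u^2$ does turn $2\int_0^\infty u^{\alpha+1/2}(1+u^2)^{-2}\,du$ into $c_1$, and the values $\int_{\mathbb R}(1+u^2)^{-2}du=\pi/2$, $\int_{\mathbb R}(1+u^2)^{-3}du=3\pi/8$, $\int_{\mathbb R}(1+u^2)^{-4}du=5\pi/16$ give exactly $\pi/16$ for the third integrand and hence the prefactor $\sqrt{\pi}/4$. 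The paper does not prove this lemma itself but refers to \cite{AkilMCRFIssa}, so there is no in-text argument to compare against; your proof is the standard scaling computation one would expect there, it is self-contained, and the convergence discussion for $c_1$ (integrable at $y=1$ since $\alpha/2-1/4>-1$, and at infinity since $\alpha/2-9/4<-1$) is the only delicate point and you have handled it.
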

\noindent For the proof of this Lemma we refer to \cite{AkilMCRFIssa}.
\begin{Lemma}\label{lemma1-strong}
Assume that $\eta\geq 0$. The solution $(z^n,\,v^n,\,\textsf{V}^n,\,p^n,\,\textsf{P}^n,\,\varphi^n)^{\top}\in D(\mathcal{A})$ of the system \eqref{pro5} satisfies the following
\begin{equation}\label{Est1}
\begin{array}{ll}
\displaystyle
\int_{-\ell_{1}}^{0} |z^n|^2dx\xrightarrow[n\to\infty] {} 0, \quad\mathfrak{C}\int_0^{\ell_2}\int_{\mathbb{R}}(|\xi|^{2} + \eta)|\varphi^n(x,\,\xi)|^2 d\xi dx\xrightarrow[n\to\infty] {} 0,\quad \int_0^{\ell_2}|\textsf{V}^n|^2dx\xrightarrow[n\to\infty] {} 0\\[0.2in]\displaystyle
\int_0^{\ell_2}|\lambda_n v^n|^2dx\xrightarrow[n\to\infty] {} 0\quad\text{and}\quad 
\int_0^{\ell_2}\int_{\mathbb{R}}|\varphi^n(x,\,\xi)|^2 d\xi dx\xrightarrow[n\to\infty] {} 0.
\end{array}
\end{equation}
\end{Lemma}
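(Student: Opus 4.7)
The plan is to extract the dissipation produced by the resolvent equation and then propagate it through \eqref{pro5}$_6$ to the velocity $\textsf{V}^n$, before closing the argument for $v^n$ and $\varphi^n$. Taking the real part of $\langle(i\lambda_n I-\mathcal{A})U^n,U^n\rangle_{\mathcal{H}}$, the skew-symmetric contribution $\mathrm{Re}\langle i\lambda_n U^n,U^n\rangle$ vanishes and \eqref{405} yields
$$\kappa\|z^n_x\|^2_{L^2(-\ell_1,0)} + \mathfrak{C}\int_0^{\ell_2}\!\!\int_{\mathbb{R}}(|\xi|^2+\eta)|\varphi^n|^2\,d\xi\,dx \;=\; \mathrm{Re}\langle F_n,U^n\rangle_{\mathcal{H}}\;\longrightarrow\; 0,$$
since $\|F_n\|_{\mathcal{H}}\to 0$ and $\|U^n\|_{\mathcal{H}}=1$. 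This delivers the first two convergences in \eqref{Est1}, and Poincaré's inequality on $(-\ell_1,0)$ (using $z^n(-\ell_1)=0$) upgrades the first to $\|z^n\|_{L^2(-\ell_1,0)}\to 0$.

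For the velocity, I would solve \eqref{pro5}$_6$ as $\varphi^n(x,\xi)=(\mu(\xi)\textsf{V}^n(x)+f_n^6(x,\xi))/(|\xi|^2+\eta+i\lambda_n)$, multiply by $(|\xi|^2+\eta)$, apply the elementary inequality $|A+B|^2\geq\tfrac12|A|^2-|B|^2$ with $A=\mu(\xi)\textsf{V}^n$, $B=f_n^6$, and integrate in $\xi$. Rearranging and using $\tfrac{y}{y^2+\lambda_n^2}\leq\tfrac{1}{2|\lambda_n|}$ for $y\geq 0$ produces the pointwise-in-$x$ bound
$$K_n\,|\textsf{V}^n(x)|^2 \;\leq\; 2\!\int_{\mathbb{R}}(|\xi|^2+\eta)|\varphi^n|^2\,d\xi \;+\; \frac{1}{|\lambda_n|}\!\int_{\mathbb{R}}|f_n^6(x,\xi)|^2\,d\xi,\qquad K_n:=\int_{\mathbb{R}}\frac{(|\xi|^2+\eta)\,|\xi|^{2\alpha-1}}{(|\xi|^2+\eta)^2+\lambda_n^2}\,d\xi.$$
Integrating over $x\in(0,\ell_2)$, invoking Step 1 and $f_n^6\to 0$ in $L^2$, and assuming $K_n$ is bounded below uniformly in $n$ yields $\|\textsf{V}^n\|_{L^2(0,\ell_2)}\to 0$. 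Once this is in hand, \eqref{pro5}$_2$ rewrites as $\lambda_n v^n=-i(\textsf{V}^n+f_n^2)$, giving $\|\lambda_n v^n\|_{L^2}\to 0$; and reinserting $\|\textsf{V}^n\|\to 0$ into the algebraic formula for $\varphi^n$ yields $\|\varphi^n\|_{L^2(\mathbb{R};L^2(0,\ell_2))}\to 0$, via a Lemma~\ref{L2.4}-type $L^2_\xi$ bound on the $\textsf{V}^n$ contribution and the trivial estimate $((|\xi|^2+\eta)^2+\lambda_n^2)^{-1}\leq\lambda_n^{-2}$ on the $f_n^6$ piece.

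The main obstacle is securing the uniform lower bound $K_n\geq c>0$. This rests decisively on $\lambda_n\to\omega\neq 0$, which is forced by the setup of Proposition~\ref{proposition2}: for $\eta>0$ one has $0\in\rho(\mathcal{A})$ by Remark~\ref{Remark1A}, and for $\eta=0$ the proposition explicitly excludes $\omega=0$, so $|\lambda_n|\geq|\omega|/2$ for large $n$. Dominated convergence then gives $K_n\to K_\infty>0$, where finiteness of $K_\infty$ at $|\xi|=\infty$ uses $\alpha<1$ (integrand $\sim|\xi|^{2\alpha-3}$), and finiteness at $\xi=0$—particularly delicate in the $\eta=0$ case where the denominator becomes $\xi^4+\lambda_n^2$—uses $\alpha>0$. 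Both requirements are encoded in the standing hypothesis $\alpha\in(0,1)$.
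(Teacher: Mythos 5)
Your argument is correct and its skeleton coincides with the paper's: dissipation identity first (giving the $z^n_x$ and weighted $\varphi^n$ limits, then Poincar\'e), then the sixth resolvent equation to control $\textsf{V}^n$, then \eqref{pro5}$_2$ for $\lambda_n v^n$, then the unweighted $\varphi^n$ norm. The two places where you genuinely diverge are the $\textsf{V}^n$ step and the final step. For $\textsf{V}^n$, the paper starts from the pointwise inequality $\mu(\xi)|\textsf{V}^n|\leq(|\lambda_n|+\xi^2+\eta)|\varphi^n|+|f^6_n|$, multiplies by the weight $(|\lambda_n|+\xi^2+\eta)^{-2}|\xi|$ and integrates, relying on the closed-form evaluations $\mathtt{J}_1,\mathtt{J}_2,\mathtt{J}_3$ of Lemma \ref{lemI4}; you instead invert \eqref{pro5}$_6$ exactly and use $|A+B|^2\geq\tfrac12|A|^2-|B|^2$ to get a lower bound $K_n|\textsf{V}^n(x)|^2\lesssim\int(|\xi|^2+\eta)|\varphi^n|^2d\xi+|\lambda_n|^{-1}\|f^6_n(x,\cdot)\|^2_{L^2_\xi}$, needing only a qualitative uniform lower bound on $K_n$, which you correctly trace back to $\lambda_n\to\omega\neq 0$ (guaranteed by Lemma \ref{p4.2} and Remark \ref{Remark1A} in the setup of Proposition \ref{proposition2}). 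This buys you independence from the explicit constants of Lemma \ref{lemI4} at the price of the dominated-convergence check on $K_n$. For the last limit, the paper splits into $\eta>0$ (immediate) and $\eta=0$ (a multiplier $-i\lambda_n^{-1}\overline{\varphi^n}$ with an absorption of $\tfrac12\|\varphi^n\|^2$); your direct substitution of the resolvent formula, with the $L^2_\xi$ bound of Lemma \ref{L2.4}-type on the $\textsf{V}^n$ part and $((|\xi|^2+\eta)^2+\lambda_n^2)^{-1}\leq\lambda_n^{-2}$ on the $f^6_n$ part, treats both cases at once and is arguably cleaner. One cosmetic inaccuracy: in the $\eta=0$ case the integrand of $K_\infty$ near $\xi=0$ behaves like $|\xi|^{2\alpha+1}/\omega^2$, so integrability there does not actually hinge on $\alpha>0$; it is the $\eta>0$ case (numerator $\sim\eta|\xi|^{2\alpha-1}$) that needs it. This does not affect the validity of the proof.
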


\begin{proof}
Taking the inner product of \eqref{pro4} with $U$ in $\mathcal{H}$ and using the fact that $\|F_n\|_{\mathcal{H}}\xrightarrow[n\to\infty] {} 0$ and $\|U^n\|_{\mathcal{H}}=1$ , we obtain
\begin{equation*}
\kappa \int_{-\ell_{1}}^{0} |z^n_{x}|^2dx + \mathfrak{C}\int_0^{\ell_2}\int_{\mathbb{R}}(|\xi|^{2} + \eta)|\varphi^n(x,\,\xi)|^2 d\xi dx=-\Re(\left<\mathcal{A}U^n,U^n\right>_{\mathcal{H}})\leq  \|F_n\|_{\mathcal{H}}\|U^n\|_{\mathcal{H}}\xrightarrow[n\to\infty] {} 0.
\end{equation*}
Then,  the above results leads to
\begin{equation*}
\int_{-\ell_{1}}^{0} |z^n_{x}|^2dx\xrightarrow[n\to\infty] {} 0\quad\text{and}\quad \mathfrak{C}\int_0^{\ell_2}\int_{\mathbb{R}}(|\xi|^{2} + \eta)|\varphi^n(x,\,\xi)|^2 d\xi dx\xrightarrow[n\to\infty] {} 0.
\end{equation*}
By using Poincar\'e inequality and the first limit in the above equation, we obtain the first limit in \eqref{Est1}. \\
Now our aim is to prove the third limit in \eqref{Est1}. 
From \eqref{pro5}$_6$, we get 
$$
\mu(\xi)\abs{\textsf{V}^n}\leq \left(\abs{\la}+\xi^2+\eta\right)\abs{\varphi^n(x,\xi)}+\abs{f^6_n(x,\xi)}.
$$
Multiplying the above inequality by $\left(\abs{\la_n}+\xi^2+\eta\right)^{-2}\abs{\xi}$,  and integrating over $\R$, we get 
\begin{equation}\label{E1}
\mathtt{J}_{1}(\la_n,\eta,\alpha)\abs{\textsf{V}^n}\leq \mathtt{J}_{2}(\la,\eta)\left(\int_{\R}\abs{\xi\varphi^n(x,\xi)}^2d\xi\right)^{\frac{1}{2}}+\mathtt{J}_{3}(\la,\eta)\left(\int_{\R}\abs{f_6^n(x,\xi)}^2d\xi\right)^{\frac{1}{2}},
\end{equation}
where
$\mathtt{J}_{1}(\la,\eta,\alpha), \mathtt{J}_{2}(\la,\eta)\,\text{and}\,\mathtt{J}_{3}(\la,\eta)$ are defined in Lemma \ref{lemI4}.
Applying Young's inequality leads to
\begin{equation*}
\begin{array}{ll}
\displaystyle
\int_{0}^{l_2}\abs{\textsf{V}^n}^2dx
&\displaystyle\leq 2\frac{\mathtt{J}_{2}^2}{\mathtt{J}_{1}^2}\int_{0}^{l_2}\int_{\R}\abs{\xi\varphi^n(x,\xi)}^2d\xi dx+2\frac{\mathtt{J}_{3}^2}{\mathtt{J}_{1}^2}\int_{0}^{l_2}\int_{\R}\abs{f_6^n(x,\xi)}^2d\xi dx\\
\displaystyle
&\displaystyle\leq  \frac{1}{c_1(\abs{\la}+\eta)^{\alpha-1}}\int_{0}^{l_2}\int_{\R}\abs{\xi\varphi^n(x,\xi)}^2d\xi dx+\frac{\sqrt{\pi}}{4}\frac{1}{c_1\left(\abs{\la}+\eta\right)^{\alpha}}\int_{\R}\abs{f_6^n(x,\xi)}^2d\xi dx.
\end{array}
\end{equation*}
Passing to the limit in the above inequality and by using the second limit in \eqref{Est1},   and the fact that $\|F^n\|\xrightarrow[n\to\infty] {} 0$ and that $\lambda_n\xrightarrow[n\to\infty] {}  \omega$, we conclude that the third limit in \eqref{Est1}. \\
Next, from \eqref{pro5}$_2$ we have
$$i\la_n v^n=\textsf{V}^n+f_n^2.$$
It follows that
$$
\int_{0}^{l_2}\abs{\la_n v^n}^2dx\leq 2 \int_{0}^{l_2}\abs{\textsf{V}^n}^2dx+2\int_{0}^{l_2}\abs{f_n^2}^2dx.
 $$
By using the third limit in \eqref{Est1} and that $\|F^n\|_{\mathcal{H}} \xrightarrow[n\to\infty] {} 0$ in the above inequality, the fourth limit is obtained.   Now, for the last limit in \eqref{Est1}, when $\eta >0$ by using the second limit in \eqref{Est1},  it is clear that 
$$\int_0^{\ell_2}\int_{\mathbb{R}}|\varphi^n(x,\,\xi)|^2 d\xi \leq \int_0^{\ell_2}\int_{\mathbb{R}}(\xi^2+\eta)|\varphi^n(x,\,\xi)|^2 d\xi \xrightarrow[n\to\infty] {} 0.$$
However, when $\eta=0$, multiplying \eqref{pro5}$_6$ by $-i\la_n^{-1} \overline{\varphi^n}$ and integrating over $(0,\ell_2)\times \mathbb{R}$, leads to
\begin{equation}\label{phi}
\begin{array}{ll}
\displaystyle
\int_0^{\ell_2}\int_{\mathbb{R}}|\varphi^n(x,\xi)|^2d\xi dx=\Re\left(i\la_n ^{-1}\int_0^{\ell_2}\int_{\mathbb{R}}|\xi\varphi^n(x,\xi)|^2d\xi dx\right)\\
\displaystyle-\Re\left(i\la_n ^{-1}\int_0^{\ell_2}\textsf{V}^n\int_{\mathbb{R}} \mu(\xi)\overline{\varphi^n}(x,\xi)d\xi dx\right)
-\Re\left(i\la_n ^{-1}\int_0^{\ell_2}\int_{\mathbb{R}} f^6_n(x,\xi)\overline{\varphi^n}(x,\xi)d\xi dx\right).
\end{array}
\end{equation}
For the estimation of the last two terms in the above equation,  using Cauchy-Schwarz  and Young's inequalities, we get
\begin{equation}\label{phi1}
\left|\Re\left(i\la_n ^{-1}\int_0^{\ell_2}\textsf{V}^n\int_{\mathbb{R}} \mu(\xi)\varphi(x,\xi)d\xi  dx\right)\right|\leq \frac{\mathfrak{J}_0(\alpha)}{|\la_n|} \left(\int_0^{\ell_2}\int_{\mathbb{R}}(\xi^2+1)|\varphi^n(x,\xi)|^2d\xi dx\right)^{\frac{1}{2}}\left(\int_0^{\ell_2}|\textsf{V}^n|^2dx\right)^{\frac{1}{2}}
\end{equation}
\begin{equation}\label{phi2}
\left|\Re\left(i\la_n ^{-1}\int_0^{\ell_2}\int_{\mathbb{R}} f^6_n(x,\xi)\varphi^n(x,\xi)d\xi dx\right)\right|\leq \frac{1}{2\la_n^2}\int_0^{\ell_2}\int_{\mathbb{R}} |f_n^6(x,\xi)|^2d\xi dx+\frac{1}{2}\int_0^{\ell_2}\int_{\mathbb{R}} |\varphi^n(x,\xi)|^2d\xi dx
\end{equation}
where $\displaystyle \mathfrak{J}_0(\alpha)=\left(\int_{0}^{\infty}\frac{|\xi|^{2\alpha-1}}{\xi^2+1}d\xi \right)$.  We have  $\displaystyle \frac{|\xi|^{2\alpha-1}}{\xi^2+1}\isEquivTo {0} \frac{1}{\xi^{1-2\alpha}}$ and  $\displaystyle \frac{\abs{\xi}^{2\alpha-1}}{\abs{\xi}^2+1}\isEquivTo{+\infty}\frac{1}{\abs{\xi}^{3-2\alpha}}$ and thus $ \mathfrak{J}_0(\alpha)$  is well defined since $0<\alpha<1$.
Combining \eqref{phi1} and \eqref{phi2} with \eqref{phi}, and using the second and third limits in \eqref{Est1} and the fact that $\|U^n\|_{\mathcal{H}}=1$,  $\|F_n\|_{\mathcal{H}}\xrightarrow[n\to\infty] {} 0$ and $\la_n \xrightarrow[n\to\infty] {}  \omega$,  we can derive the last limit in \eqref{Est1}. Thus, the proof is completed.
\end{proof}

\begin{Lemma}\label{lemma2-strong}
Assume that $\eta\geq 0$. The solution $(z^n,\,v^n,\,\textsf{V}^n,\,p^n,\,\textsf{P}^n,\,\varphi^n)^{\top}\in D(\mathcal{A})$ of the system \eqref{pro5} satisfies the following
\begin{equation}\label{Est2}
\begin{array}{ll}
\displaystyle
\int_{0}^{\ell_2} |v_x^n|^2dx\xrightarrow[n\to\infty] {} 0,\quad \int_0^{\ell_2}| \textsf{P}^n|^2dx\xrightarrow[n\to\infty] {} 0 \\[0.1in]\displaystyle
\quad\text{and}\quad
 \int_0^{\ell_2}|p_x^n|^2dx\xrightarrow[n\to\infty] {} 0.
\end{array}
\end{equation}
\end{Lemma}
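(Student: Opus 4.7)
The plan is to derive four resolvent-type identities by pairing the third and fifth equations of \eqref{pro5} against $\overline{\textsf{V}^n}$ and $\overline{\textsf{P}^n}$, and combine them with the convergences from Lemma \ref{lemma1-strong}. First I would upgrade the regularity of the heat variable: from \eqref{pro5}$_1$, $\kappa z^n_{xx}=i\lambda_n z^n-f^1_n$, and since $\|z^n\|_{L^2(-\ell_1,0)}\to 0$, $\|f^1_n\|_{L^2}\to 0$, $|\lambda_n|\to|\omega|$, this yields $\|z^n\|_{H^2(-\ell_1,0)}\to 0$; the $1$D embedding $H^2\hookrightarrow C^1$ then gives both $z^n(0)\to 0$ and $z^n_x(0)\to 0$. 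The relations $\textsf{V}^n=i\lambda_n v^n-f^2_n$ and $\textsf{P}^n=i\lambda_n p^n-f^4_n$ show that $\textsf{V}^n,\textsf{P}^n$ are bounded in $H^1(0,\ell_2)$, so their traces at $x=0$ are bounded. Cauchy--Schwarz together with Lemma \ref{L2.3} yields
\[
\Big\|\int_{\R}\mu(\xi)\varphi^n(\cdot,\xi)d\xi\Big\|_{L^2(0,\ell_2)}^2\le C(\alpha,\eta)\int_0^{\ell_2}\!\!\int_{\R}(|\xi|^2+\eta+1)|\varphi^n|^2 d\xi dx\xrightarrow[n\to\infty]{}0.
\]

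Next I would multiply \eqref{pro5}$_3$ by $\overline{\textsf{V}^n}$ and integrate on $(0,\ell_2)$. Using $\textsf{V}^n(\ell_2)=0$ and $\overline{\textsf{V}^n_x}=-i\lambda_n\overline{v^n_x}-\overline{(f^2_n)_x}$, the boundary traces at $x=0$ combine through $\chi v^n_x(0)-\gamma\beta p^n_x(0)=\kappa z^n_x(0)$ into $\kappa z^n_x(0)\overline{\textsf{V}^n(0)}\to 0$; all remainders involving $\textsf{V}^n$, $F_n$, and the $\varphi$-integral are $o(1)$ by the preliminaries. Taking imaginary parts and dividing by $\lambda_n$ (bounded away from $0$) gives
\[
-\chi\int_0^{\ell_2}|v^n_x|^2 dx+\gamma\beta\,\mathrm{Re}\int_0^{\ell_2}p^n_x\overline{v^n_x}dx=o(1).\qquad(\star_1)
\]
The same operation with \eqref{pro5}$_5$ paired against $\overline{\textsf{V}^n}$ makes the boundary term $(\beta p^n_x(0)-\gamma\beta v^n_x(0))\overline{\textsf{V}^n(0)}$ vanish identically via $\beta p^n_x(0)=\gamma\beta v^n_x(0)$, and produces
\[
\gamma\beta\int_0^{\ell_2}|v^n_x|^2 dx-\beta\,\mathrm{Re}\int_0^{\ell_2}p^n_x\overline{v^n_x}dx=o(1).\qquad(\star_2)
\]
Substituting $\mathrm{Re}\int p^n_x\overline{v^n_x}=\gamma\int|v^n_x|^2+o(1)$ from $(\star_2)$ into $(\star_1)$ produces $(\chi-\gamma^2\beta)\int|v^n_x|^2=\chi_1\int|v^n_x|^2=o(1)$, whence $\int|v^n_x|^2\to 0$ since $\chi_1>0$, and therefore $\mathrm{Re}\int p^n_x\overline{v^n_x}\to 0$.

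To close, I would repeat the multiplier procedure with $\overline{\textsf{P}^n}$ (using $\textsf{P}^n(\ell_2)=0$ and $\overline{\textsf{P}^n_x}=-i\lambda_n\overline{p^n_x}-\overline{(f^4_n)_x}$). Pairing \eqref{pro5}$_3$ with $\overline{\textsf{P}^n}$ gives the boundary contribution $\kappa z^n_x(0)\overline{\textsf{P}^n(0)}\to 0$ and then
\[
\gamma\beta\int_0^{\ell_2}|p^n_x|^2 dx=\chi\,\mathrm{Re}\int_0^{\ell_2}v^n_x\overline{p^n_x}dx+o(1)=o(1),
\]
so $\int|p^n_x|^2\to 0$. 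Pairing \eqref{pro5}$_5$ with $\overline{\textsf{P}^n}$ (boundary term again vanishing by $\beta p^n_x(0)=\gamma\beta v^n_x(0)$) produces
\[
\mu\int_0^{\ell_2}|\textsf{P}^n|^2 dx=\beta\int_0^{\ell_2}|p^n_x|^2 dx-\gamma\beta\,\mathrm{Re}\int_0^{\ell_2}v^n_x\overline{p^n_x}dx+o(1)=o(1),
\]
which delivers the last limit $\int|\textsf{P}^n|^2\to 0$.

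The main obstacles I expect are the careful bookkeeping of boundary traces at $x=0$: one needs the $H^2$-upgrade of $z^n$ to force $z^n_x(0)\to 0$, and the two transmission conditions to make the mixed boundary contributions either vanish identically or reduce to the heat flux; the rest is then a clean algebraic decoupling of the four resolvent identities into the three desired limits.
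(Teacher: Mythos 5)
Your proof is correct, and it takes a genuinely different route from the paper's. The paper first eliminates $\textsf{V}^n,\textsf{P}^n$ to obtain second-order equations in $v^n,p^n$ (its \eqref{e1}--\eqref{e2}), multiplies those by $\overline{v^n}$ and $\overline{p^n}$, and takes \emph{real} parts; this forces it to control $\int_0^{\ell_2}|\lambda_n p^n|^2dx$ as an intermediate quantity (from which $\textsf{P}^n\to 0$ follows) and to estimate the nonlocal term by splitting $\int_0^1+\int_1^\infty$ in $\xi$. You instead keep the first-order resolvent form, use $\overline{\textsf{V}^n}$ and $\overline{\textsf{P}^n}$ as multipliers, and take \emph{imaginary} parts, which turns the four pairings into a small linear system in $\int|v^n_x|^2$, $\mathrm{Re}\int p^n_x\overline{v^n_x}$, $\int|p^n_x|^2$, $\int|\textsf{P}^n|^2$ that decouples cleanly via $\chi_1=\chi-\gamma^2\beta>0$; the nonlocal term is disposed of in one Cauchy--Schwarz step with $C(\alpha,\eta)$ from Lemma \ref{L2.3} (using that both $\int\!\!\int(|\xi|^2+\eta)|\varphi^n|^2$ and $\int\!\!\int|\varphi^n|^2$ vanish, the latter being exactly why Lemma \ref{lemma1-strong} proves the fifth limit separately for $\eta=0$). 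Your treatment of the trace $z^n_x(0)$ via the full $H^2$-convergence of $z^n$ and the embedding $H^2\hookrightarrow C^1$ is also a clean alternative to the paper's Gagliardo--Nirenberg interpolation, and reaches the same conclusion \eqref{z_x}. Two small points worth flagging, neither of which distinguishes you from the paper: the step $\gamma\beta\int|p^n_x|^2=o(1)\Rightarrow\int|p^n_x|^2\to 0$ silently uses $\gamma\neq 0$ (the paper's derivation of \eqref{e8} does too), and you should say explicitly that $\mathrm{Re}\int v^n_x\overline{p^n_x}=\mathrm{Re}\int p^n_x\overline{v^n_x}\to 0$ before reusing it in the last two pairings (or simply bound it by $\|v^n_x\|\,\|p^n_x\|$ with $\|p^n_x\|$ bounded).
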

\begin{proof}
Multiply \eqref{e1} by $\overline{v}$,  integrate over $(0,\ell_2)$ and take the real part, we obtain
\begin{equation}\label{e3}
\begin{array}{ll}
\displaystyle
\rho \int_0^{\ell_2}|\la_nv^n|^2dx-\chi_1\int_0^{\ell_2}|v_x^n|dx-\Re\left(\chi_1 v_x^n(0)\overline{v^n}(0)\right)+\Re\left(\gamma\mu \int_0^{\ell_2}\la_n p^n \la_n \overline{v}^ndx\right)\\
\displaystyle
-\Re\left(\mathfrak{C}\int_0^{\ell_2}\overline{v^n}\int_{\mathbb{R}}\mu(\xi)\varphi^n(x,\xi)d\xi \right)=\Re\left( \int_0^{\ell_2}F^1_n\overline{v^n}dx\right)+\Re\left(\gamma\int_0^{\ell_2} F^2_n\overline{v^n}dx\right).
\end{array}
\end{equation}
We have that $\|U^n\|_{\mathcal{H}}=1$ and $\|F_n\|_{\mathcal{H}}\xrightarrow[n\to\infty] {} 0$ which implies that $(\la p)$ is uniformly bounded in $L^2(0,\ell_2)$. Thus, using Cauchy-Schwarz inequality, Lemma \ref{lemma1-strong}, and the fact that $\la_n\xrightarrow[n\to\infty] {} \omega$, we obtain the following
\begin{equation}\label{e4}
\left|\Re\left(\gamma\mu \int_0^{\ell_2}\la_n p^n \la_n \overline{v^n}dx\right)\right|\leq \gamma\mu\left(\int_0^{\ell_2}|\la_n p^n|^2dx\right)^{\frac{1}{2}}\left(\int_0^{\ell_2}|\la_n v^n|^2dx\right)^{\frac{1}{2}}\xrightarrow[n\to\infty] {} 0,
\end{equation}

\begin{equation}\label{e5*}
\Re\left( \int_0^{\ell_2}F^1_n\overline{v^n}dx\right)\leq \rho \left(\int_0^{\ell_2}2|f_n^3|^2dx+2|\la_n|^2 \int_0^{\ell_2}|f_n^2|^2dx\right)^{\frac{1}{2}} \left(\int_0^{\ell_2}|v^n|^2dx\right)^{\frac{1}{2}}
\xrightarrow[n\to\infty] {} 0,
\end{equation}
Similarly,
\begin{equation}\label{e5}
\Re\left(\gamma\int_0^{\ell_2} F^2_n\overline{v^n}dx\right)\xrightarrow[n\to\infty] {} 0,
\end{equation}
Now, for the last term on the left hand side of \eqref{e3}, we have
\begin{equation}
\begin{array}{ll}
\displaystyle
\left|\Re\left(\mathfrak{C}\int_0^{\ell_2}\overline{v^n}\int_{\mathbb{R}}\mu(\xi)\varphi^n(x,\xi)d\xi \right)\right|\leq
2 \mathfrak{C}\int_0^{\ell_2} |v^n|\int_{0}^{\infty}\mu(\xi)|\varphi^n(x,\xi)|d\xi dx\leq 2\mathfrak{C}\left(\mathfrak{M}_1+\mathfrak{M}_2\right),
\end{array}
\end{equation}
where 
$$\mathfrak{M}_1=\int_0^{\ell_2}\overline{v^n}\left(\int_{0}^1\mu(\xi)\varphi^n(x,\xi)d\xi\right)dx \quad\text{and}\quad \mathfrak{M}_2=\int_0^{\ell_2}\overline{v^n}\left(\int_{1}^{\infty}\mu(\xi)\varphi^n(x,\xi)d\xi\right)dx, $$

\begin{equation*}
\begin{array}{ll}
\displaystyle
\left|\mathfrak{M}_1\right| \leq  \int_0^{\ell_2} |v^n|\left(\int_{0}^{1}\frac{\mu(\xi)}{\sqrt{\xi^2+1}}\sqrt{\xi^2+1}| \varphi^n(x,\xi)|d\xi\right) dx\\
\displaystyle
 \leq  \mathfrak{J}_1(\alpha,\eta) \left(\int_0^{\ell_2}|v^n|^2dx\right)^{\frac{1}{2}} \left( \int_0^{\ell_2}\int_{0}^{1}(\xi^2+1)\abs{\varphi^n(x,\xi)}^2d\xi dx\right)^{\frac{1}{2}},
\end{array}
\end{equation*}

\begin{equation*}
\begin{array}{ll}\displaystyle
\left|\mathfrak{M}_2\right| \leq \int_0^{\ell_2} |v^n|\left(\int_{1}^{\infty}\frac{\abs{\mu(\xi)}\sqrt{\xi^2+\eta}}{\sqrt{\xi^2+\eta}}|\varphi^n(x,\xi)|d\xi\right)^2 dx \\
\displaystyle
\leq \mathfrak{J}_2(\alpha,\eta) \left(\int_0^{\ell_2}|v^n|^2dx\right)^{\frac{1}{2}} \left( \int_0^{\ell_2}\int_{1}^{\infty}(\xi^2+\eta)\abs{\varphi^n(x,\xi)}^2d\xi dx\right)^{\frac{1}{2}},
\end{array}
\end{equation*}
where $\displaystyle\mathfrak{J}_1(\alpha)=\left(\int_0^1\frac{|\xi|^{2\alpha-1}}{\xi^2+1}d\xi\right)^{\frac{1}{2}}$ and $\displaystyle\mathfrak{J}_2(\alpha,\eta)=\left(\int_{1}^{\infty}\frac{\abs{\xi}^{2\alpha-1}}{\abs{\xi}^2+\eta}d\xi\right)^{\frac{1}{2}}$.  We have  $\displaystyle \frac{\xi^{2\alpha-1}}{\xi^2+1}\isEquivTo {0} \frac{1}{\xi^{1-2\alpha}}$ and  $\displaystyle \frac{\abs{\xi}^{2\alpha-1}}{\abs{\xi}^2+\eta}\isEquivTo{+\infty}\frac{1}{\abs{\xi}^{3-2\alpha}}$.
Since $0<\alpha<1$ and $\eta\geq 0$, then $\mathfrak{J}_1(\alpha)$  and $\mathfrak{J}_2(\alpha,\eta)$ are well defined. Thus,  using Lemma \ref{lemma1-strong} and that $\|U\|_{\mathcal{H}}=1$, we deduce that 
$$|\mathfrak{M}_1|\xrightarrow[n\to\infty] {} 0\quad\text{and}\quad|\mathfrak{M}_2|\xrightarrow[n\to\infty] {} 0.
$$
Therefore,
 \begin{equation}\label{e6}
\begin{array}{ll}
\displaystyle
\left|\Re\left(\mathfrak{C}\int_0^{\ell_2}\overline{v^n}\int_{\mathbb{R}}\mu(\xi)\varphi^n(x,\xi)d\xi \right)\right|\xrightarrow[n\to\infty] {} 0.
\end{array}
\end{equation}
Gagliardo-Nirenberg inequality implies that
\begin{equation}
|z^n_x(0)|\lesssim \|z^n_{xx}\|^{\frac{1}{2}}_{L^2(0,\ell_2)} \|z^n_{x}\|^{\frac{1}{2}}_{L^2(0,\ell_2)}+ \|z^n_{x}\|_{L^2(0,\ell_2)}.
\end{equation}
By using the fact that $ \|z^n_{xx}\|^{\frac{1}{2}}_{L^2(0,\ell_2)}\leq |\omega |^{\frac{1}{2}} \|z^n\|^{\frac{1}{2}}_{L^2(0,\ell_2)}+\|f^1_n\|^{\frac{1}{2}}_{L^2(0,\ell_2)}$ and Lemma \ref{lemma1-strong} in the above inequality we obtain that 
\begin{equation}\label{z_x}
|z^n_x(0)|\xrightarrow[n\to\infty] {} 0.
\end{equation}
Now, from the transmission conditions we have 
\begin{eqnarray*}
\chi_1 v^n_x(0)+\gamma(\gamma\beta v^n_x(0)-\beta p^n_x(0))=\kappa z^n_x(0),\\
\beta p^n_x(0)=\gamma\beta v^n_x(0).
\end{eqnarray*}
Combining the above conditions with the result in \eqref{z_x} we deduce that 
\begin{equation}\label{v_x}
|v^n_x(0)|\xrightarrow[n\to\infty] {}  0  \quad\text{and}\quad |p^n_x(0)|\xrightarrow[n\to\infty] {}0.
\end{equation}
Thus, using the fact that $\displaystyle|v^n(0)|^2\leq \int_0^{\ell_2}|v^n_x|^2dx\leq \|U^n\|_{\mathcal{H}}=1$, and from \eqref{v_x} we obtain that
\begin{equation}\label{e7}
\left|\Re\left(\chi_1 v_x^n(0)\overline{v^n}(0)\right)\right|\xrightarrow[n\to\infty] {}  0.
\end{equation}
Thus, using \eqref{e4}-\eqref{e5}, \eqref{e6}, and  \eqref{e7}, we obtain the first limit in \eqref{Est2}.\\
Next, multiply \eqref{e1} by $\overline{p}$,  integrate over $(0,\ell_2)$ and take the real part, we obtain
\begin{equation}
\begin{array}{ll}
\displaystyle
\Re\left(\rho\int_0^{\ell_2}\la v^n\la \overline{p^n}dx\right)-\Re\left(\chi_1\int_0^{\ell_2}v^n_x\overline{p^n_x}dx\right)-\Re\left(\chi_1 v_x(0)\overline{p^n}(0)\right)-\gamma\mu \int_0^{\ell_2}|\la_n p^n|^2dx\\
\displaystyle
-\Re\left(\mathfrak{C}\int_0^{\ell_2}\overline{p^n}\int_{\mathbb{R}}\mu(\xi)\varphi^n(x,\xi)d\xi \right)=\Re\left( \int_0^{\ell_2}F^1_n\overline{v^n}dx\right)+\Re\left(\gamma\int_0^{\ell_2} F^2_n\overline{p^n}dx\right).
\end{array}
\end{equation}
The terms of the above equation can be estimated by using similar calculations as before and using the fact that $\la p$ and $p_x$ are uniformly bounded in $L^2(0,\ell_2)$, Lemma \ref{lemma1-strong}, the first limit in \eqref{lemma2-strong},  and \eqref{v_x}, we obtain that
\begin{equation}\label{e8}
\int_0^{\ell_2}|\la_n p^n|^2dx\xrightarrow[n\to\infty] {}  0.
\end{equation}
From equation \eqref{pro4}$_4$, and by using that $\|F_n\|\xrightarrow[n\to\infty] {}  0$, we get 
\begin{equation}
\int_0^{\ell_2}| \textsf{P}^n|^2dx\leq \int_0^{\ell_2}|\la_n p^n|^2dx+\int_0^{\ell_2}| f^4_n|^2dx\xrightarrow[n\to\infty] {}  0.
\end{equation}
Thus, the second limit in \eqref{Est2} is proved.  Finally,  multiply \eqref{e2} by $\overline{p}$ and integrate over $(0,\ell_2)$, we get
\begin{equation}\label{e9}
\begin{array}{ll}
\displaystyle
\mu\chi \int_0^{\ell_2}|\la_n p^n|^2dx-\chi_1\beta\int_0^{\ell_2}|p_x^n|^2dx-\Re\left(\chi_1\beta p^n_x(0)p^n(0)\right)+\Re\left(\rho\beta\gamma\int_0^{\ell_2} \la v^n\la \overline{p^n}dx\right)\\
\displaystyle
-\Re\left(\gamma\beta\mathfrak{C}\int_0^{\ell_2}\overline{p^n}\int_{\mathbb{R}}\mu(\xi)\varphi(x,\xi)d\xi \right)=\Re\left( \gamma\beta
\int_0^{\ell_2}F^1_n\overline{p^n}dx\right)+\Re\left(\gamma\int_0^{\ell_2} F^2_n\overline{p^n}dx\right).
\end{array}
\end{equation}
From \eqref{e8} and the first limit in \eqref{Est2}, we get
\begin{equation}\label{e10}
\mu\chi \int_0^{\ell_2}|\la_n p^n|^2dx\xrightarrow[n\to\infty] {}  0\quad\text{and}\quad \Re\left(\rho\beta\gamma\int_0^{\ell_2} \la v^n\la \overline{p^n}dx\right)\xrightarrow[n\to\infty] {}  0.
\end{equation}
By using \eqref{v_x} and the fact that $|p(0)|\leq \|p_x\|_{L^2(0,\ell_2)}$, we deduce that 
\begin{equation}\label{e11}
|\Re\left(\chi_1\beta p^n_x(0)p^n(0)\right)|\xrightarrow[n\to\infty] {}  0.
\end{equation}
Cauchy-Schwarz, \eqref{Est1} and the fact that $\|F_n\|_{\mathcal{H}}$, leads to 
\begin{equation}
\Re\left(\gamma\beta\mathfrak{C}\int_0^{\ell_2}\overline{p^n}\int_{\mathbb{R}}\mu(\xi)\varphi^n(x,\xi)d\xi \right)\xrightarrow[n\to\infty] {}  0,
\end{equation}
and
\begin{equation}\label{e12}
	\Re\left( \gamma\beta \int_0^{\ell_2}F^1_n\overline{p^n}dx\right)+\Re\left(\gamma\int_0^{\ell_2} F^2_n\overline{p^n}dx\right)\xrightarrow[n\to\infty] {}  0.
\end{equation}
Therefore, using \eqref{e10}-\eqref{e12} in \eqref{e10} we obtain the last limit in \eqref{Est2}.
\end{proof}

\textbf{Proof of Proposition \ref{proposition2}} From Lemmas \ref{lemma1-strong} and \ref{lemma2-strong}, we obtain that $\|U^n\|_{\mathcal{H}}\to 0$ as $n\to 0$ which contradicts that $\|U^n\|_{\mathcal{H}}=1$ in \eqref{pro3}. Then, $i\R^{\ast}\subseteq \rho(\mathcal{A})$  holds true and the proof is complete.\\

\textbf{Proof of Theorem \ref{Strong1}}
From Proposition \ref{proposition2}, we have $i\R\subseteq \rho(\mathcal{A})$ if $\eta>0$ and consequently $\sigma(\mathcal{A})\cap i\R=\emptyset$ and $i\R^{\ast}\subseteq \rho(\mathcal{A})$ if $\eta=0$ and consequently $\sigma(\mathcal{A})\cap i\R=\{0\}$  .  Therefore, according to Arendt-Batty Theorem, we get that the C$_0$-semigroup of contraction $(e^{t\mathcal{A}})_{t\geq 0}$ is strongly stable. The proof is thus complete.

\section{Polynomial Stability}\label{Polynomaial stability}
\noindent The aim of this section is to study the polynomial stability of system \eqref{209} in the case $\eta>0$.  Our main result in this part is the following theorem. 
\begin{Theorem}\label{polynomial theorem}
Assume that $\eta>0$. The $C_0-$ semigroup $(e^{t\mathcal{A}})_{t\geq 0}$ is polynomially stable; i.e. there exists constant $C>0$ such that for every $U_0\in D(\mathcal{A})$, we have 
\begin{equation}\label{Energypol}
\mathcal{E}(t)\leq \frac{C}{t^{\frac{2}{1-\alpha}}}\|U_0\|^2_{D(\mathcal{A})},\quad t>0,\,\forall\, U_0\in D(\mathcal{A}).
\end{equation}
\end{Theorem}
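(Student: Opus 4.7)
The natural tool is the Borichev--Tomilov characterization of polynomial stability for bounded $C_0$-semigroups on Hilbert spaces: since Proposition \ref{proposition2} has already established $i\mathbb{R}\subseteq \rho(\mathcal{A})$ when $\eta>0$, the bound \eqref{Energypol} is equivalent to the resolvent estimate
\begin{equation*}
\limsup_{|\lambda|\to\infty}\frac{1}{|\lambda|^{1-\alpha}}\bigl\|(i\lambda I-\mathcal{A})^{-1}\bigr\|_{\mathcal{L}(\mathcal{H})} < \infty.
\end{equation*}
The plan is to prove this by contradiction in the same spirit as the strong-stability proof, but tracking precise rates. Suppose there exist $\lambda_n\in\mathbb{R}$ with $|\lambda_n|\to\infty$ and $U^n=(z^n,v^n,\textsf{V}^n,p^n,\textsf{P}^n,\varphi^n)^{\top}\in D(\mathcal{A})$ with $\|U^n\|_{\mathcal{H}}=1$ such that $F_n:=(i\lambda_n I-\mathcal{A})U^n$ satisfies $|\lambda_n|^{1-\alpha}\|F_n\|_{\mathcal{H}}\to 0$; the goal is to derive $\|U^n\|_{\mathcal{H}}\to 0$ for a contradiction.

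The first step reproduces the dissipation identity of Lemma \ref{lemma201} but at the new scale: taking the real inner product with $U^n$ gives
\begin{equation*}
\kappa\int_{-\ell_1}^{0}|z^n_x|^2\,dx + \mathfrak{C}\int_{0}^{\ell_2}\!\!\int_{\mathbb{R}}(|\xi|^2+\eta)|\varphi^n|^2\,d\xi\,dx \leq \|F_n\|_{\mathcal{H}} = o\!\left(|\lambda_n|^{-(1-\alpha)}\right),
\end{equation*}
so by Poincar\'e $\|z^n\|_{L^2}^2 = o(|\lambda_n|^{-(1-\alpha)})$, and since $\eta>0$ also $\|\varphi^n\|_{L^2(\mathbb{R};L^2)}^{2}=o(|\lambda_n|^{-(1-\alpha)})$. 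The crucial second step is to upgrade this into $\|\textsf{V}^n\|_{L^2}\to 0$: rewriting \eqref{pro5}$_{6}$ as $\mu(\xi)\textsf{V}^n=(|\xi|^2+\eta+i\lambda_n)\varphi^n-f_n^{6}$, multiplying by $|\xi|(|\lambda_n|+\xi^2+\eta)^{-2}$ and integrating over $\xi$ exactly as in Lemma \ref{lemma1-strong}, but now using the explicit integrals $\mathtt{J}_1,\mathtt{J}_2,\mathtt{J}_3$ of Lemma \ref{lemI4} at the frequency $|\lambda_n|\to\infty$, the Cauchy--Schwarz inequality produces
\begin{equation*}
\|\textsf{V}^n\|_{L^2(0,\ell_2)}^{2} \leq C(|\lambda_n|+\eta)^{1-\alpha}\!\!\int_{0}^{\ell_2}\!\!\int_{\mathbb{R}}(\xi^2+\eta)|\varphi^n|^2\,d\xi\,dx + C(|\lambda_n|+\eta)^{-\alpha}\|f_n^{6}\|^{2}_{L^2(\mathbb{R};L^2)},
\end{equation*}
which is $o(1)$ thanks to the dissipation bound above. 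Equation \eqref{pro5}$_{2}$ then yields $\|\lambda_n v^n\|_{L^2}\to 0$.

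The final step is to recover $\|v^n_x\|_{L^2}$, $\|\textsf{P}^n\|_{L^2}$ and $\|p^n_x\|_{L^2}$ via the same multiplier identities as in the strong-stability section: multiply \eqref{e1} by $\overline{v^n}$ and \eqref{e2} by $\overline{p^n}$, integrate by parts, and take real parts. The interior terms vanish thanks to the rates already established; the only delicate ingredient is the boundary contribution at $x=0$, typified by $\chi_1 v^n_x(0)\overline{v^n(0)}$. Using the transmission conditions to substitute $\chi_1 v^n_x(0)=\kappa z^n_x(0)$, a Gagliardo--Nirenberg interpolation between $\|z^n_x\|_{L^2}$ and $\|z^n_{xx}\|_{L^2}\lesssim |\lambda_n|\|z^n\|_{L^2}+\|f_n^{1}\|_{L^2}$ gives $|z^n_x(0)|=O(|\lambda_n|^{\alpha/2})$, while the identity $v^n(0)=(z^n(0)+f_n^{2}(0))/(i\lambda_n)$ combined with the trace inequality yields $|v^n(0)|=o(|\lambda_n|^{-(3-\alpha)/2})$, so the product is $o(|\lambda_n|^{\alpha-3/2})\to 0$ since $\alpha<1$. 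With boundary terms under control the multiplier identities force the remaining components to zero, producing $\|U^n\|_{\mathcal{H}}\to 0$ and hence the desired contradiction. I expect the main obstacle to be the $\textsf{V}^n$ estimate of step two, where the fractional integrals $\mathtt{J}_i$ must combine to produce exactly the weight $(|\lambda_n|+\eta)^{1-\alpha}$, together with the boundary bookkeeping at the transmission interface, where the exponents barely balance and the constraint $\alpha<1$ is used in a sharp way.
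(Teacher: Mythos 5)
Your proposal is correct and follows essentially the same route as the paper: Borichev--Tomilov reduces the claim to the resolvent bound \eqref{R2}, which is verified by the identical contradiction scheme — the dissipation identity at scale $o(|\lambda_n|^{-(1-\alpha)})$, the $\textsf{V}^n$ estimate via the integrals $\mathtt{J}_1,\mathtt{J}_2,\mathtt{J}_3$ of Lemma \ref{lemI4}, and the multiplier identities with Gagliardo--Nirenberg control of the transmission boundary terms. The rates you announce ($|z^n_x(0)|=O(|\lambda_n|^{\alpha/2})$, $|v^n(0)|=o(|\lambda_n|^{-(3-\alpha)/2})$) match those obtained in Lemmas \ref{lemma1 pol} and \ref{lemma2 pol}.
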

\noindent According to Theorem of Borichev and Tomilov \cite{Borichev01} (see also \cite{RaoLiu01} and \cite{Batty01}), in order to prove Theorem \ref{polynomial theorem} we need to prove that the following two conditions hold:
\begin{equation}\label{R1}\tag{${\rm{P_1}}$}
i\R\subset \rho(\mathcal{A}),
\end{equation}
and
\begin{equation}\label{R2}\tag{${\rm{P_2}}$}
\limsup_{|\la|\to\infty}\frac{1}{|\la|^{1-\alpha}}\left\|(i\la I-\mathcal{A})^{-1}\right\|_{\mathcal{L}(\mathcal{H})}<\infty
\end{equation}
are satisfied. \\
Note that,  when $\eta > 0,$ from Proposition \ref{proposition2} we have $i\mathbb{R}\subseteq\rho({\mathcal{A}}).$ Therefore, according to the Borichev-Tomilov Theorem, polynomial stability is equivalent to having the conditions \eqref{R1} and \eqref{R2} satisfied.\\
\noindent Since condition \eqref{R1} is already proved (see Proposition  \ref{proposition2}), we still need to prove condition \eqref{R2}. For this purpose we will use an argument of contradiction. Suppose that \eqref{R2} is false, then there exists $$\left\{\left(\la_n,U_n:=(z^n,\,v^n,\,\textsf{V}^n,\,p^n,\,\textsf{P}^n,\,\varphi^n)^\top\right)\right\}\subset \R^{\ast}\times D(\mathcal{A})$$ with 
\begin{equation}\label{pol1}
\abs{\la_n}\to +\infty \quad \text{and}\quad \|U_n\|_{\mathcal{H}}=\|(z^n,\,v^n,\,\textsf{V}^n,\,p^n,\,\textsf{P}^n,\,\varphi^n)\|_{\mathcal{H}}=1, 
\end{equation}
such that 
\begin{equation}\label{pol2}
\left(\la_n^{1-\alpha}\right)(i\la_n I-\mathcal{A})U^n=F_n:=(f^1_n, f^2_n, f^3_n, f^4_n, f^5_n, f^6_n(\cdot,\xi))^{\top}\to 0\quad\text{in}\quad\mathcal{H}.
\end{equation}
For simplicity, we drop the index $n$. Equivalently, from \eqref{pol2}, we have 
\begin{equation}\label{pol3}
\left\{
\begin{array}{ll}
\displaystyle
i\lambda z - \kappa z_{xx} =\frac{f^1}{\la^{1-\alpha}},  \\[0.1in]
\displaystyle
i\lambda v - \textsf{V} = \frac{f^2}{\la^{1-\alpha}} ,  \\[0.1in]
\displaystyle
i\lambda\rho\textsf{V} - \chi v_{xx} + \gamma \beta p_{xx} + \mathfrak{C} \displaystyle\int_{\mathbb{R}}\mu(\xi)\varphi(x,\xi)d\xi = \rho \frac{f^3}{\la^{1-\alpha}} ,  \\[0.1in]
\displaystyle
i\lambda p - \textsf{P} = \frac{f^4}{\la^{1-\alpha}},  \\[0.1in]
\displaystyle
i\lambda\mu\textsf{P} - \beta p_{xx} + \gamma \beta v_{xx} =\mu \frac{f^5}{\la^{1-\alpha}} , \\[0.1in]
\displaystyle
(|\xi|^{2} + \eta + i\lambda)\varphi(\xi) - \mu(\xi)\textsf{V} = \frac{f^6(\cdot,\xi)}{\la^{1-\alpha}} ,\quad \forall \,\xi\in \mathbb{R}.
\end{array}\right.
\end{equation}
Here we will check the condition \eqref{R2} by finding a contradiction with \eqref{pol1} by showing $\|U\|_{\mathcal{H}}=o(1)$. For clarity, we divide the proof into several Lemmas.
\begin{Lemma}\label{lemma1 pol}
Assume that $\eta>0$. Then, the  solution $(z,\,v,\,\textsf{V},\,p,\,\textsf{P},\,\varphi)^{\top}\in D(\mathcal{A})$ of the system \eqref{pol3} satisfies the following asymptotic behavior estimations 
\begin{equation}\label{Pol Est1}
\begin{array}{ll}
\displaystyle
\int_{-\ell_{1}}^{0} |z|^2dx=\frac{o(1)}{\la^{1-\alpha}}, \quad\mathfrak{C}\int_0^{\ell_2}\int_{\mathbb{R}}(|\xi|^{2} + \eta)|\varphi(x,\,\xi)|^2 d\xi dx=\frac{o(1)}{\la^{1-\alpha}},\\[0.1in]\displaystyle
 \int_0^{\ell_2}|\textsf{V}|^2dx=o(1),
\quad\text{and}\quad
\int_0^{\ell_2}|\lambda v|^2dx=o(1).
\end{array}
\end{equation}
\end{Lemma}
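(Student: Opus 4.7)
\textbf{Proof plan for Lemma \ref{lemma1 pol}.} The plan is to follow the same three-step scheme as in Lemma \ref{lemma1-strong}, but carefully tracking the extra weight $\la^{1-\alpha}$ appearing on the right-hand side of \eqref{pol2}. The first step is a dissipation identity: taking the $\mathcal{H}$-inner product of \eqref{pol2} with $U$ and using the formula \eqref{405} for $\Re\langle\mathcal{A}U,U\rangle_{\mathcal{H}}$ together with $\|U\|_{\mathcal{H}}=1$ and $\|F\|_{\mathcal{H}}=o(1)$, I obtain
\begin{equation*}
\kappa\int_{-\ell_1}^{0}|z_x|^2\,dx+\mathfrak{C}\int_0^{\ell_2}\int_{\R}(|\xi|^2+\eta)|\varphi(x,\xi)|^2\,d\xi\,dx\leq \frac{\|F\|_{\mathcal{H}}\|U\|_{\mathcal{H}}}{\la^{1-\alpha}}=\frac{o(1)}{\la^{1-\alpha}}.
\end{equation*}
This yields the second estimate of \eqref{Pol Est1} directly, and the first one follows from Poincar\'e's inequality on $(-\ell_1,0)$ since $z(-\ell_1)=0$.

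The second step, which is the main obstacle, is the bound on $\textsf{V}$. From \eqref{pol3}$_6$ I have the pointwise inequality $\mu(\xi)|\textsf{V}|\leq (|\la|+\xi^2+\eta)|\varphi(x,\xi)|+\la^{-(1-\alpha)}|f^6(x,\xi)|$. Multiplying by $|\xi|(|\la|+\xi^2+\eta)^{-2}$, integrating in $\xi\in\R$, and applying Cauchy--Schwarz using the kernels of Lemma \ref{lemI4}, I arrive at
\begin{equation*}
\mathtt{J}_1(\la,\eta,\alpha)\,|\textsf{V}|\leq \mathtt{J}_2(\la,\eta)\Big(\int_{\R}|\xi\varphi|^2\,d\xi\Big)^{1/2}+\frac{\mathtt{J}_3(\la,\eta)}{\la^{1-\alpha}}\Big(\int_{\R}|f^6|^2\,d\xi\Big)^{1/2}.
\end{equation*}
Squaring, integrating over $(0,\ell_2)$, and using the explicit asymptotics $(\mathtt{J}_2/\mathtt{J}_1)^2\asymp(|\la|+\eta)^{1-\alpha}$ and $(\mathtt{J}_3/\mathtt{J}_1)^2\asymp(|\la|+\eta)^{-\alpha}$ from Lemma \ref{lemI4}, I conclude
\begin{equation*}
\int_0^{\ell_2}|\textsf{V}|^2\,dx\lesssim (|\la|+\eta)^{1-\alpha}\cdot\frac{o(1)}{\la^{1-\alpha}}+\frac{(|\la|+\eta)^{-\alpha}}{\la^{2(1-\alpha)}}\,o(1)=o(1),
\end{equation*}
where the $\varphi$-term is controlled by the first step and the $f^6$-term by $\|F\|_{\mathcal{H}}=o(1)$.

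The final estimate is immediate: from \eqref{pol3}$_2$ I have $|\la v|^2\leq 2|\textsf{V}|^2+2\la^{-2(1-\alpha)}|f^2|^2$, so integrating over $(0,\ell_2)$ and using the previous step together with $\|f^2\|_{L^2}=o(1)$ gives $\int_0^{\ell_2}|\la v|^2\,dx=o(1)$. The real content of the lemma lies in the $\textsf{V}$ step: the $\la^{1-\alpha}$ amplification produced by $(\mathtt{J}_2/\mathtt{J}_1)^2$ cancels exactly the $\la^{-(1-\alpha)}$ decay of $\int(|\xi|^2+\eta)|\varphi|^2$ from dissipation. This precise matching is what motivates the weight $\la^{1-\alpha}$ in \eqref{pol2} and, via the Borichev--Tomilov criterion, produces the rate $t^{-2/(1-\alpha)}$ announced in \eqref{Energypol}.
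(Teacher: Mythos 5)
Your proposal is correct and follows essentially the same three-step scheme as the paper: the dissipation identity from the inner product of \eqref{pol2} with $U$ (plus Poincar\'e) for the first two estimates, the weighted integration of \eqref{pol3}$_6$ against $|\xi|(|\la|+\xi^2+\eta)^{-2}$ with the kernels $\mathtt{J}_1,\mathtt{J}_2,\mathtt{J}_3$ of Lemma \ref{lemI4} for the $\textsf{V}$ bound, and \eqref{pol3}$_2$ for the last one. Your explicit tracking of $(\mathtt{J}_2/\mathtt{J}_1)^2\asymp(|\la|+\eta)^{1-\alpha}$ and $(\mathtt{J}_3/\mathtt{J}_1)^2\asymp(|\la|+\eta)^{-\alpha}$ matches the paper's computation (and is in fact slightly cleaner on the $f^6$ term, where the paper appears to drop a factor of $\la^{-(1-\alpha)}$ that is harmless for the conclusion).
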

\begin{proof}
For the clarity of the proof, we divide it into several steps.\\ 
\textbf{Step 1.} Taking the inner product of \eqref{pol2} with $U$ in $\mathcal{H}$, then using \eqref{pol1} and the fact that $U$ is uniformly bounded in $\mathcal{H}$, we get 
\begin{equation*}
\kappa \int_{-\ell_{1}}^{0} |z_{x}|^2dx + \mathfrak{C}\int_0^{\ell_2}\int_{\mathbb{R}}(|\xi|^{2} + \eta)|\varphi(x,\,\xi)|^2 d\xi dx=-\Re(\left<\mathcal{A}U,U\right>_{\mathcal{H}})\leq  \|F\|_{\mathcal{H}}\|U\|_{\mathcal{H}}=\frac{o(1)}{\la^{1-\alpha}}.
\end{equation*}
Then,  the above results lead to
\begin{equation*}
\int_{-\ell_{1}}^{0} |z_{x}|^2dx=\frac{o(1)}{\la^{1-\alpha}}\quad\text{and}\quad \mathfrak{C}\int_0^{\ell_2}\int_{\mathbb{R}}(|\xi|^{2} + \eta)|\varphi(x,\,\xi)|^2 d\xi dx=\frac{o(1)}{\la^{1-\alpha}}.
\end{equation*}
By using Poincar\'e inequality and the first estimation in the above equation, we obtain the first estimation in \eqref{Pol Est1}. \\
\noindent\textbf{Step 2.} Our aim here is to prove the third estimation in \eqref{Pol Est1}.\\
From \eqref{pol3}$_6$, we get 
$$
\mu(\xi)\abs{\textsf{V}}\leq \left(\abs{\la}+\xi^2+\eta\right)\abs{\varphi(x,\xi)}+\frac{1}{|\la|^{1-\alpha}}\abs{f^6(x,\xi)}.
$$
Multiplying the above inequality by $\left(\abs{\la_n}+\xi^2+\eta\right)^{-2}\abs{\xi}$,  integrating over $\R$,  and proceeding as in Lemma \ref{lemma1-strong},  and by using the first and second estimations in \eqref{Pol Est1},  we get 
\begin{equation*}
\begin{array}{lll}
\displaystyle
\int_{0}^{l_2}\abs{\textsf{V}}^2dx
&\displaystyle\leq  \frac{1}{c_1(\abs{\la}+\eta)^{\alpha-1}}\int_{0}^{l_2}\int_{\R}\abs{\xi\varphi(x,\xi)}^2d\xi dx+\frac{\sqrt{\pi}}{4|\la|^{1-\alpha}}\frac{1}{c_1\left(\abs{\la}+\eta\right)^{\alpha}}\int_{\R}\abs{f^6(x,\xi)}^2d\xi dx\\
&\displaystyle \leq
\displaystyle
 \frac{1}{c_1(\abs{\la}+\eta)^{\alpha-1}}\frac{o(1)}{|\la|^{1-\alpha}}+\frac{\sqrt{\pi}}{4|\la|^{1-\alpha}}\frac{1}{c_1\left(\abs{\la}+\eta\right)^{\alpha}}\frac{o(1)}{|\la|^{1-\alpha}}.
\end{array}
\end{equation*}
Hence,  from the above equation we get the third estimation in \eqref{Pol Est1}. \\
\textbf{Step 3.}  From equation \eqref{pol3}$_2, $ we have that
$$
\int_{0}^{l_2}\abs{\la v}^2dx\leq 2\int_{0}^{l_2}\abs{\textsf{V}}^2dx+2\abs{\la}^{-2+2\alpha}\int_{0}^{l_2}\abs{f^2}^2dx.
 $$
Using Step 2, the fact that $\|f^2\|_{H^1_R(0,\ell_2)}=o(1)$, and that $\alpha\in (0,1)$, we get the last estimation in \eqref{Pol Est1}.
\end{proof}
\vspace{0.3cm}

\noindent Inserting \eqref{pol3}$_2$ and \eqref{pol3}$_4$ into \eqref{pol3}$_3$ and \eqref{pol3}$_5$, and proceeding in a similar way as in \eqref{E2}-\eqref{e2}, we obtain
\begin{eqnarray}
\displaystyle
\la _n^2\rho v+\chi _1v_{xx}+\la _n^2\gamma \mu p-\mathfrak{C}\int_{\mathbb{R}}\mu(\xi)\varphi(x,\xi)d\xi =\mathcal{G}^1+\gamma \mathcal{G}^2 ,\label{p1}\\
\displaystyle
\la _n^2 \mu\chi p+\chi_1\beta p_{xx}+\la _n^2\rho\gamma\beta v-\gamma\beta\mathfrak{C}\int_{\mathbb{R}}\mu(\xi)\varphi(x,\xi)d\xi =\gamma\beta \mathcal{G}^1+\chi \mathcal{G}^2,\label{p2}
\end{eqnarray}
where $\mathcal{G}^1=-\la^{-(1-\alpha)}(\rho f^3+i\la\rho f^2)$ and $\mathcal{G}^2=-\la^{-(1-\alpha)}(\mu f^5+i\la\mu f^4)$ .

\begin{Lemma}\label{lemma2 pol}
Assume that $\eta>0$. Then, the  solution $(z,\,v,\,\textsf{V},\,p,\,\textsf{P},\,\varphi)^{\top}\in D(\mathcal{A})$ of the system \eqref{pol3} satisfies the following asymptotic behavior estimations 
\begin{equation}\label{Pol Est2}
\begin{array}{ll}
\displaystyle
\int_{0}^{\ell_2} |v_x|^2dx=o(1),\quad \int_0^{\ell_2}| \textsf{P}|^2dx=o(1)
\quad\text{and}\quad
 \int_0^{\ell_2}|p_x|^2dx=o(1).
\end{array}
\end{equation}
\end{Lemma}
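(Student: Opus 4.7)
The plan is to mimic the structure of the proof of Lemma \ref{lemma2-strong}, but tracking carefully how the $\la^{-(1-\alpha)}$ weight on the right-hand sides of \eqref{pol3} propagates through the multiplier identities. The three estimates will be obtained in the order stated: first $\int_0^{\ell_2}|v_x|^2=o(1)$, then $\int_0^{\ell_2}|\la p|^2=o(1)$ (which yields the $\textsf{P}$ estimate via \eqref{pol3}$_4$), and finally $\int_0^{\ell_2}|p_x|^2=o(1)$.

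First, I would multiply \eqref{p1} by $\overline{v}$ and integrate over $(0,\ell_2)$, take the real part, and get an identity whose leading negative term is $-\chi_1\int_0^{\ell_2}|v_x|^2dx$, balanced against $\rho\int|\la v|^2dx$ (which is $o(1)$ by Lemma \ref{lemma1 pol}), a cross term $\Re(\gamma\mu\int\la p\,\la\bar v)$ estimated by Cauchy--Schwarz using that $\la p$ is uniformly bounded in $L^2$ times the $o(1)$ bound on $\la v$, a boundary term $\chi_1 v_x(0)\overline{v(0)}$, a coupling term with $\varphi$, and the source terms coming from $\mathcal{G}^1+\gamma\mathcal{G}^2$. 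For the boundary term, the Gagliardo--Nirenberg trick used to obtain \eqref{z_x} still works here: from \eqref{pol3}$_1$ one has $\|z_{xx}\|\lesssim |\la|\|z\|+|\la|^{-(1-\alpha)}\|f^1\|$, and combined with the first estimate of Lemma \ref{lemma1 pol} ($\|z\|^2=o(1)|\la|^{-(1-\alpha)}$) plus Gagliardo--Nirenberg, one gets $|z_x(0)|=o(1)$. The transmission conditions \eqref{pol3}-boundary then force $v_x(0)=o(1)$ and $p_x(0)=o(1)$, and Poincar\'e (via $H^1_R$) gives $|v(0)|\le \|v_x\|\le \|U\|_{\mathcal{H}}^{1/2}$ which is bounded, so the boundary term is $o(1)$.

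The delicate point, exactly as in Lemma \ref{lemma2-strong}, is the estimate of the coupling
\[
\Re\!\Bigl(\mathfrak{C}\int_0^{\ell_2}\overline{v}\int_{\mathbb{R}}\mu(\xi)\varphi(x,\xi)\,d\xi\,dx\Bigr).
\]
I would split the inner integral over $\{|\xi|\le 1\}$ and $\{|\xi|>1\}$, write $\mu(\xi)=\frac{\mu(\xi)}{\sqrt{\xi^2+\eta}}\cdot\sqrt{\xi^2+\eta}$ (respectively with $\sqrt{\xi^2+1}$ on the small-$\xi$ region), and use that the two resulting $\xi$-integrals $\mathfrak{J}_1(\alpha)$ and $\mathfrak{J}_2(\alpha,\eta)$ are finite for $0<\alpha<1$, $\eta>0$. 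Then by Cauchy--Schwarz and the second estimate of Lemma \ref{lemma1 pol} (i.e.\ $\int\!\!\int(\xi^2+\eta)|\varphi|^2=o(1)|\la|^{-(1-\alpha)}$), combined with $\|v\|=o(|\la|^{-1})$, this term is even smaller than $o(1)$. The source terms $\int\mathcal{G}^1\bar v$, $\int\mathcal{G}^2\bar v$ are handled directly because $\mathcal{G}^j=\la^{-(1-\alpha)}(O(1)+O(\la))\cdot o(1)$ and $\|v\|$ is small, so they are $o(1)$. Combining everything gives $\int_0^{\ell_2}|v_x|^2=o(1)$.

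Next, I would multiply \eqref{p1} by $\overline{p}$ and integrate over $(0,\ell_2)$. The term $-\gamma\mu\int|\la p|^2$ appears as the extractable coercive quantity (with the right sign), while all other terms ($\rho\int \la v\,\overline{\la p}$, $-\chi_1\int v_x\overline{p_x}$, the $\varphi$-coupling, boundary terms $\chi_1 v_x(0)\overline{p(0)}$, and $\mathcal{G}^j\bar p$ terms) can be shown to be $o(1)$ using: the estimates already established ($\la v=o(1)$, $v_x=o(1)$, $v_x(0)=o(1)$, $p(0)$ bounded by $\|p_x\|\le 1$), uniform boundedness of $\la p$ and $p_x$, and the same $\varphi$-splitting argument as above. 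This yields $\int_0^{\ell_2}|\la p|^2=o(1)$, and then \eqref{pol3}$_4$ together with $\|f^4\|=o(1)|\la|^{-(1-\alpha)}$ immediately gives $\int_0^{\ell_2}|\textsf{P}|^2=o(1)$.

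Finally, multiply \eqref{p2} by $\overline{p}$ and integrate. The coercive term $-\chi_1\beta\int|p_x|^2$ is balanced by $\mu\chi\int|\la p|^2$ (just shown to be $o(1)$), $\rho\beta\gamma\int\la v\,\overline{\la p}$ (Cauchy--Schwarz with two $o(1)$ factors), the boundary term $\chi_1\beta p_x(0)\overline{p(0)}=o(1)$ (since $p_x(0)=o(1)$ and $p(0)$ bounded), the $\varphi$-coupling (treated as before), and the $\mathcal{G}^j\bar p$ source terms (all $o(1)$). This delivers the last estimate $\int_0^{\ell_2}|p_x|^2=o(1)$. The main obstacle throughout is not any single estimate but bookkeeping: one must verify that the factor $|\la|^{-(1-\alpha)}$ appearing on the right-hand sides of \eqref{pol3} is always strong enough to beat the growth of $|\la|$ that shows up in the source terms $\mathcal{G}^1,\mathcal{G}^2$ and in the Gagliardo--Nirenberg estimate for $z_x(0)$; since $\alpha\in(0,1)$, these combinations indeed all give $o(1)$.
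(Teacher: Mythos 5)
Your overall architecture matches the paper's: the same three multiplier identities (multiply \eqref{p1} by $\overline{v}$, then by $\overline{p}$, then \eqref{p2} by $\overline{p}$), the same order of conclusions, and essentially the same treatment of the cross terms, the $\varphi$-coupling, and the sources $\mathcal{G}^1,\mathcal{G}^2$. There is, however, a genuine gap in your treatment of the boundary terms. From $\|z_{xx}\|_{L^2}\lesssim|\la|\,\|z\|_{L^2}+|\la|^{-(1-\alpha)}\|f^1\|_{L^2}$ together with $\|z\|_{L^2}^2=o(1)|\la|^{-(1-\alpha)}$ and $\|z_x\|_{L^2}^2=o(1)|\la|^{-(1-\alpha)}$, Gagliardo--Nirenberg gives
\[
|z_x(0)|\lesssim\|z_{xx}\|^{\frac12}\|z_x\|^{\frac12}+\|z_x\|=o(1)\,|\la|^{\frac{\alpha}{2}},
\]
\emph{not} $o(1)$: unlike the strong-stability regime where $\la_n\to\omega$ stays bounded, here $|\la|\to\infty$ and the factor $|\la|^{\alpha/2}$ genuinely grows. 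Consequently $|v_x(0)|$ and $|p_x(0)|$ are only $o(1)|\la|^{\alpha/2}$, and your crude trace bounds $|v(0)|\lesssim\|v_x\|=O(1)$ and $|p(0)|\lesssim\|p_x\|=O(1)$ leave the boundary products of size $o(1)|\la|^{\alpha/2}$, which is not controlled.

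To close the argument you need the refined decay of the traces that the paper extracts: from \eqref{pol3}$_2$ and the transmission condition $\textsf{V}(0)=z(0)$ with $|z(0)|\le\|z_x\|=o(1)|\la|^{-(1-\alpha)/2}$ one gets $|v(0)|\le|\la|^{-1}|\textsf{V}(0)|+|\la|^{-(2-\alpha)}|f^2(0)|=o(1)|\la|^{-\frac{3-\alpha}{2}}$; and Gagliardo--Nirenberg with the uniform $L^2$-bound on $\la p$ gives $|p(0)|\lesssim\|p_x\|^{1/2}\|p\|^{1/2}+\|p\|=O(|\la|^{-1/2})$. With these, $|v_x(0)\overline{v(0)}|=o(1)|\la|^{\alpha-\frac32}$ and $|v_x(0)\overline{p(0)}|,\,|p_x(0)\overline{p(0)}|=o(1)|\la|^{\frac{\alpha-1}{2}}$, all of which are $o(1)$ precisely because $\alpha<1$. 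So your closing remark that the bookkeeping works out is correct in substance, but the place where it is delicate is exactly the step you glossed over: the trace $z_x(0)$ itself diverges, and only its products with the decaying traces $v(0)$ and $p(0)$ are small.
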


\begin{proof}
Multiply \eqref{p1} by $\overline{v}$,  integrate over $(0,\ell_2)$ and take the real part, we obtain
\begin{equation}\label{p3}
\begin{array}{ll}
\displaystyle
\rho \int_0^{\ell_2}|\la v|^2dx-\chi_1\int_0^{\ell_2}|v_x|^2dx-\Re\left(\chi_1 v_x(0)\overline{v}(0)\right)+\Re\left(\gamma\mu \int_0^{\ell_2}\la_n p \la \overline{v}dx\right)\\
\displaystyle
-\Re\left(\mathfrak{C}\int_0^{\ell_2}\overline{v}\int_{\mathbb{R}}\mu(\xi)\varphi(x,\xi)d\xi \right)=\Re\left( \int_0^{\ell_2} \mathcal{G}^1\overline{v}dx\right)+\Re\left(\gamma\int_0^{\ell_2} \mathcal{G}^2\overline{v}dx\right).
\end{array}
\end{equation}
We have that $\|U^n\|_{\mathcal{H}}=1$ and $\|F_n\|_{\mathcal{H}} =o(1)$ which implies that $\la p$ is uniformly bounded in $L^2(0,\ell_2)$. Thus, using Cauchy-Schwarz inequality,  and Lemma \ref{lemma1 pol}, we obtain the following
\begin{equation}\label{p4}
\left|\Re\left(\gamma\mu \int_0^{\ell_2}\la p \la  \overline{v} dx\right)\right|\leq \gamma\mu\left(\int_0^{\ell_2}|\la p|^2dx\right)^{\frac{1}{2}}\left(\int_0^{\ell_2}|\la v|^2dx\right)^{\frac{1}{2}}=o(1),
\end{equation}
\begin{equation}\label{p5}
\left|\Re\left( \int_0^{\ell_2}\mathcal{G}^1\overline{v}dx\right)\right|\leq \frac{\rho}{\la^{1-\alpha}}\left(\int_0^{\ell_2}\left(2|f^3|^2+2|f^2|^2\right)dx\right)^{\frac{1}{2}}\left(\int_0^{\ell_2}|v|^2dx\right)^{\frac{1}{2}}=\frac{o(1)}{\la ^{1-\alpha}},
\end{equation}
similarly we can obtain
\begin{equation}\label{p6}
\left|\Re\left(\gamma\int_0^{\ell_2} \mathcal{G}^2\overline{v}dx\right)\right| =\frac{o(1)}{\la ^{1-\alpha}}.
\end{equation}
Now, for the last term on the left hand side of \eqref{p3}, we have
\begin{equation}\label{p7}
\begin{array}{ll}
\displaystyle
\left|\Re\left(\mathfrak{C}\int_0^{\ell_2}\overline{v}\int_{\mathbb{R}}\mu(\xi)\varphi(x,\xi)d\xi \right)\right|\leq \mathfrak{C}\int_0^{\ell_2} \overline{v}\left(\int_{\R}\frac{\abs{\mu(\xi)}\sqrt{\xi^2+\eta}}{\sqrt{\xi^2+\eta}}\varphi(x,\xi)d\xi\right)^2 dx \\
\displaystyle
\leq \mathfrak{C} \mathfrak{J}(\alpha,\eta) \left(\int_0^{\ell_2}|v|^2dx\right)^{\frac{1}{2}} \left( \int_0^{\ell_2}\int_{\R}(\xi^2+\eta)\abs{\varphi(x,\xi)}^2d\xi dx\right)^{\frac{1}{2}}=\frac{o(1)}{\la^{\frac{3}{2}-\frac{\alpha}{2}}},
\end{array}
\end{equation}
where $\displaystyle\mathfrak{J}(\alpha,\eta)=\left(\int_{\R}\frac{\abs{\xi}^{2\alpha-1}}{\abs{\xi}^2+\eta}d\xi\right)^{\frac{1}{2}}$  which is well defined since $0<\alpha<1$.  \\
Using \eqref{pol3}$_1$, we get
$$
\|z_{xx}\|_{L^2(-\ell_1,0)} \leq \|\la z\|_{L^2(-\ell_1,0)}+\la^{-1+\alpha}\|f^1\|_{L^2(-\ell_1,0)}\leq  \frac{o(1)}{\la^{-\frac{1}{2}-\frac{\alpha}{2}}}.
$$
 Gagliardo-Nirenberg inequality with the above equation implies that
\begin{equation}\label{z_x0}
\begin{array}{ll}
|z_x(0)|\leq \|z_{xx}\|^{\frac{1}{2}}_{L^2(-\ell_1,0)} \|z_{x}\|^{\frac{1}{2}}_{L^2(-\ell_1,0)}+ \|z_{x}\|_{L^2(-\ell_1, 0)}
=\frac{o(1)}{\la^{-\frac{\alpha}{2}}}.
\end{array}
\end{equation}
From the transmission conditions we can deduce that 
\begin{equation}\label{v_x(0)}
|v_x(0)|=\frac{o(1)}{\la^{-\frac{\alpha}{2}}}\quad\text{and}\quad|p_x(0)|=\frac{o(1)}{\la^{-\frac{\alpha}{2}}}.
\end{equation}
Using Lemma \ref{lemma1 pol},  we obtain that $|z(0)|\leq \|z_{x}\|_{L^2(0,\ell_2)}=\frac{o(1)}{\la^{\frac{1-\alpha}{2}}}$.  Using this result and the transmission condition we get
\begin{equation*}
|v(0)|\leq \frac{1}{|\la|}|\textsf{V}(0)|+\frac{1}{\la ^{2-\alpha}}|f^2(0)|\leq \frac{1}{|\la|}|z(0)|+\frac{1}{\la ^{2-\alpha}}|f^2(0)| =\frac{o(1)}{\la^{\min(\frac{3}{2}-\frac{\alpha}{2},2-\alpha)}}=\frac{o(1)}{\la^{\frac{3}{2}-\frac{\alpha}{2}}}.
\end{equation*}
Thus,  from the above equation and \eqref{v_x(0)}, we obtain 
\begin{equation}\label{v_xv}
|\Re\left(\chi_1 v_x(0)\overline{v}(0)\right)|=\frac{o(1)}{\la^{\frac{3}{2}-\alpha}}
\end{equation}
Therefore, using \eqref{p4}-\eqref{p7} and \eqref{v_xv} in \eqref{p3}, we obtain 
$$\int_{0}^{\ell_2} |v_x|^2dx=o(1).$$
Next, multiply \eqref{p1} by $\overline{p}$ and integrate over $(0,\ell_2)$, we obtain
\begin{equation}\label{p8}
\begin{array}{ll}
\displaystyle
\Re\left(\rho\int_0^{\ell_2}\la v\la \overline{p}dx\right)-\Re\left(\chi_1\int_0^{\ell_2}v_x\overline{p_x}dx\right)-\Re\left(\chi_1 v_x(0)\overline{p}(0)\right)-\gamma\mu \int_0^{\ell_2}|\la p|^2dx\\
\displaystyle
-\Re\left(\mathfrak{C}\int_0^{\ell_2}\overline{p}\int_{\mathbb{R}}\mu(\xi)\varphi(x,\xi)d\xi \right)=\Re\left( \int_0^{\ell_2}\mathcal{G}^1\overline{p}dx\right)+\Re\left(\gamma\int_0^{\ell_2} \mathcal{G}^2\overline{p}dx\right).
\end{array}
\end{equation}
The terms of the above equation can be estimated by using similar calculations as above and using the fact that $(\la p)$ and $(p_x)$ are uniformly bounded in $L^2(0,\ell_2)$, Lemma \ref{lemma1 pol}, the first limit in \eqref{lemma2 pol},  and \eqref{v_x(0)}, we obtain that
\begin{equation}\label{p9}
\left\{\begin{array}{lll}
\displaystyle
\left|\Re\left(\gamma\mu \int_0^{\ell_2}\la p \la \overline{v}dx\right)\right|=o(1),\quad \left|\Re\left(\chi_1\int_0^{\ell_2}v_x\overline{p_x}dx\right)\right|=o(1),\\[0.2in]
\displaystyle
\left|\Re\left( \int_0^{\ell_2}\mathcal{G}^1\overline{p}dx\right)\right| 
=\frac{o(1)}{\la ^{1-\alpha}}, \quad \left|\Re\left(\gamma\int_0^{\ell_2} \mathcal{G}^2_n\overline{p}dx\right)\right| =\frac{o(1)}{\la ^{1-\alpha}},\\[0.2in]
\displaystyle
\text{and}\quad\left|\Re\left(\mathfrak{C}\int_0^{\ell_2}\overline{p}\int_{\mathbb{R}}\mu(\xi)\varphi(x,\xi)d\xi \right)\right|=\frac{o(1)}{\la^{\frac{3}{2}-\frac{\alpha}{2}}}.
\end{array}\right.
\end{equation}
By using Gagliardo-Nirenberg we have that $|p(0)|\leq \|p_x\|_{L^2(0,\ell_2)}^{\frac{1}{2}}\|p\|_{L^2(0,\ell_2)}^{\frac{1}{2}}+\|p\|_{L^2(0,\ell_2)}=\frac{O(1)}{\lambda^{\frac{1}{2}}}$. From this result and \eqref{v_x(0)}, we get
\begin{equation}\label{p_x0}
|\Re\left(\chi_1 v_x(0)\overline{p}(0)\right)|=\frac{o(1)}{\la^{\frac{1-\alpha}{2}}}.
\end{equation}
Thus, using \eqref{p9} and \eqref{p_x0} in \eqref{p8} leads to
\begin{equation}\label{p10}
\int_0^{\ell_2}|\la p|^2dx=o(1).
\end{equation}
From equation \eqref{pol2}$_4$, and by using that $\|F\|_{\mathcal{H}}=o(1)$, we get 
\begin{equation}
\int_0^{\ell_2}| \textsf{P}|^2dx\leq 2\int_0^{\ell_2}|\la p|^2dx+\frac{2}{\la^{2(1-\alpha)}}\int_0^{\ell_2}| f^4|^2dx=o(1).
\end{equation}
Thus, the second limit in \eqref{Pol Est2} is proved.  \\
Finally,  multiply \eqref{p2} by $\overline{p}$ and integrate over $(0,\ell_2)$, we get
\begin{equation}\label{e9}
\begin{array}{ll}
\displaystyle
\mu\chi \int_0^{\ell_2}|\la p|^2dx-\chi_1\beta\int_0^{\ell_2}|p_x|^2dx+\Re\left(\chi_1\beta p_x(0)p(0)\right)+\Re\left(\rho\beta\gamma\int_0^{\ell_2} \la v\la \overline{p}dx\right)\\
\displaystyle
-\Re\left(\gamma\beta\mathfrak{C}\int_0^{\ell_2}\overline{p}\int_{\mathbb{R}}\mu(\xi)\varphi(x,\xi)d\xi \right)=\Re\left( \gamma\beta
\int_0^{\ell_2}\mathcal{G}^1\overline{p}dx\right)+\Re\left(\gamma\int_0^{\ell_2} \mathcal{G}^2\overline{p}dx\right).
\end{array}
\end{equation}
By using similar calculation as for the second limit in this Lemma, we can conclude that 
the last limit in \eqref{Pol Est2} is proved.
\end{proof}

\noindent \textbf{Proof of Theorem \ref{polynomial theorem}.} 
From Lemmas \ref{lemma1 pol} and \ref{lemma2 pol} we get that $\|U\|_{\mathcal{H}}=o(1)$, which contradicts \eqref{pol1}. Consequently, condition \eqref{R2} holds. This implies,  that the energy decay estimation \eqref{Energypol} holds.  The proof is thus complete.

\begin{rem}\normalfont
		\leavevmode
	\begin{enumerate}
		\item
		In the absence of fractional derivatives, the system described in this paper exhibits polynomial stability, as demonstrated through the analysis presented. Conversely, Akil (2022) \cite{Akil2022} investigated a similar system involving the coupling of piezoelectric beams with heat acting on the whole domain, where he achieved exponential stability. This contrast implies that when fractional derivatives are not present, for the heat acting on the whole doamin as explored in Akil et al.'s study, offers enhanced stability compared to considering the heat through transmission.
		\item Another comparison can be drawn with the findings in An et al. (2022) \cite{an2022stability}, where the inclusion of boundary fractional derivatives led to a lack of exponential stability and instead resulted in polynomial stability of order $t^{1-\alpha}$, particularly with two boundary dampings. In our system, we similarly introduce fractional damping affecting longitudinal displacement and obtain the same polynomial stability result, even in the absence of heat conduction. However, upon incorporating thermal effects described by Fourier’s law, An et al. achieved exponential stability, contrasting with our findings here. This suggests that in systems involving fractional derivatives, considering two damping on the boundary and and the heat to be acting on the whole domain achieve a better decay rate.
\item In \cite{AkilOzer2024} the authors considered a transmission problem of piezoelectric-elastic system (noted as P/E) with a local damping acting on the elastic part only.  They proved the strong stability under the condition
		$$\frac{\sigma_+}{\sigma_-}\neq \frac{2n_+-1}{2n_{-}-1}, \quad \forall \, n_+, n_-\in \mathbb{N}$$
		where the two positive real numbers $\sigma_+$ and $\sigma_-$ are defined by
		$$\sigma_{\pm}:=\sqrt{\frac{(\rho\beta+\mu\alpha)\pm \sqrt{(\rho\beta-\mu\alpha)^2+4\gamma^2\beta^2\mu \rho}}{2\beta \alpha_1}},$$
		(where $\alpha, \alpha_1$ are $\chi, \chi_1$ in our work). They showed that the stability is polynomial or exponential,  dependening entirely upon the arithmetic nature of a quotient involving all physical parameters and represented by the following two conditions (see \cite{AkilOzer2024}).\\
		$\mathbf{(H_{EXP})}$ Assume that $\frac{\sigma_+}{\sigma_-}\in \mathbb{Q}$ is such that
		$\frac{\sigma_+}{\sigma_-}=\frac{\xi_+}{\xi_-}$ where  $\gcd(\xi_+, \xi_-) = 1$, and $\xi_+, \xi_-$ are even and odd 
		integers, respectively, or the other way around.\\
		$\mathbf{(H_{POL})}$ Assume that	$\frac{\sigma_+}{\sigma_-}$  is an irrational number. Then, suppose that there exists $w(\frac{\sigma_+}{\sigma_-} )\geq 2$, depending on 	$\frac{\sigma_+}{\sigma_-}$ such that for all sequences $\Gamma=(\xi_{1,n},\xi_{2,n})_{n\in \mathbb{N}}\in (\mathbb{N}\times\mathbb{N}^{\ast})^{\mathbb{N}}$ with $\xi_{1,n}\sim \xi_{2,n}$ for sufficiently large $n$, there exists a positive constant $c(\frac{\sigma_+}{\sigma_-},\Gamma)$ and a positive integer $N(\frac{\sigma_+}{\sigma_-},\Gamma)$, depending on $\frac{\sigma_+}{\sigma_-}$ and the sequence $\Gamma$ such that
		$$\left|\frac{\sigma_+}{\sigma_-}-\frac{\xi_{1,n}}{\xi_{2,n}}\right|>\frac{c(\frac{\sigma_+}{\sigma_-},\Gamma)}{\xi_{2,n}^{w(\frac{\sigma_+}{\sigma_-})}}, \forall n\geq N(\frac{\sigma_+}{\sigma_-},\Gamma).$$
		If condition $\mathbf{(H_{EXP})}$ is satisfied, the system is exponentially stable. When condition $\mathbf{(H_{POL})}$ is met, the system exhibits polynomial stability with a decay rate of type $t^{-\frac{2}{(4w\left(\frac{\sigma_+}{\sigma_-}\right) - 4)}}$. It is noteworthy that if the elastic part is replaced by heat without any damping, resulting in a piezoelectric-heat transmission problem, the system loses its exponential stability. Instead, it becomes polynomially stable with an energy decay rate of type $t^{-1}$ under condition $\mathbf{(H_{EXP})}$ and $t^{-\frac{2}{(4w\left(\frac{\sigma_+}{\sigma_-}\right) - 6)}}$ when condition $(\mathbf{(H_{POL})})$ is satisfied. A proof similar to that in \cite{AkilOzer2024} can employed to establish these decay rates.

	\end{enumerate}

\end{rem}


\end{document}